\theoremstyle{plain}
\newtheorem{theorem}{Theorem}[section]
\newtheorem{prop}[theorem]{Proposition}
\newtheorem{lemma}[theorem]{Lemma}
\newtheorem{corollary}[theorem]{Corollary}
\newtheorem{mainresult}[theorem]{Main Result}
\theoremstyle{definition}
\newtheorem{definition}[theorem]{Definition}
\newtheorem{remark}[theorem]{Remark}
\def\RR{\mathbb R}
\def\CC{\mathbb C}
\def\la{\lambda}
\def\11{1\!\!1}
\def\SLi{\sqrt{L_1}}
\def\SLii{\sqrt{L_2}}
\def\l{\lambda}
\newcommand\D{\mathcal{D}}
\DeclareMathOperator{\support}{supp}
\title[Marcinkiewicz-type spectral multipliers]
{Marcinkiewicz-type spectral multipliers on \\
Hardy and Lebesgue spaces on product spaces of homogeneous
type}
\author{Peng Chen, Xuan Thinh Duong, Ji Li, Lesley A.~Ward and Lixin Yan }
\address{
    Peng Chen,
    Department of Mathematics,
    Sun Yat-sen University,
    Guangzhou, 510275,
    P.R.~China}
\email{
    achenpeng1981@163.com}
\address{
    Xuan Thinh Duong,
    Department of Mathematics,
    Macquarie University, NSW 2109,
    Australia}
\email{
    xuan.duong@mq.edu.au}
\address{
    Ji Li,
    Department of Mathematics,
    Macquarie University, NSW 2109,
    Australia
    }
\email{
    ji.li@mq.edu.au}
\address{
    Lesley A.~Ward,
    School of Information Technology and Mathematical Sciences,
    University of South Australia,
    Mawson Lakes SA 5095,
    Australia}
\email{
    lesley.ward@unisa.edu.au}
\address{
    Lixin Yan,
    Department of Mathematics,
    Sun Yat-sen University,
    Guangzhou, 510275,
    P.R.~China}
\email{
    mcsylx@mail.sysu.edu.cn}
\subjclass[2010]{Primary: 42B15; Secondary: 42B30, 47B15, 35P05.}
\keywords{Marcinkiewicz-type spectral multipliers, Hardy
    spaces, nonnegative self-adjoint operators, heat semigroup,
    Stein--Tomas type estimates, restriction type estimates,
    finite propagation speed property, spaces of homogeneous
    type.}
\date{\today}
\begin{document}

\begin{abstract}
    Let $X_1$ and $X_2$ be metric spaces equipped with doubling
    measures and let $L_1$ and $L_2$ be nonnegative
    self-adjoint second-order operators acting on $L^2(X_1)$
    and $L^2(X_2)$ respectively. We study multivariable
    spectral multipliers $F(L_1, L_2)$ acting on the Cartesian
    product of $X_1$ and $X_2$.
    Under the assumptions of the finite propagation speed
    property and Plancherel or Stein--Tomas restriction type
    estimates on the operators $L_1$ and~$L_2$, we show that if
    a function~$F$ satisfies a Marcinkiewicz-type differential
    condition then the spectral multiplier operator $F(L_1, L_2)$ is
    bounded from appropriate Hardy spaces to Lebesgue spaces on
    the product space $X_1\times X_2$. We apply our results to
    the analysis of second-order elliptic operators in the
    product setting, specifically Riesz-transform-like
    operators and double Bochner--Riesz means.
\end{abstract}

\maketitle

\tableofcontents

\section{Introduction}\label{sec:intro}
\setcounter{equation}{0}

Our goal in this paper is to build a theory of
Marcinkiewicz-type spectral multipliers in the general setting
of spaces of homogeneous type and self-adjoint operators. Our
key assumptions on the operators are the finite propagation
speed property and restriction type estimates. We work in the
multivariable setting and we focus on the boundedness of
spectral multipliers acting on Lebesgue spaces and on Hardy
spaces associated to the self-adjoint operators.  In this
section we review these ideas, present our results, and put
them in context.


%

Let us first explain the definition of multivariable spectral
multipliers. Suppose that $X_1\times X_2$ is the Cartesian
product of measure spaces $X_1$ and $X_2$ and that $L_1$ and
$L_2$ are two nonnegative self-adjoint operators acting on the
spaces $L^2(X_1)$ and $L^2(X_2)$, respectively. There is a
unique spectral decomposition~$E$ such that for all Borel
subsets $A\subset {\mathbb R}^2$, $E(A)$ is a projection on
$L^2({X_1\times X_2})$ and such that  for all Borel subsets
$A_i\subset {\mathbb R}$, $i = 1$, 2, one has $ E(A_1\times
A_2) = E_{L_1}(A_1)\otimes E_{L_2}(A_2). $ Hence for a bounded
Borel function $F: {\mathbb R}^2\rightarrow {\mathbb C}$, one
may define the spectral multiplier operator $F(L_1, L_2)$
acting on the space $L^2({X_1\times X_2})$ by the formula
\begin{eqnarray}\label{e1.1}
    F(L_1, L_2)
    := \int_{{\mathbb R}^2} F(\lambda_1, \lambda_2) \,
        dE(\lambda_1, \lambda_2).
\end{eqnarray}
Clearly $F(L_1, L_2)$ is bounded on $L^2({X_1\times X_2})$.

A natural question that arises is to find sufficient conditions
on the multiplier function~$F$ guaranteeing that the operator
$F(L_1, L_2)$ is bounded on $L^p(X_1\times X_2)$, for $1 < p <
\infty$. A similar question can also be asked for $F(L_1, L_2)$
acting on appropriate Hardy spaces. The desired conditions are
variants of those to be found in the multiplier theorems of
Marcinkiewicz, Mikhlin, and H\"ormander; see for
instance~\cite{COSY, Ch2, DOS, FS, GH, GS, Ho, KU, Mar, MRS1,MRS2, Ou, St1,
St2}.

The following brief summary outlines three areas of progress in
this direction.

{\textbf {i)}} One early result along these lines, due to Gundy
and Stein in 1979 (see~\cite{GS}), concerns the operator
$F(\Delta_1, \Delta_2)$ where $F: {\mathbb
R^2}\rightarrow{\mathbb C}$ is a given bounded function and
$\Delta_i=-\sum_{k=1}^{n_i} \partial_{x_k}^2$ is the standard
Laplace operator on the Euclidean space ${\mathbb R}^{n_i}$,
for $i=1$,~2. This operator $F(\Delta_1, \Delta_2)$ is
initially defined by Fourier analysis on $L^2({\mathbb
R}^{n_1}\times {\mathbb R}^{n_2})$, and extends to a bounded
operator on $L^p({\mathbb R}^{n_1}\times {\mathbb R}^{n_2})$
for all $p\in (1, \infty)$ provided the function $F$ satisfies
\begin{eqnarray}\label{Marcinkiewcz}
    \left| \partial_{\lambda_2}^{\beta}
        \partial_{\lambda_1}^{\alpha} F(\lambda_1, \lambda_2) \right|
    \lesssim \lambda_1^{-\alpha}\lambda_2^{-\beta}
        \  \ {\text{for all }} \alpha\leq \alpha_0 {\text{ and }} \beta\leq\beta_0,
\end{eqnarray}
for some sufficiently large $\alpha_0$ and $\beta_0$. The proof
is based on the idea of pointwise majorization of $F(\Delta_1,
\Delta_2)$ by the Littlewood-Paley product $g$ and
$g_{\lambda}$ functions. One cannot obtain ($H^1$, $L^1$)
boundedness without requiring the function~$F$ to have a
considerable degree of smoothness.

The complicated structure of Hardy spaces and BMO on product
spaces is illustrated by the counterexample of Carleson
\cite{C}, which disproved the rectangle atomic decomposition
conjectures about those spaces. Nevertheless, due to the
boundedness criterion established by R. Fefferman \cite{F} and
by Journ\'e~\cite{J2}, in order to prove the ($H^1$, $L^1$)
estimates for $L^2$-bounded linear operators it suffices to
work on the rectangle atoms (we explain the details in
Sections~3 and 5). Using this criterion L.K.~Chen~\cite{Ch}
proved several results on multiplier operators, one of which
asserts that if
\[
    \left| \partial_{\lambda_2}^{\beta} \partial_{\lambda_1}^{\alpha} F(\lambda_1, \lambda_2)  \right|
    \lesssim \lambda_1^{-\alpha}\lambda_2^{-\beta}  \  \ \ \ {\rm for   \  } \alpha
    \leq \left[\frac{n_1}{2}\right]+1, \ \beta\leq \left[\frac{n_2}{2}\right]+1,
\]
then $F(\Delta_1, \Delta_2)$ is bounded on $L^p({\mathbb
R}^{n_1}\times {\mathbb R}^{n_2})$ for all $p\in (1, \infty)$.

{\textbf {ii)}}  For multivariable spectral multipliers $F(L_1,
L_2)$, Sikora proved a multiplier theorem (see \cite[Theorem
2.1]{Si1}) under the assumption that the integral kernels
$p_t(x_i,y_i)$, $i = 1$, 2, of the semigroups~$e^{-tL_i}$
satisfy Gaussian estimates.
He showed that if $F: {\mathbb R}^2\rightarrow {\mathbb C}$ is
a continuous function satisfying the following estimate on its
Sobolev norms:
\begin{eqnarray}\label{e1.2}\hspace{1cm}
    \sup_{t>0} \| \eta(\lambda_1 + \lambda_2)
        F(t\lambda_1, t\lambda_2)\|_{W^{s, \infty}(\mathbb R^2)}
    < \infty\ \ \ \ \ {\text{ for some } }s>(n_1+n_2)/2,
\end{eqnarray}
then the operator $F(L_1, L_2)$ is of weak type $(1,1)$, and
hence by interpolation and duality, $F(L_1, L_2)$ is bounded on
$L^p( {X_1\times X_2} ) $ for all $1<p<\infty$. Here $X_i$, $i
= 1$, 2, is a metric space equipped with a doubling measure,
$n_i$ is the exponent in the doubling condition
(see~\eqref{doubling} below) and $\eta\in
C_0^{\infty}(0,\infty)$ is a nonzero auxiliary cut-off
function.

Note that the condition~\eqref{e1.2} on $F$ is similar to the
condition in the H\"ormander--Mikhlin Fourier multiplier result
(see for example~\cite{Ho, St1, St2}). It was noted on page~322
of~\cite{Si1} that it would be interesting to obtain a version
of~\cite[Theorem 2.1]{Si1} with condition \eqref{e1.2} replaced
by condition~\eqref{Marcinkiewcz}.
This problem was addressed in \cite{DLY}, as explained in (iii)
below. In this paper, we will obtain results in this direction
for multivariable spectral multipliers in general settings.

{\textbf {iii)}} \  In \cite{DLY}, the second, third and fifth
authors of this paper addressed Marcinkiewicz-type multipliers
on Hardy and Lebesgue spaces on ${\mathbb R^{n_1}\times
{\mathbb R^{n_2}}}$. More precisely, they considered
nonnegative self-adjoint operators $L_i$ on $L^2({\mathbb
R}^{n_i})$, $i = 1$, 2, satisfying Gaussian estimates. One can
define a class of product Hardy spaces $H_{L_1, L_2}^1(
{{\mathbb R}^{n_1}\times {\mathbb R}^{n_2}})$ associated with
such operators $L_1, L_2$ (see \cite{DLY}). Suppose the
following mixed Sobolev norm on functions is defined on
${\mathbb R}\times {\mathbb R}:$
\begin{eqnarray}\label{e1.4}
    \|F\|_{W^{(s_1,s_2), 2}(\mathbb R\times {\mathbb R})}
    := \left\|\left( I-\frac{\partial^2}{\partial \lambda^2_1}\right)^{s_1/2}
    \left( I-\frac{\partial^2}{\partial \lambda^2_2}\right)^{s_2/2}
        F(\lambda_1, \lambda_2)\right\|_{ L^{2}(\mathbb R^2)}.
\end{eqnarray}
With this norm, if
\begin{eqnarray}\label{e1.7}
    \sup_{t_1, t_2>0}
        \left\|\eta(\lambda_1)\eta(\lambda_2)
        F(t_1\lambda_1, t_2\lambda_2) \right\|_{W^{(s_1,s_2), 2}
            (\mathbb R\times {\mathbb R})}
    < \infty,
\end{eqnarray}
\begin{eqnarray}\label{e1.5}
    \sup_{t_1>0}\left\|\eta(\lambda_1)  F(t_1\lambda_1, \lambda_2)
    \right\|_{W^{(s_1,s_2), 2}(\mathbb R\times {\mathbb R})}
    < \infty,
\end{eqnarray}
and
\begin{eqnarray}\label{e1.6}
    \sup_{t_2>0}\left\| \eta(\lambda_2) F( \lambda_1, t_2\lambda_2)
    \right\|_{W^{(s_1,s_2), 2}(\mathbb R\times {\mathbb R})}
    < \infty
\end{eqnarray}
for some $s_1 > (n_1 + 1)/2, s_2 > (n_2 + 1)/2$, then the
operator $F( L_1, L_2)$ extends to a bounded operator from
$H_{L_1, L_2}^1( {{\mathbb R}^{n_1}\times {\mathbb R}^{n_2}} )$
to $L^1( {{\mathbb R}^{n_1}\times {\mathbb R}^{n_2}} )$, and
hence by interpolation and duality, $F(L_1,L_2)$ is bounded on
$L^p({{\mathbb R}^{n_1}\times {\mathbb R}^{n_2}})$ for all $1 <
p < \infty$.

In this paper we continue the line of research described above.
We build a theory of Marcinkiewicz-type multipliers, in the
sense of condition~\eqref{e1.7}, which applies in a rather
general setting of self-adjoint operators and metric measure
spaces satisfying the doubling condition~\eqref{doubling}. In
particular, these spaces are of homogeneous type. We reduce the
number of derivatives required from $s_i > (n_i + 1)/2$ to $s_i
> n_i/2$, by replacing the assumption of Gaussian estimates by
the finite propagation speed property (see
Subsection~\ref{sec:finitespeed}) and Plancherel or restriction
type estimates (see Subsection~\ref{sec:restriction}).

In the strategy adopted in this work, we observe that
Plancherel or restriction type estimates play a crucial role.
Restriction type estimates originate from classical Fourier
analysis where one considers the so-called restriction problem:
\emph{describe all pairs of exponents $(p,2)$ such that the
restriction operator
\begin{eqnarray*}
    R_\lambda(f)(\omega)
    = \widehat{f}(\lambda \omega)
\end{eqnarray*}
is bounded from $L^p({\mathbb R}^n) \to L^2({\bf S}^{n-1})$.}
Here $\widehat{f}$ is the Fourier transform of $f$ and
$\omega\in {\bf S}^{n-1}$ is a point on the unit sphere. The
classical Stein--Tomas restriction theorem establishes the
$L^p\to L^2$ boundedness of the restriction operator
$R_\lambda$ for $1\leq p\leq (2n+3)/(n+1)$; see~\cite{St2}.

The restriction operator $R_\lambda$ is fundamentally linked to
the standard Laplacian~$\Delta = \sum_{i = 1}^n
\partial_{x_i}^2$ on~$\RR^n$, as follows. Let $d
E_{\sqrt{-\Delta}}(\lambda)$ be the spectral measure of
$\sqrt{-\Delta}$.
A short calculation shows that
\begin{eqnarray}\label{e1.8}
    d E_{\sqrt{-\Delta}}(\lambda)
    = \frac{\lambda^{n-1}}{(2\pi)^n} R_\lambda^*R_\lambda,
\end{eqnarray}
(compare \cite{GHS}). %
%
By a standard $T^*T$ argument, $ d E_{\sqrt{-\Delta}}(\lambda)$
is bounded from $L^p({\mathbb R}^n)$ to $L^{p'}({\mathbb
R}^n)$, where $p'$ is the conjugate exponent of $p$ ($1/p +
1/p' = 1$). In other words, the Stein--Tomas restriction
estimates on the $L^p\to L^2$ norm of $R_\lambda$ can be
expressed purely in terms of spectral resolutions of the
self-adjoint operator $-\Delta$.

Motivated by these considerations for the standard Laplace
operator~$-\Delta$ on~$\mathbb{R}^n$, \emph{restriction type
estimates} ${\rm (ST^{2 }_{p, 2})}$ (described below) for
general self-adjoint operators~$L$ on metric measure spaces~$X$
were introduced in~\cite{COSY}. An important step in obtaining
the boundedness of spectral multiplier operators is to prove
restriction type estimates~${\rm (ST^{2 }_{p, 2})}$. Some
results in this direction were obtained in~\cite{COSY, DOS,
GHS}.


In preparation for stating our main results, we describe our
setting for this paper. We consider two nonnegative
self-adjoint operators $L_i$, $i = 1$, 2, on $L^2(X_i)$ where
$(X_i, d_i, \mu_i)$ is a metric measure space, $d_i$ is a
metric and the measure $\mu_i$ satisfies the \emph{doubling
condition}: there exist positive constants $C$, $n_i$ such that
\begin{eqnarray}\label{doubling}
    V_i(x_i, \lambda r)
    \le C \lambda^{n_i} V_i(x_i,   r)\,\;
    {\text{for all}}\, x_i \in X_i,\, \lambda \ge 1, \, r > 0.
\end{eqnarray}
Here $V_i(x_i,r)$ denotes the volume of the open ball
$B_i(x_i,r)$ of center $x_i$ and radius $r$.
In particular, $X_i$ is a space of homogeneous type, as defined
by Coifman and Weiss~\cite[Chapter~III]{CW}. For simplicity,
however, in this paper we assume that $d_i$ is a metric, not
just a quasimetric.

In addition, we assume that the operators $L_i$, $i = 1$, 2,
satisfy the finite propagation speed property for the
corresponding wave equation. With all these assumptions in
place, one can define a class of product Hardy spaces $H_{L_1,
L_2}^p(X_1\times X_2)$, $1\leq p\leq 2$, associated with the
operators $L_1$ and $L_2$ (see~\cite{CDLWY}).

Following~\cite{COSY}, we say that the operators $L_i$, $i =
1$, 2, satisfy the \emph{Stein--Tomas restriction type
estimates} $({\rm{ST}}_{p,2}^q)$ if for every $R > 0$ and all
Borel functions $F$ such that $\support F\subseteq [0,R]$,
\[
    \|F(\sqrt{L_i})P_{B_i(x_i,r)}\|_{L^p(X_i)\to L^2(X_i)}
    \leq CV_i(x_i,r)^{-(1/p-1/2)}(Rr)^{n_i(1/p-1/2)}
        \|F(R\lambda)\|_{L^q({\mathbb R})}
    \leqno{({\rm{ST}}_{p,2}^q)}
\]
for all $x_i\in X_i$ and all $r\geq 1/R$.

We note that if the volume is polynomial, in other words, if
$V_i(x_i,r) \sim r^{n_i}$, then ${({\rm ST}^2_{p, 2})}$ is
equivalent to the $(p,2)$ restriction estimate of Stein--Tomas
(see Section~2 below).

Our main results on spectral multipliers are as follows. (They
are restated as Theorems~\ref{th3.1} and~\ref{th3.2} below.)
They involve restriction type estimates of the
form~$({\rm{ST}}_{p_i,2}^2)$ and~$({\rm{ST}}_{p_i,2}^\infty)$
respectively, along with corresponding Sobolev conditions.

\begin{mainresult}\label{mainresult1}
    Let $(X_1,d_1,\mu_1)$ and $(X_2,d_2,\mu_2)$ be metric
    measure spaces satisfying the doubling
    condition~\eqref{doubling} with exponents~$n_1$, $n_2$
    respectively. Let $L_1$ and $L_2$ be nonnegative
    self-adjoint operators acting on $L^2(X_1)$ and $L^2(X_2)$
    respectively. Suppose $L_1$ and $L_2$ satisfy the finite
    propagation speed property as well as restriction type estimates
    $({\rm{ST}}_{p_1,2}^2)$ and~$({\rm{ST}}_{p_2,2}^2)$
    respectively, for some $p_1$, $p_2$ with $1\leq p_1 < 2$,
    $1\leq p_2 < 2$. Suppose $s_1 > n_1/2$ and $s_2 > n_2/2$.
    Let $F$ be a bounded Borel function satisfying the
    following Sobolev conditions with respect to the $L^2$
    norm: the mixed Sobolev condition
    \begin{eqnarray}\label{e1.11}
        \sup_{t_1, t_2>0}
            \left\|\eta(\lambda_1)\eta(\lambda_2)
            F(t_1\lambda_1, t_2\lambda_2) \right\|_{W^{(s_1,s_2),2}
            (\mathbb R\times {\mathbb R})}
        < \infty,
    \end{eqnarray}
    and the two one-variable Sobolev conditions
    \begin{eqnarray}\label{e1.9}
        \sup_{t_1>0} \left\|\eta(\lambda_1)  F(t_1\lambda_1, 0)
            \right\|_{W^{s_1, 2}(\mathbb R)}
        < \infty,
    \end{eqnarray}
    and
    \begin{eqnarray}\label{e1.10}
        \sup_{t_2>0}\left\| \eta(\lambda_2) F(0, t_2\lambda_2)
        \right\|_{W^{s_2, 2}(\mathbb R)}
        < \infty,
    \end{eqnarray}
    where $\eta\in C_0^\infty(0,\infty)$ is a nonzero
    auxiliary cut-off function. Then the operator $F(L_1, L_2)$
    extends to a bounded operator from $H_{L_1,L_2}^1(
    {X_1\times X_2} )$ to $L^1( {X_1\times X_2} )$. Moreover,
    the operator  $F(L_1, L_2)$ is bounded on $L^p(X_1\times
    X_2)$ for all $p\in (p_{\max},p_{\max}')$, where
    $p_{\max} = \max\{p_1,p_2\}$.
\end{mainresult}

\begin{mainresult}\label{mainresult2}
    Let $(X_1,d_1,\mu_1)$ and $(X_2,d_2,\mu_2)$ be metric
    measure spaces satisfying the doubling
    condition~\eqref{doubling} with exponents~$n_1$, $n_2$
    respectively. Let $L_1$ and $L_2$ be nonnegative
    self-adjoint operators acting on $L^2(X_1)$ and $L^2(X_2)$
    respectively. Suppose that $L_1$ and $L_2$ satisfy the
    finite propagation speed property as well as restriction
    type estimates $({\rm{ST}}_{p_1,2}^2)$
    and~$({\rm{ST}}_{p_2,2}^2)$ respectively, for some $p_1$,
    $p_2$ with $1\leq p_1 < 2$, $1\leq p_2 < 2$. Suppose $s_1 >
    n_1/2$ and $s_2 > n_2/2$. Let $F$ be a bounded Borel
    function satisfying the following Sobolev conditions with
    respect to the $L^\infty$ norm: the mixed Sobolev condition
    \begin{eqnarray}\label{e1.14}
        \sup_{t_1, t_2>0}
            \left\|\eta(\lambda_1)\eta(\lambda_2)
            F(t_1\lambda_1, t_2\lambda_2)\right\|_{W^{(s_1,s_2), \infty}
                (\mathbb R\times {\mathbb R})}
        < \infty,
    \end{eqnarray}
    and the two one-variable Sobolev conditions
    \begin{eqnarray}\label{e1.12}
        \sup_{t_1 > 0}
            \left\|\eta(\lambda_1) F(t_1\lambda_1, 0)
                \right\|_{W^{s_1, \infty}(\mathbb R)}
        < \infty,
    \end{eqnarray}
    and
    \begin{eqnarray}\label{e1.13}
        \sup_{t_2 > 0}
            \left\| \eta(\lambda_2)
                F(0, t_2\lambda_2)\right\|_{W^{s_2, \infty}(\mathbb R)}
        < \infty,
    \end{eqnarray}
    where $\eta\in C_0^\infty(0,\infty)$ is a nonzero auxiliary
    cut-off function. Then the operator $F(L_1, L_2)$ extends
    to a bounded operator from $H_{L_1,L_2}^1({X_1\times X_2}
    )$ to $L^1( {X_1\times X_2} )$. Moreover, the operator
    $F(L_1, L_2)$ is bounded on $L^p(X_1\times X_2)$ for all
    $p\in (p_{\max},p_{\max}')$, where
    $p_{\max} = \max\{p_1,p_2\}$.
\end{mainresult}

\begin{remark}
    If the operator $F(L_1, L_2)$ is bounded from
    $H_{L_1,L_2}^1({X_1\times X_2} )$ to $L^1( {X_1\times X_2}
    )$, then by the interpolation theorem (Theorem~\ref{th2.7}
    below), we know that $F(L_1, L_2)$ is bounded from
    $H_{L_1,L_2}^p({X_1\times X_2} )$ to $L^p( {X_1\times X_2}
    )$ not just for $p\in (p_{\max},2]$, but for all $p\in
    (1,2]$. However, Proposition~\ref{prop2.3} below, which
    guarantees that the Hardy spaces $H_{L_1,L_2}^p({X_1\times
    X_2} )$ coincide with $L^p( {X_1\times X_2} )$, relies on
    the assumption that $p\in (p_{\max},2]$. Thus we obtain the
    $L^p$ boundedness only for all $p\in (p_{\max},2]$ and by
    duality for all $p\in (p_{\max},p_{\max}')$.
%
\end{remark}

\begin{remark}
    Note that if $L_i$, $i = 1$, 2, are nonnegative
    self-adjoint operators on $L^2(X_i)$ satisfying Gaussian
    estimates, then estimates ${\rm (FS)}$ and $({\rm  ST}_{1,
    2}^{\infty})$ hold (see Section~2.2 below). Hence for any
    bounded Borel function satisfying conditions \eqref{e1.14},
    \eqref{e1.12} and \eqref{e1.13} for some $s_1> n_1/2$,
    $s_2>n_2/2$, the operator $F( L_1, L_2)$ extends to a
    bounded operator from $H_{L_1, L_2}^1( {X_1\times X_2} )$
    to $L^1( {X_1\times X_2} )$. It follows by interpolation
    and duality that $F( L_1, L_2)$ is bounded on $L^p(
    {X_1\times X_2} ) $ for all $p$ with $1 < p < \infty$. See
    Corollary~\ref{coro3.3} below.
\end{remark}

\begin{remark}\label{rem:removeconditionbrief}
    If we assume in addition that the spectrum of $L_1$ (or
    $L_2$) does not include the point~$0$, or that 0~belongs to
    the absolutely continuous spectrum of $L_1$ (or $L_2$),
    then the conclusions of Main Results~1.1 and~1.2 still hold
    without assuming the one-variable Sobolev
    conditions~\eqref{e1.10} (or \eqref{e1.9})
    and~\eqref{e1.13} (or~\eqref{e1.12}). For more detail see
    Remark~\ref{rem:removecondition} at the end of
    Section~\ref{sec:proofofmainthms}. For example, when $L_1$
    and $L_2$ are standard Laplacians on Euclidean spaces, the
    mixed Sobolev condition~\eqref{e1.11} alone on~$F$ is
    enough to yield the $L^p$-boundedness of $F(L_1,L_2)$ for
    all $1<p<\infty$. We note that in this setting of
    Laplacians on Euclidean spaces, under the same
    condition~\eqref{e1.11} Carbery and Seeger proved the $H^1
    \to H^1$ boundedness of the multiplier, using a different
    approach (see~\cite[Corollary~5.2]{CaSe}).
\end{remark}

\begin{remark}
    We note that our methods could be extended to establish
    $k$-parameter analogues of our results, for the spectral
    multiplier~$F(L_1,\ldots , L_k)$ with $k \geq 3$.
\end{remark}

Our main contribution in this paper is to draw together three
disparate components in order to establish the desired
boundedness results for the Marcinkiewicz-type spectral
multiplier~$F(L_1, L_2)$ on product spaces. These three
components are: (a)~Hardy spaces $H^1_{L_1,L_2}(X_1\times X_2)$
associated to operators $L_1$, $L_2$, and their atomic
decomposition, (b)~two-parameter restriction type estimates,
and their use in proving off-diagonal estimates, and (c)~the
use of Journ\'e's Lemma in the proofs of our main results.

We should mention that unlike the one-parameter case, under the
hypotheses of Main Result~1.1 or~1.2 we cannot expect the
operator $F(L_1,L_2)$ in the setting of $X_1 \times X_2$ to
satisfy the weak-type $(1,1)$ estimate, since, as shown
in~\cite{DLY}, even in the setting of Euclidean
spaces~$\mathbb{R}^{n_1}\times\mathbb{R}^{n_2}$ the
weak-type~$(1,1)$ estimate does not hold for $F(L_1,L_2)$. That
is, there does not exist a uniform constant $C$ such that for
every $f\in L^1(\mathbb{R}^{n_1}\times\mathbb{R}^{n_2})$, the
endpoint estimate
\[
    \big| \big\{(x_1, x_2)\in \mathbb{R}^{n_1}\times\mathbb{R}^{n_2}:
        |Tf(x)| > \alpha\big\}\big|
    \leq \frac{C}{\alpha}\|f\|_{L^1(\mathbb{R}^{n_1}\times\mathbb{R}^{n_2})},
    \ \ \ {\text {for all}}\, \alpha > 0
\]
holds. From the point of view of interpolation theory, Main
Results~1.1 and~1.2 show that the Calder\'on--Zygmund theory on
product domains shifts the focus of attention from the `weak'
$L^1$ theory to the `strong' $(H^1, L^1)$-theory (see for
example \cite{BLYZ, CYZ, CF1, CF2, DLY, F}). The recently developed atomic
decomposition for the Hardy spaces $H^1_{L_1,L_2}(X_1\times
X_2)$ (see Section~3.1 below and~\cite{CDLWY}) plays a key role
in the proofs of our main results in
Section~\ref{sec:proofofmainthms}.

A novel aspect of this paper is the two-parameter restriction
type estimates obtained in Lemmas~\ref{le4.1} and~\ref{le4.2};
to our knowledge these have not appeared in the literature
before. They play a crucial role in the chain of implications
from the one-parameter restriction type estimates $({\rm
ST}^{2}_{p, 2})$ and~$({\rm ST}^{\infty}_{p, 2})$ to our
off-diagonal estimates.

In the multiparameter setting, we make use of Journ\'e's
Lemma~\cite{J1}, which acts as a multiparameter substitute for
the usual one-parameter covering lemmas.

We note two important ways in which our paper is not simply a
straightforward generalization of~\cite{DLY}. First, here we
prove the boundedness of the spectral multiplier assuming the
existence of fewer derivatives ($n_i/2$ instead of $(n_i +
1)/2$ derivatives). Second, for a large class of operators, we
show that the only condition required on the bounded Borel
function~$F$ is the mixed Sobolev condition~\eqref{e1.14} (as
discussed in Remarks~\ref{rem:removeconditionbrief}
and~\ref{rem:removecondition}). Each of these points requires
numerous innovations in the proofs.

\medskip

The layout of the paper is as follows. In
Section~\ref{sec:notationprelim} we give our notation and
recall some basic properties of heat kernels, the notion of the
finite propagation speed property for the wave equation, and
the Stein--Tomas restriction type estimates. In
Section~\ref{sec:hardy} we recall the theory of the Hardy
spaces $H^p_{L_1,L_2}(X_1\times X_2)$, $1\leq p\leq 2$,
associated with operators $L_1$, $L_2$. Our main results,
Theorems~\ref{th3.1} and~\ref{th3.2}, are stated in
Section~\ref{sec:multipliertheorems}, along with some examples
of operators for which the assumptions of Theorems~\ref{th3.1}
and~\ref{th3.2} hold, namely the standard Laplace operator,
Schr\"odinger operators with inverse-square potential, and
sub-Laplacians on homogeneous groups. In
Section~\ref{sec:off-diagonal} we use the Stein--Tomas
restriction type estimates to obtain off-diagonal estimates of
multivariable spectral multipliers, which play an important
role in the proofs of Theorems~\ref{th3.1} and~\ref{th3.2} in
Section~\ref{sec:proofofmainthms}. Finally, in
Section~\ref{sec:applications} we conclude our paper by
applying our results to the analysis of second-order elliptic
differential operators, specifically Riesz-transform-like
operators and double Bochner--Riesz means.

Throughout, the symbols ``$c$" and ``$C$" will denote (possibly
different) constants that are independent of the essential
variables.

\medskip

\section{Notation and preliminary
results}\label{sec:notationprelim}
\setcounter{equation}{0}

We begin by describing our notation and basic assumptions. As
is usually the case in the theory of spectral multipliers, we
require the doubling condition for the underlying spaces,
together with some estimates for semigroups generated by
operators (see for instance~\cite{DOS, DY, Ou, Si1}). We assume
that the ambient spaces $X_i$, $i = 1$, 2, are each equipped
with a Borel measure~$\mu_i$ and metric~$d_i$. We suppose
throughout that for $i = 1$, 2, $\mu_i$ is a doubling measure:
there exists a constant $C_i>0$ such that
\begin{eqnarray}
V_i(x_i,2r)\leq C_i V_i(x_i,r)\ \ \ \ \ \
{\text{for all}}\,r>0,\,x_i\in X_i,\
\label{e2.1}
\end{eqnarray}
where $V_i(x_i,r)$ denotes the volume of the open ball $B_i =
B_i(x_i,r) := \{y_i \in X_i: d_i(y_i, x_i)<r\}$
(see~\cite{CW}).
%
%
Equivalently, there exist positive constants $C$ and~$n_i$ such
that the doubling condition~\eqref{doubling} holds:
\[
    V_i(x_i, \lambda r)
    \le C \lambda^{n_i} V_i(x_i,   r)\,\;
    {\text{for all}}\, x_i \in X_i,\, \lambda \ge 1, \, r > 0.
\]
%
Throughout this paper we always assume that $V_i(X_i) = \infty$
and condition~\eqref{doubling} holds.

In particular, $X_1$ and $X_2$ are spaces of homogeneous type,
as defined in~\cite[Chapter~III]{CW}. We recall that a
\emph{space of homogeneous type} is a set~$X$ equipped with a
quasimetric~$\rho$ for which all the balls $B(x,r)$ are open,
and with a nonnegative Borel measure~$\mu$ that is a
\emph{doubling measure}:
\[
    \mu(B(x,2r))
    \leq A_1 \mu(B(x,r))
    \qquad
    \text{for all~$x\in X$, $r > 0$},
\]
where $A_1$ is finite and independent of $x$ and~$r$. Also the
volume $\mu(B(x,r))$ of each ball is required to be finite. A
quasimetric satisfies the same conditions as a metric, except
that the triangle inequality is weakened to the condition
\[
    \rho(x,y)
    \leq A_0 \big(\rho(x,z) + \rho(z,y)\big)
    \qquad
    \text{for all~$x$, $y$, $z\in X$,}
\]
where $A_0$ is finite and independent of $x$, $y$ and~$z$.

Given $\lambda > 0$ and a ball~$B_i$ in~$X_i$, we write
$\lambda B_i$ for the $\lambda$-dilated ball, which is the ball
with the same center as $B_i$ and with radius $r_{\lambda B_i}
= \lambda r_{B_i}$. We define the sets
\begin{equation}
    U_0(B_i)
    := B_i
    \qquad {\rm and} \qquad
    U_k(B_i)
    :=2^k B_i\backslash 2^{k-1}B_i
    \,\,\mbox{ for }\,\,k = 1,2, \dots.
    \label{e2.3}
\end{equation}

Let $X_1\times X_2$ be the Cartesian product of $X_1$ and~$X_2$
with the product measure $\mu = \mu_1\times \mu_2$. For
functions $f$ defined on $X_1 \times X_2$, we will use the
mixed Lebesgue norms $\|\cdot\|_{L^{p_1}({X_1; \, L^{p_2}(X_2))
}}$ and  $\|\cdot\|_{L^{p_1}(X_2; \, L^{p_2}(X_1)) }$ for
$p_1,p_2\in [1,\infty]$ defined by
$$
 \|f\|_{L^{p_1}({X_1; \, L^{p_2}(X_2)) }}
 :=\left\| \big\|f(x_1, x_2)\big\|_{L^{p_2}(X_2)} \right\|_{L^{p_1}(X_1)}
$$
and $$
 \|f\|_{L^{p_1}(X_2; \, L^{p_2}(X_1)) }
 :=\left\| \big\|f(x_1, x_2)\big\|_{L^{p_2}(X_1)} \right\|_{L^{p_1}(X_2)},\ \  
$$
respectively. In particular, when $p_1=p_2=p$ for some $1\leq
p\leq \infty$, we see that $\|f\|_{L^{p_1}(X_1; \,
L^{p_2}(X_2)) } =\|f\|_{L^{p_1}(X_2; \, L^{p_2}(X_1))
}=\|f\|_{L^p(X_1\times X_2)}$.

For functions $F$ defined on $\mathbb R\times \mathbb R$, to
avoid ambiguity, we use the notation
$$
 \|F\|_{L^{p_1}_{\lambda_1}({\mathbb R; \, L_{\lambda_2}^{p_2}(\mathbb R)) }}
 :=\left\| \big\|F(\lambda_1, \lambda_2)\big\|_{L_{\lambda_2}^{p_2}(\mathbb R)} \right\|_{L_{\lambda_1}^{p_1}(\mathbb R)}
$$
and
$$
 \|F\|_{L^{p_1}_{\lambda_2}(\mathbb R; \, L^{p_2}_{\lambda_1}(\mathbb R)) }
 :=\left\| \big\|F(\lambda_1, \lambda_2)\big\|_{L_{\lambda_1}^{p_2}(\mathbb R)} \right\|_{L_{\lambda_2}^{p_1}(\mathbb R)}.\ \  
$$
We use the following mixed Sobolev norms:
$$
    \|F\|_{W^{(s_1,s_2), p}(\mathbb R\times {\mathbb R})}
    := \left\|\left( I-\frac{\partial^2}{\partial \la^2_1}\right)^{s_1/2}
        \left(I - \frac{\partial^2}{\partial \la^2_2}\right)^{s_2/2}F
        \right\|_{ L^{p}(\mathbb R^2)} \, ,
$$
$$
    \|F\|_{L^{p_2}_{\lambda_2}({\mathbb R}; \,
        W^{s,p_1}_{\lambda_1}(\mathbb R))}
    := \left\|\left(I - \frac{\partial^2}{\partial \la^2_1}\right)^{s/2}F
        \right\|_{{L^{p_2}_{\lambda_2}({\mathbb R};
        \, L^{p_1}_{\lambda_1}({\mathbb R})) }} \, ,
$$
and
$$
    \|F\|_{L^{p_1}_{\lambda_1}({\mathbb R};
        \, W^{s,p_2}_{\lambda_2}(\mathbb R))}
    := \left\|\left(I-\frac{\partial^2}{\partial \la^2_2}\right)^{s/2}F
        \right\|_{{L^{p_1}_{\lambda_1}({\mathbb R};
        \, L^{p_2}_{\lambda_2}({\mathbb R})) }}. 
$$

We write $\|T\|_{\mathcal{B}_1 \to \mathcal{B}_2}$ for the
operator norm of a bounded linear operator $T: \mathcal{B}_1
\to \mathcal{B}_2$ between Banach spaces $\mathcal{B}_1$
and~$\mathcal{B}_2$.


Given a subset $E\subseteq X_1\times X_2$, we denote
by~$\chi_E$ the characteristic function of~$E$ and set
$$
    P_Ef(x)
    := \chi_E(x) f(x).
$$

Given $R > 0$ and a one-variable function $F$ on ${\mathbb R}$,
the dilation $\delta_{R}F$ is defined by $\delta_R F(\lambda)
:= F(R\la)$. Given $R_1$, $R_2 > 0$ and a function $F$
on~${\mathbb R^2}$, the dilation $\delta_{(R_1, R_2)}F$ is
defined by
\begin{eqnarray}\label{dilation}
    (\delta_{(R_1, R_2)}F)(\lambda_1, \lambda_2)
    := F(R_1\lambda_1,R_2\lambda_2).
\end{eqnarray}
In particular, $\delta_{(R_1, 1)}F:=F(R_1\lambda_1,\lambda_2)$
represents dilation in the first variable only, by a factor
of~$R_1$, and similarly for $\delta_{(1, R_2)}F :=
F(\lambda_1,R_2\lambda_2)$.
For $F\in L^2({\mathbb R^2})$, the Fourier transform of $F$ is
given by
\begin{eqnarray*}
    {\widehat F} (\xi_1, \xi_2)
    &:=& \frac{1}{(2\pi)^2} \int_{-\infty}^{+\infty}\int_{-\infty}^{+\infty}
    F(\lambda_1, \lambda_2)   e^{-i(\xi_1\lambda_1+\xi_2\lambda_2)}\, d\lambda_1d\lambda_2.
\end{eqnarray*}

As usual, $p'$ denotes the conjugate exponent of
$p\in[1,\infty]$, so that $1/p + 1/{p'} = 1$.

\subsection{Finite propagation speed property for the wave
equation}\label{sec:finitespeed} \ For $i = 1$, 2, and for $\rho
> 0$, we define the diagonal subset~$\D^i_\rho$ of $X_i\times
X_i$ by
\begin{equation*}
    \D^i_\rho
    :=\{ (x_i,\, y_i)\in X_i\times X_i: {d}_i(x_i,\, y_i) \le \rho \}.
\end{equation*}
Consider an operator $T_i$ from $L^p(X_i)$ to $L^q(X_i)$, for some
$p$, $q$. Fix $\rho > 0$. We say that
\begin{equation}\label{e2.4}
    \support K_{T_i}
    \subseteq \D^i_\rho
\end{equation}
if $\langle T_i f, g \rangle = 0$ for all $f$, $g$ in~$C(X_i)$ whose
supports are far apart with respect to~$\rho$, in the following
sense:
\begin{enumerate}
    \item[(i)] $f$ is supported in some ball
        $B_i(x_i^{(f)}, r^{(f)})$,

    \item[(ii)] $g$ is supported in some ball
        $B_i(x_i^{(g)}, r^{(g)})$, and

    \item[(iii)] $r^{(f)} + r^{(g)} + \rho <
        d_i(x_i^{(f)},x_i^{(g)})$.
\end{enumerate}
Note that if $T_i$ is an integral operator with a kernel $K_{T_i}$,
then~\eqref{e2.4} coincides with the standard meaning of $\support
K_{T_i} \subseteq \D^i_\rho$, namely $K_{T_i}(x_i, \, y_i) = 0$ for
all $(x_i, \, y_i) \notin \D^i_\rho$.

\begin{definition}\label{def:finitspeed}
    For $i = 1$, 2, let $L_i$ be a nonnegative self-adjoint
    operator on~$L^2({X_i})$, where $X_i$ is a metric measure
    space. We say that the operators $L_i$, $i = 1$, 2, satisfy
    the \emph{finite propagation speed property} if
    \[
        \support K_{\cos(t\sqrt{L_i})}
        \subseteq \D^i_t \quad {\text{for all $t > 0$.}}
        \leqno{\rm (FS)}
    \]
\end{definition}

Property (FS) holds for most second-order self-adjoint
operators. It is equivalent to the \emph{Davies--Gaffney
estimates}, which state that the semigroup $e^{-tL_i}$ has
integral kernels $p_t(x_i,y_i)$ satisfying the following
estimates
\[
    \leqno{\rm(DG)} \hspace{4cm}
    \big\|P_{B_i(x_i, \sqrt{t})} e^{-tL_i}
        P_{B_i(y_i, \sqrt{t})}\big\|_{L^2(X_i)\to L^2(X_i)}
    \leq C \exp\left(-c \dfrac{d_i^2(x_i, y_i)}{t}\right)
\]
for all $t > 0$ and all $x_i$, $y_i\in X_i$. See for
example~\cite{CGT, CoSi, Si2}.




\medskip

Recall that the semigroup $e^{-tL_i}$ generated by $L_i$, $i =
1$, 2, is said to satisfy \emph{Gaussian estimates} if there
exist constants $C$, $c > 0$ such that for all $t > 0$ and all
$x_i$, $y_i\in X_i$, the semigroup $e^{-tL_i}$ has integral
kernels $p_t(x_i,y_i)$ satisfying the following estimates
\[
    \leqno{\rm (GE)} \hspace{4cm}
    |p_t(x_i, y_i)|
    \leq \dfrac{C}{V_i(x_i,\sqrt{t})}
        \exp\left(-c \frac{d_i^2(x_i,y_i)}{t} \right).
\]
It is immediate that ${\rm (GE)} \Rightarrow {\rm (DG)}$.
However, there are many operators which satisfy Davies--Gaffney
estimates ${\rm (DG)}$ but for which classical Gaussian
estimates ${\rm (GE)}$ fail. For example, Schr\"odinger
operators with inverse-square potential fall into this
category~\cite{LSV}. See also
Section~\ref{sec:multipliertheorems} below.

\subsection{Stein--Tomas restriction type estimates}
\label{sec:restriction}

In this subsection we recall the restriction type estimates
$({\rm ST}^{q}_{p, 2})$, which were originally introduced
in~\cite{DOS} for $p = 1$, and then in~\cite{COSY} for
general~$p$. These estimates control the $L^p\to L^2$ operator
norm of the composition of a multiplier $F(\sqrt{L_i})$ with a
suitable projection, in terms of the $L^q(\RR)$ norm of a
suitable dilate of~$F$.

\begin{definition}\label{def:RTE}
    Let $X_i$, $i = 1$, 2, be metric measure spaces satisfying
    the doubling condition~\eqref{doubling} with
    exponent~$n_i$. Consider nonnegative self-adjoint operators
    $L_i$, $i = 1$, 2, and numbers $p$, $q$ such that $1 \leq p
    < 2$ and $1 \leq q \leq \infty$. We say that $L_i$
    satisfies the \emph{Stein--Tomas restriction type
    estimates}~$({\rm ST}^{q}_{p, 2})$ if there is a
    constant~$C > 0$ such that for each $R > 0$ and for all
    Borel functions $F$ with $\support F \subset [0, R]$, we
    have
    \[
        \big\|F(\sqrt{L_i})P_{B_i(x_i, r)} \big\|_{L^p(X_i)\to L^2(X_i)}
        \leq C \, \bigg(\frac{(Rr)^{n_i}}{ V_i(x_i, r)}\bigg)^{1/p - 1/2}
            \big\|F(R\lambda)\big\|_{L^q({\mathbb R})}
        \leqno{({\rm ST}^{q}_{p, 2})}
    \]
    for all $x_i\in X_i$ and for all $r\geq 1/R$.
\end{definition}

Note that if restriction type estimates $({\rm ST}^{q}_{p, 2})$
hold for some $q\in [1, \infty)$, then $({\rm ST}^{\tilde
q}_{p, 2}) $ holds for all $\tilde{q}\geq q$ including the case
$\tilde{q} = \infty$.

It is known that for the standard Laplace operator $\Delta_i $
on~${\mathbb R^{n_i}}$, restriction type estimates $({\rm
ST}^{2}_{p, 2})$ are equivalent to the $(p,2)$ restriction
estimates of Stein--Tomas, namely
\[
    \big\|dE_{\sqrt{\Delta_i}}(\lambda)\big\|_{L^p(\RR^{n_i})\to L^{p'}(\RR^{n_i})}
    \leq C\lambda^{n_i(1/p- 1/p')-1}
\]
for all $p\in [1, 2(n+1)/(n+3)]$; see~\cite[Proposition
2.4]{COSY}.

The following result, which was proved
in~\cite[Proposition~2.3]{COSY}, shows that if ${q} = \infty$
then restriction type estimates~$({\rm ST}^{{\infty}}_{p, 2})$
follow from the standard elliptic estimates.

\medskip

\begin{prop}[\cite{COSY}]\label{prop2.33}
    Assume that the metric measure spaces $X_i$, $i = 1$, 2,
    satisfy the doubling condition~\eqref{doubling} with
    exponent~$n_i$. Suppose $1 \leq p < 2 $ and $N > n_i(1/p -
    1/2)$. Let $L_i$, $i = 1$, 2, be nonnegative self-adjoint
    operators. Then ${({\rm ST}^{\infty}_{p, 2})}$ is
    equivalent to each of the following conditions:
    \begin{itemize}
    \item[(a)]  For all $x_i\in X_i$ and  $r\geq t>0$,
    \[
        \big\|e^{-t^2L_i}P_{B_i(x_i, r)}\big\|_{L^p(X_i)\to L^2(X_i)}
        \leq CV_i(x_i, r)^{1/2 - 1/p}
            \left(\frac{r}{t}\right)^{n_i(1/p - 1/2)}.
        \leqno({\rm G}_{p,2})
    \]

    \item[(b)] For all $x\in X$ and $r\geq  t > 0$,
    \[
        \big\|(I+t\sqrt{L_i})^{-N}P_{B_i(x_i, r)}\big\|_{L^p(X_i)\to L^2(X_i)}
        \leq CV_i(x_i, r)^{1/2 - 1/p}
            \left(\frac{r}{t}\right)^{n_i(1/p - 1/2)}.
        \leqno({\rm E}_{p,2})
    \]
    \end{itemize}
\end{prop}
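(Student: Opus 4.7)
The plan is to establish the two claimed equivalences $({\rm ST}^\infty_{p,2}) \Leftrightarrow ({\rm G}_{p,2})$ and $({\rm ST}^\infty_{p,2}) \Leftrightarrow ({\rm E}_{p,2})$ separately, using a "factorization" trick for the forward directions (from the semigroup/resolvent bounds to the multiplier bound) and a dyadic spectral decomposition for the backward directions. Throughout, the spectral theorem gives $\|m(\sqrt{L_i})\|_{L^2 \to L^2} = \|m\|_{L^\infty(\RR)}$, which is the only "functional calculus" input required.

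For the forward directions, given $F$ with $\support F \subseteq [0,R]$, write $F(\lambda) = G(\lambda) m_R(\lambda)$, where $m_R(\lambda) = e^{-\lambda^2/R^2}$ (for the equivalence with $({\rm G}_{p,2})$) or $m_R(\lambda) = (1 + \lambda/R)^{-N}$ (for the equivalence with $({\rm E}_{p,2})$), and $G(\lambda) := F(\lambda)/m_R(\lambda)$. On $[0,R]$ the factor $1/m_R$ is bounded by a universal constant, so $\|G\|_\infty \lesssim \|F\|_\infty = \|F(R\,\cdot)\|_\infty$. Then
\[
    \|F(\sqrt{L_i}) P_{B_i(x_i,r)}\|_{L^p \to L^2}
    \leq \|G(\sqrt{L_i})\|_{L^2 \to L^2}\,
        \|m_R(\sqrt{L_i}) P_{B_i(x_i,r)}\|_{L^p \to L^2},
\]
and the last factor is controlled by $({\rm G}_{p,2})$ or $({\rm E}_{p,2})$ applied with $t = 1/R$ (noting that the hypothesis $r \geq 1/R$ from $({\rm ST}^\infty_{p,2})$ matches $r \geq t$). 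Inserting the bound $V_i(x_i,r)^{1/2-1/p}(Rr)^{n_i(1/p-1/2)}$ yields $({\rm ST}^\infty_{p,2})$.

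For the backward directions, fix a smooth dyadic partition of unity $1 = \psi_0(\lambda) + \sum_{k \geq 1}\psi_k(\lambda)$ on $[0,\infty)$ with $\support\psi_0 \subseteq [0,2]$ and $\support\psi_k \subseteq [2^{k-1}, 2^{k+1}]$ for $k \geq 1$. To deduce $({\rm G}_{p,2})$ from $({\rm ST}^\infty_{p,2})$, write $e^{-t^2\lambda^2} = \sum_{k \geq 0} F_k(\lambda)$ with $F_k(\lambda) := \psi_k(t\lambda)e^{-t^2\lambda^2}$, so $\support F_k \subseteq [0, R_k]$ with $R_k := 2^{k+1}/t$. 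On that support $|F_k(\lambda)| \lesssim e^{-c\,4^k}$, so $\|F_k(R_k\cdot)\|_\infty \lesssim e^{-c\,4^k}$. Since $r \geq t \geq 1/R_k$, $({\rm ST}^\infty_{p,2})$ gives
\[
    \|F_k(\sqrt{L_i})P_{B_i(x_i,r)}\|_{L^p\to L^2}
    \lesssim V_i(x_i,r)^{1/2-1/p}(R_k r)^{n_i(1/p-1/2)} e^{-c\,4^k},
\]
and summing over $k$ converges rapidly to the bound in $({\rm G}_{p,2})$. The deduction of $({\rm E}_{p,2})$ is entirely parallel with $F_k(\lambda) := \psi_k(t\lambda)(1+t\lambda)^{-N}$; now $\|F_k(R_k\cdot)\|_\infty \lesssim 2^{-kN}$, and the sum becomes a geometric series $\sum_k 2^{k[n_i(1/p-1/2) - N]}$.

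The only delicate point, and hence the main obstacle, is the convergence of the geometric series in the $({\rm E}_{p,2})$ direction: it converges \emph{precisely} when $N > n_i(1/p-1/2)$, which is the exponent hypothesis in the statement. The analogous series in the $({\rm G}_{p,2})$ direction converges unconditionally thanks to super-exponential Gaussian decay, and the support compatibility $r \geq 1/R_k$ needed to invoke $({\rm ST}^\infty_{p,2})$ on each dyadic piece is automatic from $r \geq t$ and $R_k \geq 1/t$. No further structural properties of $L_i$ (such as finite propagation speed or Gaussian estimates) are used; the argument relies only on the spectral theorem and the hypothesized norm bound.
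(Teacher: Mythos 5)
The paper does not prove Proposition~\ref{prop2.33} itself; it states it with a citation to \cite[Proposition~2.3]{COSY}, so there is no in-paper proof to compare against. Your argument is correct and is the standard one used in \cite{COSY}: for the forward implications you factor out $e^{-\lambda^2/R^2}$ (resp.\ $(1+\lambda/R)^{-N}$), whose reciprocal is uniformly bounded on $[0,R]$, and apply $({\rm G}_{p,2})$ (resp.\ $({\rm E}_{p,2})$) at scale $t=1/R$; for the backward implications you partition the spectrum dyadically, invoke $({\rm ST}^\infty_{p,2})$ on each piece (the condition $r\ge 1/R_k$ is indeed automatic from $r\ge t$), and sum, with the Gaussian series converging unconditionally and the resolvent series converging precisely because $N>n_i(1/p-1/2)$.
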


Note that when $L_i$, $i = 1$, 2, are nonnegative self-adjoint
operators on $L^2(X_i)$ satisfying Gaussian estimates~${\rm
(GE)}$, it follows from Proposition~\ref{prop2.33} that
estimate $({\rm  ST}_{1, 2}^{\infty})$ holds.


\medskip

\section{The Hardy space $H^{p}_{L_1, L_2}(X_1\times X_2)$}
\label{sec:hardy}
\setcounter{equation}{0}

The theory of the Hardy spaces $H^{1}_{L_1, L_2}({\mathbb
R^{n_1}}\times {\mathbb R^{n_2}})$ associated to operators
$L_i$, $i = 1$, 2, was introduced and studied in \cite{DSTY,
DLY} in the case where the underlying space ${\mathbb
R^{n_1}}\times {\mathbb R^{n_2}}$ is the product of Euclidean
spaces and the operators $L_i$, $i = 1$, 2, are self-adjoint
and nonnegative and possess Gaussian upper bounds on their heat
kernels. This theory was generalized in~\cite{CDLWY} to the
case where the underlying space $X_1\times X_2$ is the product
of metric measure spaces $X_i$, $i = 1$, 2, satisfying the
doubling condition~\eqref{doubling} with exponent $n_i$, and
$L_i$, $i = 1$, 2, are nonnegative self-adjoint operators
satisfying the finite propagation speed property. We give a
brief presentation of this generalization, which is needed for
our study of multivariable spectral multipliers. For more
detail, see~\cite{CDLWY}.

Given a function $f$ on $L^2({X_1\times X_2})$, the square
function $S(f)$ associated with operators $L_1$ and $L_2$ is
defined by
\begin{eqnarray}\label{e2.6}
\hskip.7cm S(f)(x):= \left(\iint_{\Gamma(x) }\big|\big(t_1^2L_1e^{-t_1^2L_1}\otimes
t_2^2L_2e^{-t_2^2L_2}\big)f(y)\big|^2\
{dy \ \! dt\over t_1t_2V_1(x_1,t_1) V_2(x_2,t_2)}\right)^{1/2}, \label{e3.222}
\end{eqnarray}
where $\Gamma(x)$ is the product cone
$\Gamma(x):=\Gamma_1(x_1)\times\Gamma_2(x_2)$ and
$\Gamma_i(x_i):=\{(y_i,t_i)\in X_i\times
\mathbb{R}_+:d_i(x_i,y_i)<t_i\}$, $i = 1$, 2. It is known that
there exist constants $C_1, C_2$ with $0<C_1\leq C_2<\infty$
such that for any $f\in L^2({X_1\times X_2})$,
\begin{eqnarray}\label{e2.7}
    C_1\|f\|_{L^2(X_1\times X_2)}
    \leq \|S(f)\|_{L^2(X_1\times X_2)}
    \leq C_2\|f\|_{L^2(X_1\times X_2)}.
\end{eqnarray}

\begin{definition} \label{def2.2}
    Suppose $1\leq p\leq 2$. The \emph{Hardy space
    $H^p_{{L_1,L_2}}({X_1\times X_2})$ associated to $L_1$ and
    $L_2$} is defined as the completion of the set
    \[
        \{ f\in L^2({X_1\times X_2}): \|S(f)\|_{L^p({X_1\times X_2})}<\infty \}
    \]
    with respect to the norm
    \[
    \|f\|_{H^{p}_{{L_1,L_2}}({X_1\times X_2})
    } :=\|S(f) \|_{L^p({X_1\times X_2})}.
    \]
\end{definition}

Using Fubini's theorem and the spectral theorem, it can be
shown that $H^{2}_{{L_1,L_2}}({X_1\times X_2}) = L^2({X_1\times
X_2})$ with equivalent norms. Additionally, the set
$H^{1}_{{L_1,L_2}}({X_1\times X_2})\cap L^2({X_1\times X_2})$
is dense in $H^{1}_{{L_1,L_2}}({X_1\times X_2})$. Note that in
the special case of ${\mathbb R}^{n_1}\times {\mathbb R}^{n_2}$
with $L_i$ being the usual Laplacian $\Delta_{n_i}$ on
${\mathbb R^{n_i}}$, the Hardy space
$H^p_{{L_1,L_2}}({X_1\times X_2})$ from Definition~\ref{def2.2}
coincides with the Hardy space $H^1({\mathbb R^{n_1}}\times
{\mathbb R^{n_2}})$, with equivalent norms; see~\cite{CF1,
CF2}.

\begin{prop}\label{prop2.3}
    Let  $L_i$, $i = 1$, 2, be nonnegative self-adjoint operators
    on $L^2(X_i)$ satisfying the finite propagation speed
    property ${\rm (FS)}$ and the estimate $({\rm G}_{p_i,2})$
    for some $p_i$ with $1\leq p_i < 2$. Then for each $p$ with
    $\max\{p_1, p_2\} < p \leq 2$, the Hardy space $H^{p}_{L_1,
    L_2}(X_1\times X_2)$ and the Lebesgue space $L^p(X_1\times
    X_2)$ coincide and their norms are equivalent.
\end{prop}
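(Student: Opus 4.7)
The plan is to establish the two-sided norm equivalence $\|S(f)\|_{L^p(X_1\times X_2)} \approx \|f\|_{L^p(X_1\times X_2)}$ for all $f \in L^p(X_1\times X_2) \cap L^2(X_1\times X_2)$ and all $p \in (p_{\max},2]$, and then to identify $H^p_{L_1,L_2}(X_1\times X_2)$ with $L^p(X_1\times X_2)$ by density.

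\emph{Step 1 (upper bound).} For each $i=1,2$ define the one-parameter conical square function
$$S_i g(x_i) := \left( \iint_{\Gamma_i(x_i)} \bigl| t_i^2 L_i e^{-t_i^2 L_i} g(y_i) \bigr|^2 \frac{d\mu_i(y_i)\, dt_i}{t_i V_i(x_i,t_i)} \right)^{1/2}.$$
Under hypotheses (FS) and $(\mathrm{G}_{p_i,2})$, the one-parameter Hardy-spaces theory for operators with generalized Gaussian estimates gives $\|S_i g\|_{L^p(X_i)} \approx \|g\|_{L^p(X_i)}$ for $p \in (p_i,2]$. Since $L_1$ and $L_2$ act on independent variables, the semigroups commute and $S(f)^2$ factors as an iterated expression: for each fixed $x_2$, the function $S(f)(\cdot,x_2)$ is nothing but $S_1$ applied (with values in a Hilbert space encoding the $(y_2,t_2)$-integration) to a vector-valued function whose Hilbert-space norm is $S_2 f(\cdot,x_2)$. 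Applying the vector-valued extension of the $L^p$-boundedness of $S_1$ in $x_1$, followed by Fubini and the $L^p$-boundedness of $S_2$ in $x_2$, yields
$$\|S(f)\|_{L^p(X_1\times X_2)} \lesssim \|S_2 f\|_{L^p(X_1;\,L^p(X_2))} \lesssim \|f\|_{L^p(X_1\times X_2)} \qquad \text{for } p \in (p_{\max},2].$$

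\emph{Step 2 (lower bound).} Use the product Calder\'on-type reproducing formula on $L^2(X_1\times X_2)$,
$$f = c \int_0^\infty \int_0^\infty \bigl[\, t_1^2 L_1 e^{-t_1^2 L_1} \otimes t_2^2 L_2 e^{-t_2^2 L_2} \bigr]^{\,2} f \, \frac{dt_1\, dt_2}{t_1\, t_2},$$
obtained from the spectral theorem. Pair with $g \in L^{p'}(X_1\times X_2) \cap L^2(X_1\times X_2)$ and apply a two-parameter Coifman--Meyer--Stein tent space $T^p$-$T^{p'}$ duality argument to estimate
$$|\langle f,g\rangle| \lesssim \|S(f)\|_{L^p(X_1\times X_2)} \|S(g)\|_{L^{p'}(X_1\times X_2)}.$$
The bound $\|S(g)\|_{L^{p'}} \lesssim \|g\|_{L^{p'}}$ for $p' \in [2,\infty)$ follows from the $L^2$ identity~\eqref{e2.7}, the finite-propagation/Davies--Gaffney estimates, and $L^\infty \to \mathrm{BMO}_{L_1,L_2}$ bounds together with Marcinkiewicz interpolation; this direction does \emph{not} require $(\mathrm{G}_{p_i,2})$. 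Taking the supremum over unit-norm $g$ yields $\|f\|_{L^p} \lesssim \|S(f)\|_{L^p}$.

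\emph{Main obstacle.} The delicate step is Step~1, where the iterated vector-valued argument must be carried out under the weaker hypothesis $(\mathrm{G}_{p_i,2})$ rather than pointwise Gaussian estimates (GE). In the classical Gaussian setting the required Hilbert-space-valued extension of Calder\'on--Zygmund theory is automatic; here it must be built up from the off-diagonal $L^{p_i}\!-\!L^2$ estimates, which is precisely where the quantitative threshold $p > p_{\max}$ arises. A secondary technical point is the careful justification of the product reproducing formula and the two-parameter tent space pairing, where one needs the (FS) property to localize and truncate properly.
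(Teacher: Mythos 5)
The paper does not actually prove Proposition~\ref{prop2.3}: the body of the proof consists only of the remark that it ``is an extension of a similar proof of the corresponding result for the one-parameter Hardy space $H^p_L(X)$ (see \cite[Proposition 9.1(v)]{HMMc} and~\cite{Au})'' and a reference to the companion paper~\cite{CDLWY} for all details. Your two-step plan is a reasonable concrete implementation of exactly that strategy: Step~1 (iterating the one-parameter $L^p$ bound for the conical square function, in Hilbert-space-valued form, and finishing with Fubini) and Step~2 (Calder\'on reproducing formula plus tent-space duality to recover $\|f\|_{L^p}\lesssim\|S(f)\|_{L^p}$) are precisely the two halves of the one-parameter argument in~\cite{HMMc} that one would extend, and you have correctly identified where the hypotheses $(\mathrm G_{p_i,2})$, (FS) and the threshold $p>p_{\max}$ enter. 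So your outline is consistent with the approach the paper signals, even though no direct comparison with a written-out proof is possible here.

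Two cautionary remarks on Step~2. First, the $\|S(g)\|_{L^{p'}}\lesssim\|g\|_{L^{p'}}$ bound for $p'\ge 2$ should be obtained by interpolating the $L^2$ identity~\eqref{e2.7} with an $L^\infty\to\mathrm{BMO}_{L_1,L_2}$ estimate using Chang--Fefferman-type product $L^p$--$\mathrm{BMO}$ interpolation, not Marcinkiewicz interpolation; and the $L^\infty\to\mathrm{BMO}_{L_1,L_2}$ estimate for the product square function is itself a genuine two-parameter Carleson-measure argument (relying on Journ\'e's lemma rather than a one-parameter covering lemma), not a routine extension of the one-parameter fact. Second, the $L^2$ reproducing formula identifies $f$ only modulo the null spaces of $L_1$ and $L_2$; since the paper assumes $\mu_i(X_i)=\infty$ these null spaces are trivial in $L^2(X_i)$, which makes the formula exact, but the point deserves a sentence. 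With those caveats spelled out, the plan is sound.
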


\begin{proof}
This proof is an extension of a similar proof of the
corresponding result for the one-parameter Hardy space
$H^p_L(X)$ (see \cite[Proposition 9.1(v)]{HMMc} and~\cite{Au}).
For the proof of Proposition~\ref{prop2.3}, we refer the reader
to~\cite{CDLWY}.
\end{proof}

%
%
%
%

\subsection[Atomic decomposition for $H^{1}_{L_1, L_2}(X_1\times X_2)$]{Atomic
decomposition for the Hardy space $H^{1}_{L_1, L_2}(X_1\times
X_2)$} \label{sec:Hardyatomic} Following \cite{CDLWY}, we
introduce the notion of $(H^1_{L_1, L_2}, 2, N)$-atoms
associated to operators $L_1$ and~$L_2$.
As noted in the introduction, our metric measure spaces
$(X_i,d_i,\mu_i)$, $i = 1$, 2, are examples of spaces of
homogeneous type. Thus the following result of M.~Christ
(see~\cite{Ch1} and the references therein) shows that they
possess a dyadic grid analogous to that of Euclidean space.

\begin{theorem}[\cite{Ch1}, Theorem~11]\label{le5.2}
Let $(X,\rho,\mu)$ be a space of homogeneous type (as defined
in Section~\ref{sec:notationprelim}). Then there exist a
collection of open subsets $\{I_{\alpha}^k\subset{ X}:\,k\in
{\mathbb Z},\alpha\in \Lambda_k\}$, where $\Lambda_k$ denotes
some (possibly finite) index set depending on $k$, and
constants $\delta\in (0,1)$, $a_0 > 0$, $\eta > 0$ and $C_1$,
$C_2 < \infty$ such that
\begin{itemize}
\item[(i)] \ $\mu({ X}\backslash\cup_{\alpha}I_{\alpha}^k)
    = 0$ for all $k\in{\mathbb Z}$.

\item[(ii)] \ If $\ell\geq k$ then either
    $I_{\beta}^{\ell}\subset I_{\alpha}^{k}$ or
    $I_{\beta}^{\ell}\cap I_{\alpha}^{k} = \emptyset$.

\item[(iii)]\ For each $(k,\alpha)$ and each $\ell < k$,
    there is a unique $\beta$ such that $I_{\alpha}^k
    \subset I_{\beta}^\ell$.

\item[(iv)]\ Diameter $(I_{\alpha}^k)\leq C_1\delta^k$.

\item[(v)]\ Each $I_{\alpha}^k$ contains some ball
    $B(z^k_{\alpha},a_0\delta^k)$.

    \item[(vi)] $\mu(x\in I_\alpha^k \, : \, \rho(x,
        X\setminus I_\alpha^k) \leq t \delta^k) \leq C_2
        t^\eta \mu(I_\alpha^k) \qquad \text{for all $k$,
        $\alpha$, for all $t > 0$.}$
\end{itemize}
\end{theorem}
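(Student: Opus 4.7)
The plan is to follow M.~Christ's classical dyadic-cube construction. First, for each $k \in \mathbb{Z}$, apply Zorn's lemma to select a maximal $\delta^k$-separated set $\{z_\alpha^k\}_{\alpha \in \Lambda_k} \subset X$, so that distinct centers satisfy $\rho(z_\alpha^k, z_\beta^k) \geq \delta^k$ while maximality forces every $x \in X$ to lie within $\delta^k$ of some center. Here $\delta \in (0,1)$ must be chosen sufficiently small relative to the quasi-triangle constant~$A_0$ and the doubling constant~$A_1$; the precise threshold is dictated by the boundary estimate below.

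Second, I would impose a tree structure on $\bigsqcup_k \Lambda_k$. For each $\alpha \in \Lambda_{k+1}$, maximality at level~$k$ furnishes some $\beta \in \Lambda_k$ with $\rho(z_\alpha^{k+1}, z_\beta^k) < \delta^k$; fix one such choice as the parent $\pi_k(\alpha)$, breaking ties arbitrarily. Iterating assigns to each $\alpha \in \Lambda_\ell$ a unique ancestor in each~$\Lambda_k$ with $k \leq \ell$. Then define $I_\alpha^k$ via a Whitney-type nearest-center assignment: at each level $\ell$, assign every $x \in X$ to its nearest center in $\Lambda_\ell$ (with a consistent deterministic tie-breaking), and let $I_\alpha^k$ collect those $x$ whose level-$k$ ancestor is $z_\alpha^k$. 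Properties (ii) and (iii) are built into the tree. Property (i) follows from maximal covering at each level (the leftover set has measure zero). Property (iv) follows from the geometric-series bound $\rho(x, z_\alpha^k) \leq \sum_{\ell \geq k} A_0^{\ell-k+1}\delta^\ell \lesssim \delta^k$ once $\delta$ is small enough. Property (v) follows since the ball $B(z_\alpha^k, a_0\delta^k)$ with $a_0 < 1/2$ is $\delta^k$-separated from every other level-$k$ center and is therefore assigned entirely to~$I_\alpha^k$.

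The main obstacle is the small-boundary property~(vi). A point $x \in I_\alpha^k$ lying within $t\delta^k$ of $X \setminus I_\alpha^k$ must, at some descent level $\ell \geq k$, come within a distance comparable to $t\delta^k$ of the ``Voronoi wall'' separating its ancestor $z_\gamma^\ell$ from a competing level-$\ell$ center. The decisive estimate, obtained by a stopping-time argument identifying the smallest~$\ell$ at which the trajectory first approaches such a wall, controls the measure of the bad set inside each Voronoi cell by $Ct^\eta \mu(B(z_\gamma^\ell, \delta^\ell))$, where the exponent $\eta > 0$ is extracted from applying the doubling condition to thin annuli. Summing the resulting geometric series over $\ell \geq k$ and using disjointness of cells at each level delivers
\[
    \mu\bigl(\{x \in I_\alpha^k : \rho(x, X \setminus I_\alpha^k) \leq t\delta^k\}\bigr)
    \leq C_2\, t^\eta\, \mu(I_\alpha^k).
\]
Making this bookkeeping rigorous — in particular, arranging $\delta$ small enough that an approach to a wall at one level cannot be ``undone'' by finer-level assignments, so that the boundary thickness accumulates geometrically rather than catastrophically — is the delicate technical heart of the proof.
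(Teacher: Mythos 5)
The paper does not prove this statement; it is quoted verbatim as Theorem~11 of M.~Christ's paper~\cite{Ch1} and used as a black box to obtain dyadic cubes on a space of homogeneous type. So there is no ``paper's proof'' to compare against — the relevant comparison is with Christ's own construction, of which your sketch is a rough outline but with a genuine gap in the definition of the cubes.

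The gap is in the second paragraph, where you define $I_\alpha^k$ via a ``Whitney-type nearest-center assignment.'' You propose to assign each $x$ at each level $\ell$ to its nearest center $z_\gamma^\ell$ and then declare $x \in I_\alpha^k$ if ``its level-$k$ ancestor is $z_\alpha^k$.'' But which ancestor? The Voronoi assignment of $x$ at different levels need not be compatible with the tree: the level-$k$ ancestor (in the tree) of the center nearest to $x$ at level $\ell$ can differ from the center nearest to $x$ at level $k$, and can even differ as $\ell$ varies. Without resolving this, $I_\alpha^k$ is not well-defined, and with it go the nesting properties (ii)--(iii) you claim are ``built into the tree.'' Christ's actual construction sidesteps this by defining the cubes purely from the tree, roughly as $I_\alpha^k := \bigcup_{\ell\geq k}\bigcup\{B(z_\beta^\ell, a_0\delta^\ell) : z_\beta^\ell \text{ descends from } z_\alpha^k\}$ (or via a closely related ``stabilization'' procedure), without any nearest-center rule; the Voronoi idea belongs to a different family of constructions (David, Sawyer--Wheeden, Hytönen--Kairema) where consistency across levels is enforced by a careful selection, not asserted. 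You also do not address the requirement that the $I_\alpha^k$ be \emph{open} sets, which fails for raw Voronoi cells and requires an explicit argument (interiors plus a measure-zero exceptional set) to reconcile with (i). Finally, the small-boundary property (vi) — which you rightly flag as ``the delicate technical heart'' — is left as a verbal description; in Christ's argument this is a multi-page iteration/thin-annulus estimate and cannot be waved through. In short: the overall architecture (separated nets, tree, geometric-series diameter bound, thin-annulus estimate for (vi)) is the right one, but as written the cube definition is inconsistent and (vi) is not proved.
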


The sets $I_\alpha^k$ are known as dyadic cubes. We write
$\ell(I_\alpha^k)$ for the diameter of~$I_\alpha^k$. By the
\emph{$s$-fold dilate $sI_\alpha^k$ of a dyadic
cube~$I_\alpha^k$} for $s > 0$, we mean the ball
$B(z^k_\alpha,sC_1\delta^k/2)$ centred at $z^k_\alpha$ of
radius~$sC_1\delta^k/2$.

Let $X_i$, $i = 1$, 2, be spaces of homogeneous type. The open
set $I_{\alpha_1}^{k_1}\times I_{\alpha_2}^{k_2}$, for $k_1$,
$k_2\in \mathbb{Z}$, $\alpha_1\in \Lambda_{k_1}$ and
$\alpha_2\in \Lambda_{k_2}$, is called a \emph{dyadic rectangle
of $X_1\times X_2$}. Let $\Omega\subset X_1\times X_2$ be an
open set of finite measure. Denote by $m(\Omega)$ the maximal
dyadic subrectangles of~$\Omega$. By the \emph{$s$-fold dilate
$sR$ of a dyadic rectangle~$R = I^{k_1}_{\alpha_1}\times
I^{k_2}_{\alpha_2}$} for $s > 0$, we mean the the product $sR
:= sI^{k_1}_{\alpha_1}\times sI^{k_2}_{\alpha_2}$ of the
$s$-fold dilates of the factors.

\begin{definition}\label{def2.5}
Let $N$ be a positive integer. A function $a\in L^2({X_1\times
X_2})$ is called an \emph{$(H^1_{L_1, L_2}, 2, N)$-atom} if it
satisfies the conditions

\smallskip

\noindent $ 1)$  $\support a\subset \Omega$, where $\Omega$ is
an open set of ${X_1\times X_2}$ with finite measure; and

\smallskip
\noindent $ 2)$ $a$ can be further decomposed into
$$
    a = \sum\limits_{R\in m(\Omega)} a_R,
$$
where $m(\Omega)$ is the set of all maximal dyadic
subrectangles of $\Omega$, and there exists a function $b_R \in
L^2({X_1\times X_2})$ such that for each $k_1, k_2\in\{0, 1,
\ldots, N\}$, $b_R$ belongs to the range of $L_1^{k_1}\otimes
L_2^{k_2}$, and moreover such that

\smallskip

 (i) \ \ \ $a_R = \big(L_1^{N} \otimes L_2^{N}\big) b_R$;

 \smallskip

 (ii) \ \ $\support\big(L_1^{k_1}  \otimes L_2^{k_2}\big)b_R\subset
10R$, \ \ $k_1, k_2\in\{0, 1, \ldots, N\}$;

\smallskip

 (iii) \ $||a||_{L^2( {X_1\times X_2} )} \leq
\mu(\Omega)^{-{1\over 2}}$ and
$$
    \sum_{  k_2=0}^N\sum_{  k_2=0}^N \sum_{R\in m(\Omega)}\ell(I_R)^{-4N} \ell(J_R)^{-4N}
    \Big\|\big(\ell(I_R)^2 \, L_1\big)^{k_1}\otimes \big(\ell(J_R)^2 \,
    L_2\big)^{k_2} b_R\Big\|_{L^2({X_1\times X_2})}^2\leq V(\Omega)^{-1}.
$$
\end{definition}

\medskip

We are now able to define an atomic Hardy space,
$H^1_{L_1,L_2,at,N}(X_1\times X_2)$. It is shown
in~\cite{CDLWY} that this atomic Hardy space is equivalent to
the Hardy space $H^1_{L_1,L_2}(X_1\times X_2)$ that is defined
above via square functions.

\medskip

\begin{definition}\label{def2.6}
    Let $N>\max\{n_1/4, n_2/4\}$. The Hardy space $H^1_{L_1, L_2,
    at,N}({X_1\times X_2})$ is defined as follows. We say that $f =
    \sum\lambda_ja_j$ is an \emph{atomic $(H^1_{L_1, L_2}, 2,
    N)$-representation of~$f$} if $\{\lambda_j\}_{j=0}^\infty\in
    \ell^1$, each $a_j$ is an $(H^1_{L_1, L_2}, 2, N)$-atom, and
    the sum converges in $L^2({X_1\times X_2})$. Set
    $$
    \mathbb{H}^1_{L_1, L_2, at, N}({X_1\times X_2}) :=\left\{f: f~\text{has
    an atomic}~(H^1_{L_1, L_2}, 2, N)\text{-representation}\right\},
    $$
    with the norm given by
    \begin{eqnarray}\label{e2.8}
        \hspace*{1cm}
        \|f\|_{\mathbb{H}^1_{L_1, L_2, at, N}({X_1\times X_2})}
        := \inf\left\{ \sum_{j=0}^\infty |\lambda_j|: f = \sum_j\lambda_ja_j
        \ \text{is an atomic}~(H^1_{L_1, L_2}, 2, N)\text{-representation}\right\}.
    \end{eqnarray}
    The \emph{Hardy space} $H^1_{L_1, L_2, at,N}({X_1\times X_2})$ is then
    defined as the completion of $\mathbb{H}^1_{L_1,
    L_2,at,N}({X_1\times X_2})$ with respect to this norm.
\end{definition}

Consequently, when $N > \max\{n_1/4, n_2/4\}$ one may simply
write $H^1_{L_1, L_2, at}(X_1\times X_2)$ in place of
$H^1_{L_1, L_2, at, N}(X_1\times X_2)$, as these spaces are all
equivalent. Thus the Hardy space $H^1_{L_1, L_2}(X_1\times
X_2)$ is given by
$$
    H^1_{L_1, L_2}(X_1\times X_2)
    = H^1_{L_1, L_2, at}(X_1\times X_2)
    = H^1_{L_1, L_2, at, N}(X_1\times X_2)
$$
for each $N > \max\{n_1/4, n_2/4\}$.


\subsection{An interpolation theorem}
\ Finally, we state the following Marcinkiewicz-type
interpolation theorem. For its proof, we refer the reader
to~\cite{CDLWY}.

\begin{theorem}\label{th2.7}
Let $L_i$, $i = 1$, 2, be nonnegative self-adjoint operators on
$L^2(X_i)$ satisfying the finite propagation speed
property~${\rm (FS)}$.
Let $T$ be a sublinear operator bounded from $H^1_{L_1,
L_2}({X_1\times X_2})$ into $L^1({X_1\times X_2})$ and bounded
on $L^2({X_1\times X_2})$ with operator norms $C_1$ and $C_2$,
respectively. If $1 < p \leq 2$, then $T$ is bounded from
$H^p_{L_1, L_2}({X_1\times X_2})$ into $L^p({X_1\times X_2})$
and
\begin{equation}\label{e2.9}
    \|Tf\|_{H^p_{L_1,L_2}({X_1\times X_2})}
      \leq C \|f\|_{L^p({X_1\times X_2})},
\end{equation}
where $C$ depends only on $C_1, C_2$, and $p$.
\end{theorem}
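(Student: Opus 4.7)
My plan is to prove Theorem~\ref{th2.7} by a real Marcinkiewicz-type interpolation between the endpoint estimates $T\colon H^1_{L_1,L_2}\to L^1$ and $T\colon L^2\to L^2$, the latter of which is the same as $T\colon H^2_{L_1,L_2}\to L^2$ by the norm equivalence~\eqref{e2.7}. The objective is to show that for $1<p<2$ and $f\in H^p_{L_1,L_2}\cap L^2(X_1\times X_2)$,
\[
    \|Tf\|_{L^p(X_1\times X_2)}
    \leq C\|S(f)\|_{L^p(X_1\times X_2)}
    = C\|f\|_{H^p_{L_1,L_2}(X_1\times X_2)},
\]
and then to extend to all of $H^p_{L_1,L_2}$ by density of $H^p_{L_1,L_2}\cap L^2$.

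The core step is a Calder\'on--Zygmund-type decomposition of $f$ adapted to the square function at each level $\alpha>0$. Setting $\Omega_\alpha := \{x : S(f)(x) > \alpha\}$, I will construct a splitting $f = g_\alpha + b_\alpha$ in $L^2$ for which
\[
    \|g_\alpha\|_{L^2}^2
    \leq C\!\int_{\Omega_\alpha^{\,c}}\! S(f)^2\,d\mu + C\alpha^2\mu(\Omega_\alpha),
    \qquad
    b_\alpha = \sum_j \lambda_j^\alpha a_j^\alpha,
\]
where each $a_j^\alpha$ is an $(H^1_{L_1,L_2},2,N)$-atom in the sense of Definition~\ref{def2.5} and $\sum_j|\lambda_j^\alpha|\leq C\int_{\Omega_\alpha}S(f)\,d\mu$. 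The construction proceeds by lifting $f$ to the two-parameter tent space through the map $(t_1,t_2)\mapsto (t_1^2L_1 e^{-t_1^2L_1}\otimes t_2^2L_2 e^{-t_2^2L_2})f$, decomposing the lifted function at height $\alpha$ via the dyadic grids of Theorem~\ref{le5.2} and a Chang--Fefferman-type rectangle expansion of $\Omega_\alpha$, and projecting back by means of the reproducing Calder\'on formula for $L_1\otimes L_2$ on $L^2(X_1\times X_2)$.

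With the decomposition in place, the hypotheses give the distributional estimate
\[
    \big|\{x : |Tf(x)|>2\alpha\}\big|
    \leq C_2^{\,2}\,\alpha^{-2}\|g_\alpha\|_{L^2}^2
        + C_1\,\alpha^{-1}\!\sum_j|\lambda_j^\alpha|,
\]
since $\|Ta_j^\alpha\|_{L^1}\leq C_1$ for every atom. Multiplying by $p\alpha^{p-1}$, integrating over $\alpha\in(0,\infty)$, and applying Fubini to exchange the $\alpha$-integral with the corresponding spatial integrals of $S(f)^2$ on $\{S(f)\leq\alpha\}$ and of $S(f)$ on $\{S(f)>\alpha\}$, produces $\|Tf\|_{L^p}^p \leq C\|S(f)\|_{L^p}^p$.

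The principal obstacle is the construction of the two-parameter Calder\'on--Zygmund decomposition in the second paragraph. In the product setting $\Omega_\alpha$ is an arbitrary open subset of $X_1\times X_2$ rather than a union of balls, so the one-parameter Whitney and stopping-time arguments used in the proof of~\cite[Proposition~9.1(v)]{HMMc} are unavailable. The remedy is to combine Journ\'e's covering lemma~\cite{J1}, which controls the enlargements of $\Omega_\alpha$ needed for the off-diagonal pieces of the atoms $a_j^\alpha$, with the flexible rectangle-sum structure of Definition~\ref{def2.5}, which accommodates the fact that atoms of $H^1_{L_1,L_2}$ need not be supported on balls. The details of this construction are carried out in~\cite{CDLWY}.
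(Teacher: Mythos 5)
The paper itself contains no argument for Theorem~\ref{th2.7}: it only refers the reader to~\cite{CDLWY}, so there is no in-paper proof to measure your attempt against. Your outline is the natural (and surely the intended) route: identify $H^2_{L_1,L_2}$ with $L^2$ via~\eqref{e2.7}, perform a Calder\'on--Zygmund decomposition of $f$ at each height $\alpha$ relative to the square function $S(f)$, bound the good part in $L^2$ and the bad part in $H^1_{L_1,L_2}$ through its atomic decomposition, and integrate the resulting distributional inequality; the layer-cake computation you describe does close for $1<p<2$, and $p=2$ is immediate from~\eqref{e2.7}.

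As a proof, however, your proposal has a genuine gap, and it sits exactly where you locate the ``principal obstacle'': the existence of the splitting $f=g_\alpha+b_\alpha$ with $\|g_\alpha\|_{L^2}^2\lesssim\int_{\{S(f)\le\alpha\}}S(f)^2\,d\mu+\alpha^2\mu(\Omega_\alpha)$ and $b_\alpha=\sum_j\lambda_j^\alpha a_j^\alpha$ a sum of $(H^1_{L_1,L_2},2,N)$-atoms with $\sum_j|\lambda_j^\alpha|\lesssim\int_{\Omega_\alpha}S(f)\,d\mu$ is asserted and then deferred to~\cite{CDLWY}. In the two-parameter setting this construction is essentially the whole content of the theorem: one must lift $f$ by the Calder\'on reproducing formula, carry out the Chang--Fefferman-type decomposition over dyadic rectangles $R$ with $\mu(R\cap\Omega_\alpha)>\mu(R)/2$, verify (using the quadratic estimate~\eqref{e2.7} and the finite propagation speed property) that the projected pieces genuinely satisfy conditions (i)--(iii) of Definition~\ref{def2.5}, including the support condition in $10R$ and the weighted $\ell^2$ bound on the functions $b_R$, and control the enlargements via Journ\'e's lemma and the strong maximal theorem. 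None of this is carried out, so your write-up ends up in the same position as the paper, outsourcing the real work to~\cite{CDLWY}. Two smaller points: passing from $\|Ta_j^\alpha\|_{L^1}\le C_1$ to a bound on $|Tb_\alpha|$ for a merely sublinear $T$ requires the limiting argument of Lemma~\ref{le5.1} (the atomic sum converges in $L^2$ and $T$ is $L^2$-bounded), which you should state; and the displayed inequality~\eqref{e2.9} as printed has the $H^p$ and $L^p$ norms on the wrong sides --- your reading, $\|Tf\|_{L^p}\le C\|f\|_{H^p_{L_1,L_2}}$, is the intended one.
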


For more detail on Hardy spaces associated to operators in the
one-parameter and multiparameter settings, see also~\cite{AMR,
CDLWY, DSTY, DLY, DY2, HLMMY, HM, HMMc} and the references
therein.

\medskip

\section{Multivariable spectral multiplier
theorems}\label{sec:multipliertheorems}
\setcounter{equation}{0}

In this section we state our main results, which show that
restriction type estimates together with the finite propagation
speed property can be used to obtain Marcinkiewicz-type
multiplier theorems on Hardy and Lebesgue spaces. We defer the
proofs to Section~\ref{sec:proofofmainthms}.
We assume that the metric measure spaces $X_i$ satisfy the
doubling condition~\eqref{doubling} with exponent~$n_i$, for $i
= 1$, 2.

In what follows, we shall be working with a nontrivial
auxiliary function $\phi$ with compact support. Let $\phi$ be a
$C_0^\infty(\mathbb{R})$ function such that
\begin{eqnarray}\label{e3.1}
    \support \phi \subseteq (1,4)
    \ \ \ {\rm and}\ \ \
    \sum_{\ell\in\mathbb{Z}}\phi(2^{-\ell}\lambda) = 1
    \ \ \ {\rm for\ all}\ \ \lambda > 0.
\end{eqnarray}
Set
\begin{eqnarray}\label{e3.2}
    \eta_{1}(\lambda_1)
    := \phi(\lambda_1),
    \ \  \eta_{2}( \lambda_2)
    := \phi(\lambda_2)
    \ \ \ {\rm and}\ \ \
    \eta_{(1,2)}(\lambda_1, \lambda_2)
    := \phi(\lambda_1)\phi(\lambda_2).
\end{eqnarray}

The aim of this paper is to prove the following two
Marcinkiewicz-type theorems, which involve restriction type
estimates of the form $({\rm{ {ST}}}_{p_i,2}^{2})$ and~$({\rm{
{ST}}}_{p_i,2}^{\infty})$ respectively, together with
corresponding Sobolev conditions.

\begin{theorem}\label{th3.1}
    Let $X_i$, $i = 1$, 2, be metric measure spaces satisfying
    the doubling condition~\eqref{doubling} with
    exponent~$n_i$. Let $L_i$, $i = 1$, 2, be nonnegative
    self-adjoint operators on $L^2(X_i)$ satisfying the finite
    propagation speed property ${\rm (FS)}$ and restriction
    type estimates $({\rm{{ST}}}_{p_i, 2}^{2})$ for some $p_i$
    with $1\leq p_i<2$. Suppose $s_1 > n_1/2$ and $s_2 >
    n_2/2$. Let $F$ be a bounded Borel function satisfying the
    following Sobolev conditions with respect to the $L^2$
    norm: the mixed Sobolev condition
    \begin{eqnarray}\label{e3.5}
        \sup_{t_1, t_2>0} \big\|\eta_{(1,2)}\delta_{(t_1, t_2)}F
            \big\|_{W^{(s_1,s_2),2}(\mathbb R\times {\mathbb R})}
            < \infty,
    \end{eqnarray}
    and the two one-variable Sobolev conditions
    \begin{eqnarray}\label{e3.3}
        \sup_{t_1>0} \big\|\eta_{1}\delta_{(t_1, 1)}F(\cdot,0)
        \big\|_{W^{s_1, 2}(\mathbb R)}
        < \infty,
    \end{eqnarray}
    and
    \begin{eqnarray}\label{e3.4}
        \sup_{t_2>0} \big\|\eta_{2}\delta_{(1, t_2)}F(0,\cdot)
        \big\|_{W^{s_2, 2}(\mathbb R)}
        < \infty.
    \end{eqnarray}
    Then

    \smallskip

    \begin{itemize}
        \item[(i)] the operator $F( L_1, L_2)$ extends to a
            bounded operator from $H_{L_1, L_2}^1(
            {X_1\times X_2} )$ to $L^1( {X_1\times X_2} )$,
            and

        \smallskip

        \item[(ii)]  $F( L_1, L_2)$ is bounded on $L^p(
            {X_1\times X_2} ) $ for all $p\in
            (p_{\max},p_{\max}')$, where $p_{\max} :=
            \max\{p_1,p_2\}$.
    \end{itemize}
\end{theorem}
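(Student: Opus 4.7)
The plan is to first prove part~(i), the $H^1_{L_1,L_2}\to L^1$ boundedness, by testing $F(L_1,L_2)$ on $(H^1_{L_1,L_2},2,N)$-atoms (Definition~\ref{def2.5}), and then to derive part~(ii) from Proposition~\ref{prop2.3} combined with the Marcinkiewicz-type interpolation theorem (Theorem~\ref{th2.7}) and duality. To treat an atom $a=\sum_{R\in m(\Omega)}a_R$ with $a_R=(L_1^N\otimes L_2^N)b_R$ supported in $10R$, I would construct an enlargement $\widetilde{\Omega}\supset\Omega$ of comparable measure via Journ\'e's covering lemma adapted to the product setting of $X_1\times X_2$. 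The contribution of $F(L_1,L_2)a$ on $\widetilde{\Omega}$ is then controlled by Cauchy--Schwarz together with condition~(iii) of Definition~\ref{def2.5} and the $L^2$-boundedness of $F(L_1,L_2)$ (which follows at once from the spectral theorem and the $L^\infty$-bound on $F$ encoded in \eqref{e3.5}).

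The main effort goes into controlling $F(L_1,L_2)a$ on $\widetilde{\Omega}^c$, which reduces to off-diagonal estimates for each piece $a_R$. Using the partition of unity \eqref{e3.1}, I would write
\[
F(\lambda_1,\lambda_2)=\sum_{j,k\in\mathbb{Z}}F_{j,k}(\lambda_1,\lambda_2),\qquad F_{j,k}:=\phi(2^{-j}\lambda_1)\phi(2^{-k}\lambda_2)F(\lambda_1,\lambda_2),
\]
so that each $F_{j,k}$ is supported in a product of dyadic annuli of scales $2^j$ and $2^k$, and its rescaled mixed Sobolev norm is uniformly controlled by \eqref{e3.5}. The scales in which one of $j,k$ tends to $-\infty$ correspond to one spectral variable being effectively near zero; here I would replace the relevant piece of $F$ by its boundary value $F(\cdot,0)$ or $F(0,\cdot)$, whose one-variable Sobolev norms are bounded by \eqref{e3.3}--\eqref{e3.4}, and absorb the resulting error into the main mixed term via a standard telescoping/interpolation argument.

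For each $F_{j,k}$ I would then apply the two-parameter off-diagonal estimates of Section~\ref{sec:off-diagonal} (Lemmas~\ref{le4.1}--\ref{le4.2}), whose proofs combine the finite propagation speed property (FS) with the restriction type estimates $({\rm ST}^{2}_{p_i,2})$ to yield rapid decay of $F_{j,k}(L_1,L_2)f$ on product annuli lying far from the support of $f$, with the decay rate controlled by the mixed $L^2$-Sobolev norm of $F_{j,k}$. Summing these estimates against the product-dyadic decomposition of $\widetilde{\Omega}^c$ around each $R$, and using the extra $L_i^N$ cancellation built into $a_R=(L_1^N\otimes L_2^N)b_R$ to absorb the high-frequency sums in $(j,k)$ for $N$ sufficiently large in terms of $n_1,n_2$, I expect to obtain a uniform bound $\|F(L_1,L_2)a\|_{L^1(X_1\times X_2)}\lesssim 1$, which together with Definition~\ref{def2.6} yields part~(i).

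Part~(ii) should follow formally from part~(i): Theorem~\ref{th2.7} interpolates between the $H^1\to L^1$ bound and the trivial $L^2\to L^2$ bound to give $H^p_{L_1,L_2}\to L^p$ for $1<p\leq 2$; Proposition~\ref{prop2.3} identifies $H^p_{L_1,L_2}(X_1\times X_2)$ with $L^p(X_1\times X_2)$ on the range $p\in(p_{\max},2]$, and duality (noting that $\overline{F}$ satisfies the same hypotheses) extends this to $p\in(p_{\max},p_{\max}')$. The main obstacle will be the rigorous execution of the off-diagonal argument on $\widetilde{\Omega}^c$ with only $s_i>n_i/2$ derivatives on $F$: this sharp threshold is precisely what forces the use of the two-parameter restriction type machinery of Section~\ref{sec:off-diagonal} rather than the pointwise Gaussian estimates (and $(n_i+1)/2$ derivatives) of~\cite{DLY}, and the careful interplay between Journ\'e's lemma, the atomic cancellation, and the off-diagonal bounds is where I expect the real work to lie.
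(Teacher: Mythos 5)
Your overall architecture is the same as the paper's: reduce to a uniform $L^1$ bound on $(H^1_{L_1,L_2},2,N)$-atoms, split the integral into the enlarged set (handled by Cauchy--Schwarz, the $L^2$ bound coming simply from the hypothesis that $F$ is bounded, and the atom's size condition) and its complement, control the latter by the off-diagonal estimates of Section~\ref{sec:off-diagonal} built from (FS) and $({\rm ST}^2_{p_i,2})$, sum the geometric gains $(\ell(I)/\ell(I^*))^{2s-n_i}V(R)$ with Journ\'e's lemma (Lemma~\ref{le5.3}; note the enlargement itself is produced by two iterations of the strong maximal function, Journ\'e's lemma only enters in the summation), and get (ii) from Theorem~\ref{th2.7}, Proposition~\ref{prop2.3} and duality. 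However, two steps of your outline are not right as stated. First, you assign the atomic cancellation $a_R=(L_1^N\otimes L_2^N)b_R$ the job of ``absorbing the high-frequency sums in $(j,k)$''; this is backwards. Writing $F_{j,k}(L_1,L_2)a_R$ through $b_R$ costs a factor comparable to $(2^{j}\ell(I))^{2N}(2^{k}\ell(J))^{2N}$, which is a gain only when $2^{j}\ell(I)\lesssim1$ (resp.\ $2^{k}\ell(J)\lesssim1$); at high frequencies it is $\geq 1$ and useless, and convergence there must come from the off-diagonal decay with $s_i$ derivatives, via the factor $(2^{j}\ell(I))^{n_1(1/p_1-1/2)-s}$ with $s>n_1/2\geq n_1(1/p_1-1/2)$. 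The paper implements the low-frequency gain not by brute force but through the Hofmann--Mayboroda reproducing formula \eqref{e5.9}--\eqref{e5.12}, which converts the cancellation into multiplier factors $(1\!\!1-e^{-w^2L_1})^M$, $M>s/2$, yielding $\min\{1,(2^{\ell}\ell(I))^{2M}\}$; some device of this kind (or a careful splitting of the frequency sum as just described) is needed, and your sentence as written would fail.

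Second, your treatment of the regime where one dyadic index tends to $-\infty$ is where the real difficulty sits and a ``standard telescoping/interpolation argument'' does not resolve it: the decomposition $\sum_{j,k}\phi(2^{-j}\lambda_1)\phi(2^{-k}\lambda_2)$ reproduces $F$ only on $(0,\infty)^2$ and misses the spectral mass of $L_i$ at $\{0\}$, and moreover on the region where you are far from the support only in the first variable a naive triangle inequality over $k$ diverges (each piece has unit size), so you would need almost-orthogonality of the $\phi(2^{-k}\sqrt{L_2})$ blocks. The paper avoids both problems by \emph{not} decomposing the second variable on that region: it uses Proposition~\ref{prop4.1}(ii), whose right-hand side carries the norm $L^{\infty}_{\lambda_2}(W^{s+\varepsilon,2}_{\lambda_1})$, and then controls the supremum over all $\lambda_2$ (including $\lambda_2=0$) by Sobolev embedding with $1/2+\varepsilon$ derivatives in $\lambda_2$ from the mixed condition \eqref{e3.5} \emph{plus} the boundary-value term $\|\eta_1\delta_{(2^\ell,1)}F(\cdot,0)\|_{W^{s,2}}$; this is estimate \eqref{eq:one-variablecondition}, the one and only place where the one-variable conditions \eqref{e3.3}--\eqref{e3.4} are used (cf.\ Remark~\ref{rem:removecondition}). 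If you intend to keep the full two-parameter decomposition, you must supply an explicit argument for the $\lambda_2\to0$ (and $\lambda_1\to0$) part that produces exactly these boundary-value norms, together with an orthogonality argument for the sum over the un-damped frequency index; as it stands this step is a genuine gap. (Minor: the off-diagonal estimates you need are Propositions~\ref{prop4.1}--\ref{prop4.2}; Lemmas~\ref{le4.1}--\ref{le4.2} are the two-parameter restriction estimates feeding into them.)
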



\begin{theorem}\label{th3.2}
    Let $X_i$, $i = 1$, 2, be metric measure spaces satisfying
    the doubling condition~\eqref{doubling} with
    exponent~$n_i$. Let $L_i$, $i = 1$, 2, be nonnegative
    self-adjoint operators on $L^2(X_i)$ satisfying the finite
    propagation speed property ${\rm (FS)}$ and restriction
    type estimates $({\rm{{ST}}}_{p_i,2}^\infty)$ for some
    $p_i$ with $1 \leq p_i < 2$.  Suppose $s_1 > n_1/2$ and
    $s_2
    > n_2/2$. Let $F$ be a bounded Borel function satisfying the
    following Sobolev conditions with respect to the $L^\infty$
    norm: the mixed Sobolev condition
    \begin{eqnarray}\label{e3.8}
        \sup_{t_1, t_2 > 0} \big\|\eta_{(1,2)}\delta_{(t_1, t_2)}F
        \big\|_{W^{(s_1,s_2),\infty}(\mathbb R\times {\mathbb R})}
        < \infty,
    \end{eqnarray}
    and the two one-variable Sobolev conditions
    \begin{eqnarray}\label{e3.6}
        \sup_{t_1 > 0} \big\|\eta_{1}\delta_{(t_1, 1)}F(\cdot,0)
        \big\|_{W^{s_1, \infty}(\mathbb R)}
        < \infty,
    \end{eqnarray}
    and
    \begin{eqnarray}\label{e3.7}
        \sup_{t_2>0} \big\|\eta_{2}\delta_{(1, t_2)}F(0,\cdot)
        \big\|_{W^{s_2, \infty}(\mathbb R)}
        < \infty.
    \end{eqnarray}
    Then
    \smallskip
    \begin{itemize}
        \item[(i)] the operator $F( L_1, L_2)$ extends to a
            bounded operator  from $H_{L_1, L_2}^1( {X_1\times
            X_2} )$ to $L^1( {X_1\times X_2} )$, and

        \smallskip

        \item[(ii)]  $F( L_1, L_2)$ is bounded on $L^p(
            {X_1\times X_2} ) $ for all $p\in
            (p_{\max},p_{\max}')$, where $p_{\max} :=
            \max\{p_1,p_2\}$.
    \end{itemize}
\end{theorem}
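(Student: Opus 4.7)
The plan is to follow the same broad outline as for Theorem~\ref{th3.1}, adapted to the $L^{\infty}$-based Sobolev conditions \eqref{e3.8}--\eqref{e3.7} and the weaker restriction type estimates $({\rm ST}^{\infty}_{p_i,2})$. Using the dyadic partition of unity from~\eqref{e3.1}, I would decompose the bulk of the multiplier as
\[
    F(\lambda_1,\lambda_2)
    = \sum_{j_1,j_2\in\mathbb{Z}} F_{j_1,j_2}(\lambda_1,\lambda_2),
    \qquad
    F_{j_1,j_2}(\lambda_1,\lambda_2) := F(\lambda_1,\lambda_2)\,\phi(2^{-j_1}\lambda_1)\,\phi(2^{-j_2}\lambda_2),
\]
valid on $(0,\infty)^2$, with corrections on the coordinate axes governed by the one-variable conditions \eqref{e3.6}--\eqref{e3.7}, which reduce to the one-parameter spectral multiplier theorem of~\cite{COSY}. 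After the natural rescaling, each bulk piece $F_{j_1,j_2}$ has uniformly bounded $W^{(s_1,s_2),\infty}$ norm thanks to~\eqref{e3.8}.

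For part~(i), by Definitions~\ref{def2.5} and~\ref{def2.6} it suffices to establish the uniform bound $\|F(L_1,L_2)a\|_{L^1(X_1\times X_2)}\le C$ for every $(H^1_{L_1,L_2},2,N)$-atom $a=\sum_{R\in m(\Omega)}(L_1^N\otimes L_2^N)b_R$ supported in an open set $\Omega\subset X_1\times X_2$ of finite measure. I would split
\[
    \|F(L_1,L_2)a\|_{L^1}
    = \int_{\widetilde\Omega}|F(L_1,L_2)a|\,d\mu
      + \int_{(X_1\times X_2)\setminus\widetilde\Omega}|F(L_1,L_2)a|\,d\mu,
\]
where $\widetilde\Omega$ is an enlargement of~$\Omega$ obtained by independently dilating each maximal dyadic subrectangle of~$\Omega$. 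Journ\'e's covering lemma~\cite{J1} keeps $\mu(\widetilde\Omega)\lesssim\mu(\Omega)$, so on this region Cauchy--Schwarz together with the $L^2$-boundedness of $F(L_1,L_2)$ (immediate from $F\in L^\infty$ via the spectral theorem) and the normalization in Definition~\ref{def2.5}(iii) give a bound of order one. For the complement, I would apply the off-diagonal estimates developed in Section~\ref{sec:off-diagonal}, which follow from the finite propagation speed property ${\rm (FS)}$ together with the two-parameter restriction type estimates of Lemmas~\ref{le4.1} and~\ref{le4.2} (themselves derived from $({\rm ST}^{\infty}_{p_i,2})$). Applied piece by piece to each $F_{j_1,j_2}(L_1,L_2)(L_1^N\otimes L_2^N)b_R$ on annular regions at distance comparable to $2^{k_1}\ell(I_R)\times 2^{k_2}\ell(J_R)$ from~$R$, they yield polynomial decay of order~$N$ in each factor; summation over $(j_1,j_2)$, $(k_1,k_2)$ and $R$ (using~\eqref{e3.8} and Definition~\ref{def2.5}(iii)) produces the required uniform constant.

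Part~(ii) then follows by interpolation and duality: Theorem~\ref{th2.7} upgrades the $H^1_{L_1,L_2}\to L^1$ bound to $H^p_{L_1,L_2}\to L^p$ for all $p\in(1,2]$; Proposition~\ref{prop2.3} identifies $H^p_{L_1,L_2}(X_1\times X_2)$ with $L^p(X_1\times X_2)$ on the range $p\in(p_{\max},2]$; and applying the same argument to the adjoint multiplier $\overline{F}(L_1,L_2)$ extends the $L^p$ boundedness to $p\in[2,p_{\max}')$ by duality.

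The main obstacle is the off-diagonal estimate for the bulk pieces. Unlike in the $L^2$-based setting of Theorem~\ref{th3.1}, the $L^{\infty}$-Sobolev norm does not allow a direct use of Plancherel, so the finite propagation speed property must be exploited more delicately in order to localize the wave kernels of each $F_{j_1,j_2}(\sqrt{L_i})$ at the appropriate dyadic scale. The crucial step is then to aggregate the one-parameter restriction estimates $({\rm ST}^{\infty}_{p_i,2})$ into a genuinely two-parameter estimate via Lemmas~\ref{le4.1}--\ref{le4.2} that supplies enough geometric decay in each factor to absorb the two-parameter loss contributed by Journ\'e's covering lemma, while still retaining summability in $(j_1,j_2)$ using only $s_i>n_i/2$ derivatives.
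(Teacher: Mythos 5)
Your proposal follows the same approach as the paper; indeed the paper's own proof of Theorem~\ref{th3.2} is a one-liner instructing the reader to repeat the proof of Theorem~\ref{th3.1} with Proposition~\ref{prop4.2} substituted for Proposition~\ref{prop4.1}, which is essentially what you outline (reduction to atoms via Lemma~\ref{le5.1}, splitting via the Journ\'e-expanded region, off-diagonal estimates on the complement, then interpolation via Theorem~\ref{th2.7} and Proposition~\ref{prop2.3} plus duality). One minor inaccuracy worth correcting: the one-variable conditions \eqref{e3.6}--\eqref{e3.7} do not enter as a separate axis correction reducing to a one-parameter multiplier theorem, but rather because the $L^{\infty}_{\lambda_2}$-norm appearing in Proposition~\ref{prop4.2}(ii)--(iii) is a genuine supremum over all $\lambda_2 \ge 0$ including $\lambda_2 = 0$, so that the analogue of the Sobolev-embedding step \eqref{eq:one-variablecondition} explicitly produces the term $\big\|\eta_1\delta_{(2^\ell,1)}F(\cdot,0)\big\|_{W^{s,\infty}(\mathbb R)}$; similarly, the off-diagonal decay furnished by Proposition~\ref{prop4.2} is of order $s_i>n_i/2$ in each factor (coupled with the $\min\{1,(2^\ell w)^{2M}\}$ gain from the Hofmann--Mayboroda representation \eqref{e5.11}), not of order~$N$.
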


\medskip

As we have seen in Section~2.2, in the important special case
when $L_i$, $i = 1$, 2, are nonnegative self-adjoint operators
on $L^2(X_i)$ satisfying Gaussian estimates ${\rm (GE)}$, it
follows from Proposition~\ref{prop2.33} that estimates ${\rm
(FS)}$ and $({\rm  ST}_{1, 2}^{\infty})$ hold. Therefore in
this case one can omit the hypotheses of the finite propagation
speed property and restriction type estimates in
Theorem~\ref{th3.2}. We describe the details in
Corollary~\ref{coro3.3} below.

\medskip

\begin{corollary}\label{coro3.3}
    Let $X_i$, $i = 1$, 2, be metric measure spaces satisfying
    the doubling condition~\eqref{doubling} with
    exponent~$n_i$. Let $L_i$, $i = 1$, 2, be nonnegative
    self-adjoint operators on $L^2(X_i)$ satisfying Gaussian
    estimates ${\rm (GE)}$. Additionally, assume that $F$ is a
    bounded Borel function satisfying conditions \eqref{e3.8},
    \eqref{e3.6} and \eqref{e3.7} for some $s_1 > n_1/2$, $s_2
    > n_2/2$. Then the operator $F( L_1, L_2)$ extends to a
    bounded operator from $H_{L_1, L_2}^1( {X_1\times X_2} )$
    to $L^1({X_1\times X_2} )$. Hence by interpolation and
    duality, $F(L_1, L_2)$ is bounded on $L^p( {X_1\times
    X_2})$ for all $p$ such that $1 < p < \infty$.
\end{corollary}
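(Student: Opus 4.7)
The corollary is essentially a packaging of Theorem~\ref{th3.2} specialized to the case $p_1=p_2=1$, so the plan is to verify the hypotheses of Theorem~\ref{th3.2} in that special case and then extract the full $L^p$ range by interpolation and duality, as the statement suggests. First, I would observe that Gaussian estimates ${\rm (GE)}$ imply Davies--Gaffney estimates ${\rm (DG)}$ in an elementary way (integrate the pointwise bound against a characteristic function of a ball), and ${\rm (DG)}$ is equivalent to the finite propagation speed property ${\rm (FS)}$ as noted in Section~\ref{sec:finitespeed}. Thus ${\rm (FS)}$ is automatic.

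Second, I would invoke Proposition~\ref{prop2.33} to deduce the restriction type estimate $({\rm ST}^{\infty}_{1,2})$ from~${\rm (GE)}$. Indeed, Gaussian estimates on the heat kernel immediately yield, via the usual Gaussian cut-off argument, the bound $({\rm G}_{1,2})$ of Proposition~\ref{prop2.33}, which is equivalent to $({\rm ST}^{\infty}_{1,2})$. Since $({\rm ST}^{\infty}_{p, 2})$ is stable as $p$ decreases below~$2$ but here we directly obtain it for $p_1=p_2=1$, we are free to apply Theorem~\ref{th3.2} with this choice.

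With these hypotheses in hand, Theorem~\ref{th3.2} applies to any bounded Borel function $F$ satisfying conditions \eqref{e3.8}, \eqref{e3.6}, \eqref{e3.7} for some $s_1>n_1/2$, $s_2>n_2/2$, and yields that $F(L_1,L_2)$ extends to a bounded operator from $H^1_{L_1,L_2}(X_1\times X_2)$ to $L^1(X_1\times X_2)$. This gives part~(i). For part~(ii), note that $F$ is bounded, so by the spectral theorem $F(L_1,L_2)$ is bounded on $L^2(X_1\times X_2)=H^2_{L_1,L_2}(X_1\times X_2)$. The interpolation theorem (Theorem~\ref{th2.7}) then upgrades the $(H^1,L^1)$ and $(L^2,L^2)$ bounds to $H^p_{L_1,L_2}(X_1\times X_2)\to L^p(X_1\times X_2)$ boundedness for every $1<p\le 2$. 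Since $p_{\max}=1$ here, Proposition~\ref{prop2.3} gives $H^p_{L_1,L_2}(X_1\times X_2)=L^p(X_1\times X_2)$ for all $1<p\le 2$, so $F(L_1,L_2)$ is bounded on $L^p(X_1\times X_2)$ for $1<p\le 2$. Finally, because each $L_i$ is self-adjoint and the measure is $\sigma$-finite, the adjoint of $F(L_1,L_2)$ is $\overline{F}(L_1,L_2)$, which satisfies exactly the same Sobolev hypotheses as $F$; applying the above to $\overline{F}$ and dualizing yields $L^p$-boundedness for $2\le p<\infty$, covering the full range $1<p<\infty$.

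The main point, rather than an obstacle, is simply checking that Gaussian estimates really do feed cleanly into the hypotheses of Theorem~\ref{th3.2} with $p_1=p_2=1$; the rest is machinery already developed in Sections~\ref{sec:hardy}--\ref{sec:multipliertheorems}. No additional estimates on~$F$ or on the operators beyond those in the statement are needed.
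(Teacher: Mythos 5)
Your proposal is correct and follows essentially the same route as the paper: the remarks preceding the corollary note that ${\rm (GE)}$ yields ${\rm (FS)}$ (via ${\rm (DG)}$) and, by Proposition~\ref{prop2.33}, the estimate $({\rm ST}^{\infty}_{1,2})$, so the paper's one-line proof is exactly your chain of Theorem~\ref{th3.2} with $p_1=p_2=1$, followed by Theorem~\ref{th2.7}, Proposition~\ref{prop2.3}, and duality. Your spelled-out verification of the hypotheses and of the duality step matches the intended argument.
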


\begin{proof}
Corollary~\ref{coro3.3} follows from
Theorem~\ref{th3.2} and Proposition~\ref{prop2.3}.
\end{proof}

\medskip

The proofs of Theorems~\ref{th3.1} and ~\ref{th3.2} are given
in Section~\ref{sec:proofofmainthms}; they rely on the
off-diagonal estimates established in
Section~\ref{sec:off-diagonal}.

\smallskip

We end this section by describing three types of nonnegative
self-adjoint operators~$L$ that satisfy property~(FS) together
with restriction type estimates $({\rm ST}^{2}_{p, 2})$ for
some $p$ with $1 \leq p < 2$.

\medskip

\noindent {\bf  1) Standard Laplace operator.} As mentioned in
the introduction, when $\Delta$ denotes the usual Laplacian on
${\mathbb R}^n$, $n\geq 2$, we have
\begin{equation}\label{e3.9}
    \big\|d E_{\sqrt{\Delta}}(\lambda)\big\|_{L^p(\RR^n) \to L^{p'}(\RR^n) }
    \leq C\lambda^{n(1/p - 1/(p')) - 1}, \ \ \ \ \lambda>0
\end{equation}
for $1\leq p \leq 2(n+1)/(n+3)$. Thus the restriction type
estimates ${({\rm ST}^{2}_{p, 2})} $ for the standard Laplace
operator on ${\mathbb R}^n$ are valid for $1\leq p\leq
{2(n+1)/(n+3)}.$

\medskip

\noindent {\bf  2)\ Schr\"odinger operators with inverse-square
potential.} Consider an inverse square potential $V(x) = c/| x
|^2$. Fix $n
> 2$ and assume that $c > -{(n-2)^2/4}$. Define $L := -\Delta +
V$ on $L^2(\RR^n, dx)$ by the quadratic form method. The
classical Hardy inequality
\begin{equation}\label{hardy1}
    - \Delta
    \geq  {(n-2)^2\over 4}|x|^{-2}
\end{equation}
shows that for all $c > -{(n-2)^2/4}$, the self-adjoint
operator $L$ is nonnegative. If $c \ge 0$, then the semigroup
$\exp(-tL)$ is pointwise bounded by the Gaussian semigroup and
hence acts on all $L^p$ spaces with $1 \le p \le \infty$. For
$c < 0$, set $p_c^{\ast} = n/\sigma$ where $\sigma =
(n-2)/2-\sqrt{(n - 2)^2/4 + c}$\, ; then $\exp(-tL)$ acts as a
uniformly bounded semigroup on $L^p(\RR^n)$ for $ p \in
((p_c^{\ast})', p_c^{\ast})$, and the range $((p_c^{\ast})',
p_c^{\ast})$ is optimal (see for example~\cite{LSV}).

It is known (see for instance~\cite{COSY}) that $L$ satisfies
restriction type estimates~${({\rm ST}^{2 }_{p, 2})} $ for all
$p\in [1,2n/(n + 2)]$ if $c \geq 0$, and for all $p \in
((p_c^{\ast})', 2n/(n + 2)]$ if $c < 0$.


\medskip

\noindent {\bf  3) Sub-Laplacians on homogeneous groups.} Let
${\bf G}$ be a Lie group of polynomial growth and let $X_1,
\ldots, X_k$ be a system of left-invariant vector fields on
${\bf G}$ satisfying the H\"ormander condition. We define the
Laplace operator $L$ acting on $L^2({\bf G})$ by the formula
\begin{eqnarray}
    L
    := -\sum_{i=1}^k X_i^2.
    \label{e3.12}
\end{eqnarray}
If $B(x,r)$ is the ball defined by the distance associated with
the system $X_1, \ldots, X_k$, then there exist natural numbers
$n_0, n_{\infty}\geq 0$ such that $V(x, r) \sim r^{n_0}$ for
$r\leq 1$ and  $V(x, r)\sim r^{n_{\infty}}$ for $r>1$ (see for
example~\cite[Chapter III.2]{VSC}). It follows that the
doubling condition~\eqref{doubling} holds with exponent $n =
\max\{n_0,{n_{\infty}} \}$, that is,
\[
V(x, \lambda r)
    \le C \lambda^{\max\{n_0,{n_{\infty}}\}} V(x,r) \qquad
    {\text{for all}}\, x \in {\bf G},\, \lambda \ge 1, \, r > 0.
\]

We call ${\bf G}$ a \emph{homogeneous group} if there exists a
family of dilations on ${\bf G}$. A family of dilations on a
Lie group ${\bf G}$ is a one-parameter group $({\tilde\delta
}_t)_{t>0}$ (thus ${\tilde\delta }_t \circ {\tilde\delta }_s =
{\tilde\delta }_{t s}$) of automorphisms of ${\bf G}$
determined by
\[
    {\tilde\delta }_t Y_j = t^{n_j} Y_j ,
\]
where $Y_1, \ldots, Y_{\ell}$ is a linear basis of the Lie
algebra of ${\bf G}$ and $n_j\geq 1$ for $1\leq j\leq \ell$
(see \cite{FS}). We say that an operator $L$  defined by
(\ref{e3.12}) is homogeneous if  ${\tilde\delta }_t X_i = t
X_i$ for $1\leq i\leq k$ and the system $X_1, \ldots, X_k$
satisfies the H\"ormander condition. Then for the
sub-Riemannian geometry corresponding to the system  $X_1,
\ldots, X_k$, one has $n_0 = n_{\infty} = \sum_{j=1}^{\ell}
n_j$ (see \cite{FS}). Hence in a homogeneous group, $n =
\max\{n_0,{n_{\infty}}\} = n_0 = n_{\infty}$. It is well known
that the heat kernel corresponding to the sub-Laplacian~$L$ on
a homogeneous group satisfies Gaussian estimates~${\rm (GE)}$.
It is also not difficult to check that for some constant $C >
0$,
$$
    \big\|F(\sqrt L)\big\|_{L^2(G) \to L^\infty(G)}^2
    = C \int_0^\infty |F(t)|^2 t^{n-1}\,
dt.
$$
See for example \cite[equation~(7.1)]{DOS} or~\cite[Proposition
10]{Ch2}. It follows from the above equality that the
operator~$L$ satisfies estimate ${({\rm ST}^{2 }_{1, 2})}$.

\smallskip

See also Section~\ref{sec:applications} for applications of
Theorems~\ref{th3.1} and~\ref{th3.2} and
Corollary~\ref{coro3.3}.

\medskip

\section{Off-diagonal estimates for multivariable spectral multipliers}
\label{sec:off-diagonal}
\setcounter{equation}{0}

In this section we show that restriction type estimates $({\rm
ST}_{p_i,2}^2)$ or~$({\rm ST}_{p_i,2}^\infty)$ can be used to
obtain off-diagonal estimates on spectral multipliers in the
abstract setting of self-adjoint operators acting on product
metric measure spaces. As usual, we assume that the metric
measure spaces $X_i$, $i = 1$, 2, satisfy the doubling
condition~\eqref{doubling} with exponent~$n_i$. We use these
off-diagonal estimates in the proofs of Theorems~\ref{th3.1}
and~\ref{th3.2}, in Section~\ref{sec:proofofmainthms}.

We begin with two lemmas showing that the estimates $({\rm
ST}_{p_i,2}^2)$ or~$({\rm ST}_{p_i,2}^\infty)$ imply
corresponding two-parameter restriction type estimates.

\begin{lemma}\label{le4.1}
Assume that the nonnegative self-adjoint operators $L_i$, $i =
1$, 2, satisfy restriction type estimates $({\rm{
{ST}}}_{p_i,2}^2)$ for some $p_i$ with $1 \leq p_i < 2$. Then
we have the following two-parameter restriction type estimates:
\begin{itemize}
    \item[(i)] For every $R_1,R_2>0$ and all Borel
        functions $F$ such that $\support F\subseteq
        [0,R_1]\times[0,R_2]$,
        \begin{eqnarray*}
            \big\|F\big(\sqrt{L_1},\sqrt{L_2}\big)
                P_{B_1(x_1,r_1)\times B_2(x_2,r_2)}\big\|_{L^{p_1}(X_1;
                \, L^{p_2}(X_2))\to L^2(X_1\times X_2)}
            \leq C\prod_{i=1}^2 \left({ (R_ir_i)^{n_i}\over  V_i(x_i,r_i))}\right)^{1/p_i - 1/2}
                \big\|\delta_{(R_1,R_2)} F\big\|_{L^2({\mathbb R}^2)}
        \end{eqnarray*}
        for all $x_i\in X_i$ and all $r_i\geq 1/R_i$, $i=1$, 2.

    \item[(ii)] For every $R_1>0$ and all Borel functions $F$
        such that $\support F\subseteq [0,R_1]\times [0,
        \infty)$,
        \begin{eqnarray*}
            \big\|F\big(\sqrt{L_1},\sqrt{L_2}\big)P_{B_1(x_1,r_1)\times X_2}\big\|_{L^{p_1}(X_1;
                \, L^2(X_2))\to L^2(X_1\times X_2)}
            \leq C \left({ (R_1r_1)^{n_1}\over  V_1(x_1,r_1)}\right)^{1/p_1 - 1/2}
                \big\|\delta_{(R_1,1)} F \big\|_{L^{\infty}_{\la_2}({\mathbb R};
                \, L^2_{\la_1}({\mathbb R}))}
        \end{eqnarray*}
        for all $x_1\in X_1$ and all $r_1\geq 1/R_1$.

    \item[(iii)] For every $R_2>0$ and all Borel functions $F$
        such that $\support F\subseteq [0,
        \infty)\times[0,R_2]$,
        \begin{eqnarray*}
            \big\|F\big(\sqrt{L_1},\sqrt{L_2}\big)P_{X_1\times B_2(x_2,r_2)}\big\|_{L^{p_2}(X_2;
                \, L^2(X_1))\to L^2(X_1\times X_2)}
            \leq C\left({  (R_2r_2)^{n_2} \over V_2(x_2,r_2)}\right)^{1/p_2 - 1/2}
                \big\|\delta_{(1,R_2)} F \big\|_{L^{\infty}_{\la_1}({\mathbb R}; \,
                L^2_{\la_2}({\mathbb R}))}
        \end{eqnarray*}
        for all $x_2\in X_2$ and all $r_2\geq 1/R_2$.
\end{itemize}
\end{lemma}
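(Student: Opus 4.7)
The three statements have parallel proofs; the plan is to treat~(i) in detail, with~(ii) and~(iii) arising as streamlined versions where only one application of the restriction estimate is needed. The core strategy is to iterate the scalar estimate~$({\rm ST}^{2}_{p_i, 2})$ in each variable, exploiting the fact that $L_1$ and~$L_2$ act on distinct factors and therefore commute. The chief obstacle is that $F$ does not separate as a product in~$(\lambda_1, \lambda_2)$, which prevents a naive successive application of the two scalar estimates; this is resolved by combining a direct-integral (spectral-fiber) decomposition with a Hilbert-space-valued extension of the scalar estimate.

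First, I would upgrade $({\rm ST}^{2}_{p_i,2})$ to a vector-valued form: for any separable Hilbert space~$E$, any Borel function~$G$ with $\support G \subseteq [0, R_i]$, and any $h \in L^{p_i}(X_i; E)$ with $\support h \subseteq B_i(x_i, r_i)$,
$$
    \big\|G(\sqrt{L_i})\, h\big\|_{L^2(X_i; E)}
    \leq C \bigg(\frac{(R_i r_i)^{n_i}}{V_i(x_i, r_i)}\bigg)^{1/p_i - 1/2}
        \big\|G(R_i \cdot)\big\|_{L^2(\mathbb{R})}\, \|h\|_{L^{p_i}(X_i; E)}.
$$
This follows by expanding $h$ in an orthonormal basis of~$E$, applying the scalar estimate to each coordinate, summing squares, and invoking Minkowski's inequality (for which $p_i \leq 2$ is essential) to swap $\ell^2_j$ with $L^{p_i}_{x_i}$. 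Next, the spectral theorem gives a unitary identification $L^2(X_2) \cong \int^{\oplus} H_{\lambda_2}\, d\nu(\lambda_2)$ under which $\sqrt{L_2}$ acts as multiplication by $\lambda_2$; tensoring with $L^2(X_1)$ yields $L^2(X_1 \times X_2) \cong \int^{\oplus} L^2(X_1; H_{\lambda_2})\, d\nu(\lambda_2)$. Writing $g := P_{B_1 \times B_2} f = \int^{\oplus} g_{\lambda_2}\, d\nu$ (each fiber $g_{\lambda_2}$ inheriting support in~$B_1$), the operator $F(\sqrt{L_1}, \sqrt{L_2})$ acts fiberwise as $F(\sqrt{L_1}, \lambda_2)$, giving the Plancherel identity
$$
    \big\|F(\sqrt{L_1}, \sqrt{L_2})\, g\big\|^2_{L^2(X_1 \times X_2)}
    = \int \big\|F(\sqrt{L_1}, \lambda_2)\, g_{\lambda_2}\big\|^2_{L^2(X_1; H_{\lambda_2})}\, d\nu(\lambda_2).
$$

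Applying the vector-valued estimate with $E = H_{\lambda_2}$ (noting $\support F(\cdot, \lambda_2) \subseteq [0, R_1]$) bounds the integrand by $C^2 \alpha_1^2 \|F(R_1 \cdot, \lambda_2)\|^2_{L^2_{\lambda_1}} \|g_{\lambda_2}\|^2_{L^{p_1}(X_1; H_{\lambda_2})}$, where $\alpha_1 := ((R_1 r_1)^{n_1}/V_1(x_1, r_1))^{1/p_1 - 1/2}$. Setting $G(\lambda_2) := \|F(R_1 \cdot, \lambda_2)\|_{L^2_{\lambda_1}(\mathbb{R})}$ (which is supported in~$[0, R_2]$), I would apply Minkowski's inequality (for $p_1 \leq 2$) in the form $\|\cdot\|_{L^2(\nu; L^{p_1}(X_1))} \leq \|\cdot\|_{L^{p_1}(X_1; L^2(\nu))}$ to swap the integrations, after which the inner $L^2(\nu)$-norm is recognized via the fiber identity $\int |G(\lambda_2)|^2 \|g_{\lambda_2}(x_1)\|^2_{H_{\lambda_2}}\, d\nu = \|G(\sqrt{L_2})\, g(x_1, \cdot)\|^2_{L^2(X_2)}$. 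A final application of the scalar $({\rm ST}^{2}_{p_2, 2})$ to $g(x_1, \cdot) \in L^{p_2}(X_2)$ (supported in~$B_2$), pointwise in~$x_1$, produces the factor $\alpha_2 \|G(R_2 \cdot)\|_{L^2(\mathbb{R})}$; Fubini then gives $\|G(R_2 \cdot)\|_{L^2(\mathbb{R})} = \|\delta_{(R_1, R_2)} F\|_{L^2(\mathbb{R}^2)}$, completing~(i). Parts~(ii) and~(iii) proceed identically through the direct-integral setup, but after the single vector-valued restriction one simply pulls out $\sup_{\lambda_2} \|F(R_1 \cdot, \lambda_2)\|_{L^2_{\lambda_1}}$, and a Minkowski swap combined with Plancherel for the unrestricted factor converts $\int \|g_{\lambda_2}\|^2_{L^{p_1}(X_1; H_{\lambda_2})}\, d\nu$ into $\|g\|^2_{L^{p_1}(X_1; L^2(X_2))}$; no second restriction estimate is needed. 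The main technical hurdle is the careful setup of the direct-integral picture---verifying that $F(\sqrt{L_1}, \sqrt{L_2})$ acts fiberwise as claimed, that the support of $g$ descends to each fiber, and that all Minkowski swaps are valid---rather than the restriction estimates themselves, which are used almost mechanically once this framework is in place.
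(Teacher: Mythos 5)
Your proof is correct, but it takes a genuinely different route from the paper's. The paper argues by approximation of the joint spectral integral: it writes $F\big(\sqrt{L_1},\sqrt{L_2}\big)$ as a limit of Riemann sums of terms $F\big(\tilde\lambda^{(1)}_{\ell_1},\tilde\lambda^{(2)}_{\ell_2}\big)\,\chi_{(\lambda^{(1)}_{\ell_1},\lambda^{(1)}_{\ell_1+1}]}\big(\sqrt{L_1}\big)\,\chi_{(\lambda^{(2)}_{\ell_2},\lambda^{(2)}_{\ell_2+1}]}\big(\sqrt{L_2}\big)$, uses orthogonality of these spectral pieces to dominate the squared $L^2$ norm by the sum of $|F|^2$ times the squared norms of the pieces, applies the scalar $({\rm ST}^{2}_{p_i,2})$ to each characteristic function (which produces the factors $\big((\lambda^{(i)}_{\ell_i+1}-\lambda^{(i)}_{\ell_i})/R_i\big)^{1/2}$), and identifies the limiting double Riemann sum with $\|\delta_{(R_1,R_2)}F\|_{L^2(\mathbb{R}^2)}^2$; for (ii)--(iii) it pulls out the supremum in the unrestricted variable and uses the same Minkowski inequality (valid since $p_1\le 2$) that you use, to resum $\sum_{\ell_2}\big\|\chi_{(\lambda^{(2)}_{\ell_2},\lambda^{(2)}_{\ell_2+1}]}\big(\sqrt{L_2}\big)g\big\|^2_{L^{p_1}(X_1;\,L^2(X_2))}\le C\|g\|^2_{L^{p_1}(X_1;\,L^2(X_2))}$. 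You instead diagonalize $L_2$ by a direct integral, prove a Hilbert-space-valued version of $({\rm ST}^{2}_{p_1,2})$, and then run the second restriction estimate on $G(\lambda_2)=\|F(R_1\cdot,\lambda_2)\|_{L^2_{\lambda_1}(\mathbb{R})}$. What your route buys is that a general non-product Borel $F$ is handled directly, with Fubini giving $\|G(R_2\cdot)\|_{L^2(\mathbb{R})}=\|\delta_{(R_1,R_2)}F\|_{L^2(\mathbb{R}^2)}$ and no limiting argument, whereas the paper must pass to the limit of Riemann sums; the price is the measurable-field bookkeeping you yourself flag (fiberwise action of the joint functional calculus, measurability in $\lambda_2$ of the fibers $g_{\lambda_2}$ and of their norms), all of which the paper's more elementary argument avoids. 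One small caveat: in (ii) your pulled-out $\sup_{\lambda_2}$ should be understood as a supremum over the spectrum of $\sqrt{L_2}$ dominated by the $L^{\infty}_{\lambda_2}(\mathbb{R};L^2_{\lambda_1}(\mathbb{R}))$ norm in the statement; this is exactly the interpretation the paper's own Riemann-sum bound uses, so it is not a gap in your argument.
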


\begin{proof}
Suppose that $F$ is a Borel function with $\support F\subseteq
[0,R_1]\times[0,R_2]$.  For $i = 1$, 2, we consider Riemann
partitions of~$[-1, R_i]$:
$$
-1 = \lambda^{(i)}_1 < \lambda^{(i)}_2 < \ldots < \lambda^{(i)}_{n_i} = R_i, \ \ \ \ \
\Delta^{(i)}_{\ell_i} := \la^{(i)}_{\ell_i+1} - \la^{(i)}_{\ell_i}.
$$
Let $  {\tilde \la}^{(i)}_{\ell_i}\in(\la^{(i)}_{\ell_i},
\la^{(i)}_{\ell_i+1}].$ Taking the limit of the Riemann
approximation (see~\cite[page 310]{Yo}),
\begin{eqnarray*}
    F\big(\sqrt{L_1},\sqrt{L_2}\big)
    &=& \lim_{\substack{ \Delta^{(1)}_{\ell_1}\to 0\\ \Delta^{(2)}_{\ell_2}\to 0}}
        \sum_{\ell_1=1}^{n_1} \sum_{\ell_2=1}^{n_2}
        F({\tilde \la}^{(1)}_{\ell_1} ,{\tilde \la}^{(2)}_{\ell_2})
        \chi_{(\la^{(1)}_{\ell_1}, \la^{(1)}_{\ell_1 +1}  ]}\big(\sqrt{L_1}\big)
        \chi_{(\la^{(2)}_{\ell_2}, \la^{(2)}_{\ell_2 +1}  ]}\big(\sqrt{L_2}\big),
\end{eqnarray*}
which gives
\begin{eqnarray} \label{e4.1}
    &&\big\|F\big(\sqrt{L_1},\sqrt{L_2}\big)
        P_{B_1(x_1,r_1)\times B_2(x_2,r_2)}g\big\|^2_{L^2(X_1\times X_2)}\nonumber \\
    &&\hskip.5cm\leq \lim_{\substack{ \Delta^{(1)}_{\ell_1}\to 0\\
        \Delta^{(2)}_{\ell_2}\to 0}}
        \sum_{\ell_1=1}^{n_1} \sum_{\ell_2=1}^{n_2}
        |F({\tilde \la}^{(1)}_{\ell_1} ,{\tilde\la}^{(2)}_{\ell_2})|^2 \
        \big\|\chi_{(\la^{(1)}_{\ell_1},\la^{(1)}_{\ell_1+1} ]}\big(\sqrt{L_1}\big)
        \chi_{(\la^{(2)}_{\ell_2}, \la^{(2)}_{\ell_2+1} ]}\big(\sqrt{L_2}\big)
        P_{B_1(x_1,r_1)\times B_2(x_2,r_2)}g\big\|_{L^2(X_1\times X_2)}^2.
\end{eqnarray}
We then apply the restriction type estimates
$({\rm{{ST}}}_{p_i,2}^2)$ for $L_i$ to obtain
\begin{eqnarray*}
    &&\big\|\chi_{(\la^{(1)}_{\ell_1}, \la^{(1)}_{\ell_1+1}  ]}\big(\sqrt{L_1}\big)
        \chi_{(\la^{(2)}_{\ell_2}, \la^{(2)}_{\ell_2+1}  ]}\big(\sqrt{L_2}\big)
        P_{B_1(x_1,r_1)\times B_2(x_2,r_2)}
        \big\|_{L^{p_1}(X_1; \, L^{p_2}(X_2))\to L^2(X_1\times X_2)} \\
    &&\hskip.5cm\leq
        \prod_{i=1}^2
        \big\|\chi_{(\la^{(i)}_{\ell_i}, \la^{(i)}_{\ell_i+1} ]}
        \big(\sqrt{L_i}\big) P_{B_i(x_i,r_i)}
        \big\|_{L^{p_i}(X_i)\to L^2(X_i)} \\
    &&\hskip.5cm\leq  C\prod_{i=1}^2\left(\frac{(R_ir_i)^{n_i}}{{ V(x_i,r_i) }}\right)^{1/p_i-1/2}
        \sqrt{\frac{\la^{(i)}_{\ell_i + 1} - \la^{(i)}_{\ell_i}}{R_i}}.
\end{eqnarray*}
Substituting this estimate back into inequality~\eqref{e4.1}
and taking the limit, we obtain item~(i).

Turning to item~(ii), suppose that $F$ is a Borel function with
$\support F\subseteq [0,R_1]\times [0, \infty)$. Consider
Riemann partitions of~$[-1, R_1)\times [-1, \infty)$:
$$
    -1 = \lambda^{(1)}_1 < \lambda^{(1)}_2 < \cdots
        < \lambda^{(1)}_{n_1} = R_1, \ \ \
    -1 = \lambda^{(2)}_1 < \lambda^{(2)}_2 < \cdots
        < \lambda^{(2)}_{\ell_2} < \lambda^{(2)}_{\ell_2+1} < \cdots .
$$
For every $i = 1$, 2, we set $ \Delta^{(i)}_{\ell_i} :=
\la^{(i)}_{\ell_i+1} - \la^{(i)}_{\ell_i} $ for $\ell_1 = 1, 2,
\ldots, n_1$ and $\ell_2 = 1, 2, \ldots$. Let $ {\tilde
\la}^{(i)}_{\ell_i}\in(\la^{(i)}_{\ell_i},
\la^{(i)}_{\ell_i+1}].$ Again, taking the limit of the Riemann
approximation, an argument as in item~(i) shows that
\begin{eqnarray} \label{e4.111}
    &&\big\|F\big(\sqrt{L_1},\sqrt{L_2}\big)P_{B_1(x_1,r_1)\times X_2}g
        \big\|^2_{L^2(X_1\times X_2)} \nonumber\\
    &&\hskip.5cm\leq \lim_{\substack{ \Delta^{(1)}_{\ell_1}\to 0\\
        \Delta^{(2)}_{\ell_2}\to 0}}
        \sum_{\ell_1=1}^{n_1} \sum_{\ell_2=1}^{\infty}
        |F({\tilde \la}^{(1)}_{\ell_1} ,{\tilde \la}^{(2)}_{\ell_2})|^2 \
        \big\|\chi_{(\la^{(1)}_{\ell_1}, \la^{(1)}_{\ell_1+1} ]}\big(\sqrt{L_1}\big)
        \chi_{(\la^{(2)}_{\ell_2}, \la^{(2)}_{\ell_2+1} ]}\big(\sqrt{L_2}\big)
        P_{B_1(x_1,r_1)\times X_2}g\big\|_{L^2(X_1\times X_2)}^2.
\end{eqnarray}
We then apply the restriction type estimates
$({\rm{{ST}}}_{p_1,2}^2)$ for $L_1$ and Minkowski's inequality
to obtain
\begin{eqnarray*}
    && \big\|\chi_{(\la^{(1)}_{\ell_1}, \la^{(1)}_{\ell_1+1}  ]}\big(\sqrt{L_1}\big)
      \chi_{(\la^{(2)}_{\ell_2}, \la^{(2)}_{\ell_2+1}  ]}\big(\sqrt{L_2}\big)
      P_{B_1(x_1,r_1)\times X_2} f\big\|^2_{  L^2(X_1\times X_2)} \\
      &&\hskip.5cm \leq
      \big\|\chi_{(\la^{(1)}_{\ell_1},
      \la^{(1)}_{\ell_1+1}  ]}\big(\sqrt{L_1}\big) P_{B_1(x_1,r_1)} \big\|^2_{L^{p_1}(X_1)\to L^2(X_1)} \
      \big\|\chi_{(\la^{(2)}_{\ell_2}, \la^{(2)}_{\ell_2+1}  ]}\big(\sqrt{L_2}\big)
      f \big\|^2_{L^{2}(X_2; \, L^{p_1}(X_1)) } \\
    &&\hskip.5cm \leq    C \left({(R_1r_1)^{n_1} \over  { V(x_1,r_1) }}\right)^{2/p_1-1 }
        \frac{\la^{(1)}_{\ell_1 + 1} - \la^{(1)}_{\ell_1}}{R_1} \
        \big\|\chi_{(\la^{(2)}_{\ell_2}, \la^{(2)}_{\ell_2+1}  ]}\big(\sqrt{L_2}\big)f \big\|^2_{L^{p_1}(X_1; \, L^{2}(X_2)) }.
\end{eqnarray*}
This, together with \eqref{e4.111}, yields
\begin{eqnarray*}
    &&\big\|F\big(\sqrt{L_1},\sqrt{L_2}\big)P_{B_1(x_1,r_1)\times X_2}g
        \big\|^2_{L^2(X_1\times X_2)} \\
    &&\hskip.5cm\leq \lim_{\substack{ \Delta^{(1)}_{\ell_1}\to 0\\
        \Delta^{(2)}_{\ell_2}\to 0}}
        \sum_{\ell_2=1}^{\infty}\left(\sum_{\ell_1=1}^{n_1}
        |F({\tilde \la}^{(1)}_{\ell_1} ,{\tilde \la}^{(2)}_{\ell_2})|^2
        \left({(R_1r_1)^{n_1} \over  { V(x_1,r_1) }}\right)^{2/p_1-1 }
        \frac{\la^{(1)}_{\ell_1 + 1} - \la^{(1)}_{\ell_1}}{R_1}
     \right)
     \left(
    \big\|\chi_{(\la^{(2)}_{\ell_2}, \la^{(2)}_{\ell_2+1}  ]}\big(\sqrt{L_2}\big)
        g\big\|_{L^{p_1}(X_1; \, L^{2}(X_2)) }^2\right)\nonumber \\
    &&\hskip.5cm\leq \lim_{ \Delta^{(2)}_{\ell_2}\to 0}
        \left({(R_1r_1)^{n_1} \over {V_1(x_1,r_1) }}\right)^{2/p_1-1}
        \big\|\delta_{(R_1,1)}F\big\|_{L_{\la_2}^\infty(\RR; \, L_{\la_1}^2(\RR))}^2
        \left(\sum_{\ell_2=1}^{\infty}
        \big\| \chi_{(\la^{(2)}_{\ell_2}, \la^{(2)}_{\ell_2+1}  ]}\big(\sqrt{L_2}\big)
        g\big\|_{L^{p_1}(X_1; \, L^{2}(X_2)) }^2\right)\nonumber \\
    &&\hskip.5cm \leq  \left({(R_1r_1)^{n_1} \over  {V_1(x_1,r_1) }} \right)^{2/p_1-1}
        \big\|\delta_{(R_1,1)}F\big\|_{L_{\la_2}^\infty(\RR; \, L_{\la_1}^2(\RR))}^2
        \|g\|_{L^{p_1}(X_1; \, L^2(X_2))}^2,
\end{eqnarray*}
where we have used the fact that
\begin{eqnarray*}
    \left(\sum_{\ell_2=1}^{\infty}
        \big\|\chi_{(\la^{(2)}_{\ell_2}, \la^{(2)}_{\ell_2+1}  ]}\big(\sqrt{L_2}\big)
        g\big\|_{L^{p_1}(X_1, L^{2}(X_2)) }^2\right)
    &\leq& \left(\int_{X_1} \left[  \sum_{\ell_2=1}^{\infty}
        \big\|\chi_{(\la^{(2)}_{\ell_2}, \la^{(2)}_{\ell_2+1}  ]}\big(\sqrt{L_2}\big)
        g\big\|_{ L^{2}(X_2) }^2 \right]^{p_1/2}\right)^{2/p_1}\nonumber \\
    &\leq& C\left(\int_{X_1} \|g\|_{ L^{2}(X_2) }^{p_1}\right)^{2/p_1}\nonumber \\
    &=& C\|g\|_{L^{p_1}(X_1; \, L^2(X_2))}^2
\end{eqnarray*}
by Minkowski's inequality. This proves item~(ii).

Item (iii) can be obtained by a symmetric argument. Hence the
proof of Lemma~\ref{le4.1} is complete.
\end{proof}

\begin{lemma}\label{le4.2}
Assume that the nonnegative self-adjoint operators $L_i$, $i =
1$, 2, satisfy restriction type estimates $({\rm{
{ST}}}_{p_i,2}^\infty)$ for some $p_i$ with $1 \leq p_i < 2$.
Then we have the following two-parameter restriction type
estimates:
\begin{itemize}
    \item[(i)] For every $R_1$, $R_2 > 0$ and all bounded
        Borel functions $F$ such that $\support F\subseteq
        [0,R_1]\times[0,R_2]$,
        \begin{eqnarray*}
            \big\|F\big(\sqrt{L_1},\sqrt{L_2}\big)P_{B_1(x_1,r_1)
                \times B_2(x_2,r_2)}\big\|_{L^{p_1}(X_1; \, L^{p_2}(X_2))\to L^2(X_1\times X_2)}
            \leq C\prod_{i=1}^2 \left({ (R_ir_i)^{n_i}\over  V_i(x_i,r_i)}\right)^{1/p_i-1/2}
                \big\|\delta_{(R_1,R_2)} F\big\|_{L^{\infty}({\mathbb R}^2)}
        \end{eqnarray*}
        for all $x_i\in X_i$ and all $r_i\geq 1/R_i$, $i=1$, 2.

    \item[(ii)] For every $R_1>0$ and all bounded Borel
        functions $F$ such that $\support F\subseteq
        [0,R_1]\times[0, \infty)$,
        \begin{eqnarray*}
            \big\|F\big(\sqrt{L_1},\sqrt{L_2}\big)P_{B_1(x_1,r_1)
                \times X_2}\big\|_{L^{p_1}(X_1; \, L^2(X_2))\to L^2(X_1\times X_2)}
            \leq C \left({ (R_1r_1)^{n_1}\over  V_1(x_1,r_1)}\right)^{1/p_1-1/2}
                \big\|\delta_{(R_1,1)} F\big\|_{L^{\infty}({\mathbb R}^2)}
        \end{eqnarray*}
        for all $x_1\in X_1$ and all $r_1\geq 1/R_1$.

     \item[(iii)] For every $R_2>0$ and all bounded Borel
         functions $F$ such that $\support F\subseteq [0,
         \infty)\times[0,R_2]$,
         \begin{eqnarray*}
            \big\|F\big(\sqrt{L_1},\sqrt{L_2}\big)P_{X_1\times
                B_2(x_2,r_2)}\big\|_{L^{p_2}(X_2; \, L^2(X_1))\to L^2(X_1\times X_2)}
            \leq C\left({  (R_2r_2)^{n_2} \over V_2(x_2,r_2)}\right)^{1/p_2-1/2}
                \big\|\delta_{(1,R_2)} F\big\|_{L^{\infty}({\mathbb R}^2)}
         \end{eqnarray*}
         for all $x_2\in X_2$ and all $r_2\geq 1/R_2$.
\end{itemize}
\end{lemma}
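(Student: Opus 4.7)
My plan is to follow the same template as Lemma~\ref{le4.1}, but to replace the Riemann-sum decomposition by a single clean factorization through the $L^2$-spectral theory, since the $L^\infty$-type restriction estimate $({\rm ST}^\infty_{p_i,2})$ bounds $\chi_{[0,R_i]}(\sqrt{L_i})P_{B_i}$ in a single stroke.

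For part~(i), since $\support F\subseteq[0,R_1]\times[0,R_2]$, the bounded Borel functional calculus of the commuting self-adjoint operators $\sqrt{L_1}$ and $\sqrt{L_2}$ yields the operator identity
\begin{align*}
F(\sqrt{L_1},\sqrt{L_2})
  = F(\sqrt{L_1},\sqrt{L_2})\,\chi_{[0,R_1]}(\sqrt{L_1})\,\chi_{[0,R_2]}(\sqrt{L_2}).
\end{align*}
Combined with $\|F(\sqrt{L_1},\sqrt{L_2})\|_{L^2\to L^2}\le\|F\|_{L^\infty}$, this reduces the task to bounding the $L^{p_1}(X_1; L^{p_2}(X_2))\to L^2(X_1\times X_2)$ norm of $\chi_{[0,R_1]}(\sqrt{L_1})\,\chi_{[0,R_2]}(\sqrt{L_2})\,P_{B_1(x_1,r_1)\times B_2(x_2,r_2)}$. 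Because each factor here acts on only one variable, the composition is the tensor product $A_1\otimes A_2$ with $A_i:=\chi_{[0,R_i]}(\sqrt{L_i})P_{B_i(x_i,r_i)}$. Applying $A_2$ in the $x_2$-slice and then $A_1$ in the $x_1$-slice, and invoking Minkowski's integral inequality to swap $L^{p_1}$ and $L^2$ (valid since $p_1\le 2$), I would obtain the tensor-product bound $\|A_1\otimes A_2\|\le \|A_1\|_{L^{p_1}(X_1)\to L^2(X_1)}\,\|A_2\|_{L^{p_2}(X_2)\to L^2(X_2)}$. Finally, applying $({\rm ST}^\infty_{p_i,2})$ to $\chi_{[0,R_i]}$ (whose $R_i$-dilate has $L^\infty$ norm $1$) produces the asserted geometric constant, and $\|F\|_{L^\infty}=\|\delta_{(R_1,R_2)}F\|_{L^\infty}$ closes the estimate.

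Parts~(ii) and~(iii) follow the identical three-step template, the only change being that the support of $F$ is unbounded in one variable and the corresponding spectral projection is replaced by the identity on that $X_i$. The tensor product bound then contributes the trivial factor $\|I_{X_i}\|_{L^2\to L^2}=1$, and only the single restriction estimate $({\rm ST}^\infty_{p_i,2})$ on the remaining variable is required. Again, $\|F\|_{L^\infty}$ equals the $L^\infty$ norm of the appropriate dilate, matching the stated right-hand side.

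The main obstacle, if any, is the mixed-norm tensor-product inequality in Step~2 when $1\le p_1,p_2<2$: it rests on Minkowski's integral inequality together with the fact that $A_1$ and $A_2$ act on genuinely separate factors, so that the joint spectral calculus of the commuting pair $(\sqrt{L_1},\sqrt{L_2})$ factors as stated. Apart from this bookkeeping, every ingredient is a direct consequence of the one-parameter hypothesis $({\rm ST}^\infty_{p_i,2})$ and the $L^2$-boundedness of the functional calculus.
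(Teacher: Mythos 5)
Your proposal is correct and follows essentially the same route as the paper: factor $F$ through one-variable spectral cutoffs supported in $[0,R_i]$, bound the joint multiplier by its $L^\infty$ norm on $L^2$, and control each cutoff composed with $P_{B_i}$ by the one-parameter estimate $({\rm ST}^\infty_{p_i,2})$ together with the Minkowski/mixed-norm tensor-product bound. The only (cosmetic) difference is that the paper uses $H_i(\lambda)=e^{-\lambda/R_i}\chi_{[0,R_i]}(\lambda)$ and $G=Fe^{\lambda_1/R_1}e^{\lambda_2/R_2}$ rather than the bare characteristic functions and $F$ itself, which changes nothing of substance.
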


\begin{proof}
Suppose that $F$ is a bounded Borel function with $\support
F\subseteq [0,R_1]\times[0,R_2]$.  We set
\[
    H_1(\la_1) := e^{-\la_1/R_1}\chi_{[0,R_1]}(\la_1), \quad
    H_2(\la_2) := e^{-\la_2/R_2}\chi_{[0,R_2]}(\la_2), \quad\text{and}\quad
    G(\la_1,\la_2) := F(\la_1,\la_2)e^{\la_1/R_1}e^{\la_2/R_2}.
\]
Then we have
$$
    F(\la_1,\la_2)
    = G(\la_1,\la_2)H_1(\la_1)H_2(\la_2).
$$
It is immediate that $\|G\|_{L^\infty(\mathbb R^2)}\leq
C\|F\|_{L^\infty(\mathbb R^2)}$. Since the operators $L_1$ and
$L_2$ satisfy restriction type estimates $({\rm{
{ST}}}_{p_1,2}^\infty)$ and $({\rm{ {ST}}}_{p_2,2}^\infty)$
respectively,  we have
\begin{eqnarray*}
     &&\hspace{-1.8cm}\big\|F\big(\sqrt{L_1},\sqrt{L_2}\big)P_{B_1(x_1,r_1)
        \times B_2(x_2,r_2)}\big\|_{L^{p_1}(X_1; \, L^{p_2}(X_2))\to L^2(X_1\times X_2)} \\
     &= &\bigg\|G\big(\sqrt{L_1},\sqrt{L_2}\big)
        \prod_{i=1}^2 H_i\big(\sqrt{L_i}\big) P_{B_i(x_i,r_i)}
        \bigg\|_{L^{p_1}(X_1; \, L^{p_2}(X_2))\to L^2(X_1\times X_2)}\nonumber\\
     &\leq& \big\|G\big(\sqrt{L_1},\sqrt{L_2}\big)\big\|_{{L^2(X_1\times X_2)\to L^2(X_1\times X_2)}}
        \prod_{i=1}^2\big\|H_i\big(\sqrt{L_i}\big) P_{B_i(x_i,r_i)}
        \big\|_{L^{p_i}(X_i)\to L^2(X_i)}\nonumber\\
     &\leq&  C\|G\|_{L^{\infty}({\mathbb R^2})}
        \prod_{i=1}^2\left[\left({(R_ir_i)^{n_i} \over  { V_i(x_i,r_i) }}\right)^{1/p_i-1/2}
        \big\|\delta_{R_i} H_i\big\|_{L^{\infty}({\mathbb R})}  \right]\nonumber\\
     &\leq&  C\prod_{i=1}^2\left({(R_ir_i)^{n_i} \over  { V_i(x_i,r_i) }}\right)^{1/p_i-1/2}
        \big\|\delta_{(R_1,R_2)} F\big\|_{L^{\infty}({\mathbb R}^2)}\notag,
\end{eqnarray*}
which implies (i) for the pair of operators $(L_1,L_2)$.


The proofs of (ii) and (iii) can be obtained by making minor
modifications to the argument for~(i). We omit the details.
This proves Lemma~\ref{le4.2}.
\end{proof}

Define
$$
    {\mathscr S}_{\rm even}
    := \left\{F:\RR\times\RR\to\CC : F(\la_1,\la_2)~\text{is
        even in each variable separately}
        \right\}.
$$

The aim of this section is to prove Propositions~\ref{prop4.1}
and~\ref{prop4.2}, which play important roles in the proofs of
our multivariable spectral multipliers results,
Theorems~\ref{th3.1} and~\ref{th3.2}, respectively. We begin
with Proposition~\ref{prop4.1}, which involves restriction type
estimates of the form~$({\rm{ {ST}}}_{p_i,2}^{2})$.

\begin{prop}\label{prop4.1}
    Suppose that the metric measure spaces $X_i$, $i = 1$, 2,
    satisfy the doubling condition~\eqref{doubling} with
    exponent~$n_i$. Further assume that $L_i$, $i = 1$, 2, are
    nonnegative self-adjoint operators satisfying the finite
    propagation speed property ${\rm (FS)}$ and restriction
    type estimates $({\rm{ {ST}}}_{p_i,2}^{2})$, for some $p_i$
    with $1 \leq p_i < 2$. Suppose that $F\in {\mathscr S}_{\rm
    even}$.
    \begin{itemize}
        \item[(i)] If the function $F$ is supported on
            $[-R_1,R_1]\times [-R_2,R_2]$, then for each $s
            > 0$ and $t > 0$, for each $\varepsilon > 0$
            there exists a constant $C_{s,t}$ such that for
            every $B_1(x_1,r_1)\times B_2(x_2,r_2)$ and for
            every $j$, $k = 1,2,\ldots,$
        \begin{eqnarray}\label{e4.2}
            \lefteqn{\left\|P_{B_1(x_1,2^jr_1)^c\times B_2(x_,2^kr_2)^c}F\big(\sqrt{L_1},\sqrt{L_2}\big)
            P_{B_1(x_1, r_1)\times  B_2(x_2, r_2)} \right\|_{L^{p_1}(X_1; \, L^{p_2}( X_2))\to L^2(X_1\times X_2)}}
            \hspace{0.2cm}\nonumber\\
            &\leq& C_{s,t}\prod_{i=1}^2\max\left\{{\big( R_ir_i \big)^{n_i}
             \over V_i(x_i,r_i)  }, {1\over {V_i(x_i,R_i^{-1})}}
             \right\}^{1/p_i-1/2}  (2^jr_1 R_1)^{-s} (2^kr_2 R_2)^{-t}
             \big\|\delta_{(R_1,R_2)}F\big\|_{W^{(s+\varepsilon,\, t+\varepsilon), 2} ({\mathbb R\times \mathbb R})};\hspace{-1cm}
        \end{eqnarray}

        \item[(ii)] If the function $F$ is supported on
            $[-R_1,R_1]\times \RR$, then for each $s>0$ and
            for each $\varepsilon>0$, there exists a
            constant $C_s$ such that for every ball
            $B_1(x_1,r_1)$ and for every $j = 1,2,\ldots,$
        \begin{eqnarray}\label{e4.3}
            \lefteqn{\left\|P_{B_1(x_1,2^jr_1)^c\times X_2}F\big(\sqrt{L_1},\sqrt{L_2}\big)P_{B_1(x_1, r_1)\times X_2}
            \right\|_{L^{p_1}(X_1; \, L^2(X_2))\to L^2(X_1\times X_2)}}
            \hspace{0.2cm}\nonumber\\
            &\leq& C_s \max\left\{{\big( R_1r_1 \big)^{n_1} \over V_1(x_1, r_1)  }, {1\over {V_1(x_1,R_1^{-1})}}
            \right\}^{1/p_1-1/2} (2^jr_1R_1)^{-s}
            \big\|\delta_{(R_1,1)}F\big\|_{{L^{\infty}_{\la_2}({\mathbb R}; \, W^{s+\varepsilon,\,  2}_{\la_1}(\mathbb R))}};
        \end{eqnarray}

        \item[(iii)] If the function $F$ is supported on
            $\RR\times [-R_2,R_2]$, then for each $t>0$ and
            for each $\varepsilon > 0$, there exists a
            constant $C_t$ such that for every ball
            $B_2(x_2,r_2)$ and for every $k = 1,2,\ldots,$
        \begin{eqnarray}\label{e4.4}
            \lefteqn{\left\| P_{X_1\times B_2(x_2,2^kr_2)^c}F\big(\sqrt{L_1},\sqrt{L_2}\big)P_{X_1\times B_2(x_2, r_2)}
            \right\|_{L^{p_2}(X_2; \, L^2(X_1))\to L^2(X_1\times X_2)}}
            \hspace{0.2cm}\nonumber\\
            &\leq& C_t \max\left\{{\big( R_2r_2 \big)^{n_2} \over V_2(x_2, r_2)  }, {1\over {V_2(x_2,R_2^{-1})}}
            \right\}^{1/p_2-1/2} (2^kr_2
            R_2)^{-t}\big\|\delta_{(1,R_2)}F\big\|_{L^{\infty}_{\la_1}({\mathbb R}; \, W^{t+\varepsilon, 2}_{\la_2}(\mathbb R))}.
        \end{eqnarray}
    \end{itemize}
\end{prop}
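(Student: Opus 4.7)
The plan is to prove part (i) by Fourier inversion combined with the finite propagation speed property ${\rm (FS)}$ and Lemma~\ref{le4.1}(i); parts (ii) and (iii) will follow from analogous one-variable Fourier decompositions using Lemma~\ref{le4.1}(ii), (iii). First, by rescaling the metric via $d_i \mapsto R_i d_i$ and the operator via $L_i \mapsto L_i/R_i^2$ (which preserves both ${\rm (FS)}$ and $({\rm ST}_{p_i,2}^2)$, and sends $B_i(x_i,r_i)$ to a ball of radius $R_i r_i$ in the new metric), I reduce to the case $R_1 = R_2 = 1$, with $F$ supported in $[-1,1]^2$; the target estimate transforms accordingly.

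Since $F\in\mathscr{S}_{\rm even}$, Fourier inversion gives
\[
F(\sqrt{L_1},\sqrt{L_2}) \;=\; \frac{1}{(2\pi)^2}\iint\hat{F}(\xi_1,\xi_2)\cos(\xi_1\sqrt{L_1})\cos(\xi_2\sqrt{L_2})\,d\xi_1\,d\xi_2.
\]
Pick an even $\chi\in C_c^\infty(\RR)$ with $\chi\equiv 1$ on $[-1,1]$ and $\supp\chi\subset[-2,2]$, and set $\chi_a(\xi):=\chi(\xi/a)$, $\chi_b(\xi):=\chi(\xi/b)$ with $a=2^{j-3}r_1$, $b=2^{k-3}r_2$. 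The partition of unity
\[
1=\chi_a(\xi_1)\chi_b(\xi_2)+(1-\chi_a(\xi_1))\chi_b(\xi_2)+\chi_a(\xi_1)(1-\chi_b(\xi_2))+(1-\chi_a(\xi_1))(1-\chi_b(\xi_2))
\]
decomposes $F(\sqrt{L_1},\sqrt{L_2})=T_{00}+T_{01}+T_{10}+T_{11}$. By ${\rm (FS)}$, each of $T_{00}$, $T_{01}$, $T_{10}$ maps functions supported in $B_1(x_1,r_1)\times B_2(x_2,r_2)$ into a set on which at least one of the outer cutoffs $\chi_{B_1(x_1,2^jr_1)^c}$ or $\chi_{B_2(x_2,2^kr_2)^c}$ vanishes, so these three terms contribute nothing. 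Only $T_{11}=G(\sqrt{L_1},\sqrt{L_2})$, where $G\in L^2(\RR^2)$ has Fourier transform $\hat{G}=\hat{F}\cdot(1-\chi_a)(1-\chi_b)$, survives.

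To estimate $\|T_{11}P_{B_1\times B_2}\|_{L^{p_1}(L^{p_2})\to L^2}$, I apply Lemma~\ref{le4.1}(i) with parameters $\tilde{R}_i:=\max(1,1/r_i)$ so that $r_i\geq 1/\tilde{R}_i$; by doubling, $(\tilde{R}_i r_i)^{n_i}/V_i(x_i,r_i)\lesssim\max\{r_i^{n_i}/V_i(x_i,r_i),\,1/V_i(x_i,1)\}$. Combining with Plancherel and the elementary inequality
\[
\iint_{|\xi_1|\geq a,\,|\xi_2|\geq b}|\hat{F}(\xi_1,\xi_2)|^2\,d\xi\ \leq\ a^{-2(s+\epsilon)}b^{-2(t+\epsilon)}\|F\|_{W^{(s+\epsilon,t+\epsilon),2}}^2
\]
yields a decay factor $(2^jr_1)^{-(s+\epsilon)}(2^kr_2)^{-(t+\epsilon)}$, which dominates the target $(2^jr_1)^{-s}(2^kr_2)^{-t}$ in the regime $2^jr_1,\,2^kr_2\geq 1$.

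The main obstacle is that $G$ is not compactly supported --- convolution with the Schwartz functions $\check{\chi}_a,\check{\chi}_b$ smears out the support of $F$ --- whereas Lemma~\ref{le4.1}(i) requires a compactly supported input. I handle this by truncating $G$ to a large rectangle $[-M_1,M_1]\times[-M_2,M_2]$ with $M_i\gtrsim\tilde{R}_i$ and controlling the tail via the rapid decay of $\check{\chi}_a,\check{\chi}_b$ outside $\supp F$. In the complementary regime where $2^jr_1\leq 1$ or $2^kr_2\leq 1$, the target decay factor is bounded below by a constant, and the estimate follows directly from Lemma~\ref{le4.1}(i) applied to $F$ itself (with $\tilde{R}_i=\max(1,1/r_i)$), without any Fourier decomposition. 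For parts (ii) and (iii), the same strategy applies but the Fourier decomposition is performed only in the variable $\xi_1$ (for (ii)) or $\xi_2$ (for (iii)); the partition of unity then has two terms, ${\rm (FS)}$ eliminates the low-$|\xi|$ piece, and Lemma~\ref{le4.1}(ii), (iii) with the corresponding one-variable Sobolev norms yield (4.3), (4.4).
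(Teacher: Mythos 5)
Your decomposition $1=\chi_a\chi_b+(1-\chi_a)\chi_b+\chi_a(1-\chi_b)+(1-\chi_a)(1-\chi_b)$, together with Lemma~\ref{le4.4}/(FS) to kill the three near-diagonal pieces, is exactly the paper's move: the paper sets $\psi = 1-(1-\phi_0(\xi_1/(2^{j-3}r_1)))(1-\phi_0(\xi_2/(2^{k-3}r_2)))$ and observes that $F(\sqrt{L_1},\sqrt{L_2})$ and $(F-T_\psi F)(\sqrt{L_1},\sqrt{L_2})$ have the same kernel on the off-diagonal region. So the starting point is sound.

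The first problem is the claimed doubling inequality
$(\tilde{R}_i r_i)^{n_i}/V_i(x_i,r_i)\lesssim\max\{r_i^{n_i}/V_i(x_i,r_i),\,1/V_i(x_i,1)\}$
for $\tilde{R}_i:=\max(1,1/r_i)$. In the regime $r_i<1$ you have $\tilde{R}_i r_i=1$, so the left side is $1/V_i(x_i,r_i)$, while the right side is $\max\{r_i^{n_i}/V_i(x_i,r_i),\,1/V_i(x_i,1)\}$. Both candidates on the right are \emph{smaller} than $1/V_i(x_i,r_i)$ (since $r_i^{n_i}<1$ and $V_i(x_i,1)>V_i(x_i,r_i)$), so the inequality fails: doubling bounds $V_i(x_i,1)$ from \emph{above} by $r_i^{-n_i}V_i(x_i,r_i)$, not from below. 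Your trick of blowing up the spectral parameter $R_i$ to $1/r_i$ while keeping the ball of radius $r_i$ gives a strictly worse constant than the target. The paper resolves the $r_iR_i<1$ case by enlarging the \emph{ball} to radius $1/R_i$ (using $P_{B_1(x_1,r_1)}=P_{B_1(x_1,1/R_1)}P_{B_1(x_1,r_1)}$) and keeping $R_i$; then $(R_i\cdot R_i^{-1})^{n_i}/V_i(x_i,R_i^{-1})=1/V_i(x_i,R_i^{-1})$ matches the second entry of the $\max$. You would also need to track how the $L^2$-dilation factor $\|\delta_{(\tilde R_1,\tilde R_2)}F\|_{L^2}=\sqrt{r_1'r_2'}\|F\|_{L^2}$ interacts with the volume factor, and for general $n_i$, $p_i$ the exponents do not cancel.

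The second problem is the tail control. After convolving with $\check\chi_a,\check\chi_b$, the function $G=F-T_\psi F$ is no longer compactly supported, and Lemma~\ref{le4.1} requires compact spectral support. You propose a single hard truncation to $[-M_1,M_1]\times[-M_2,M_2]$ and appeal to "rapid decay" for the tail, but the tail is itself non-compactly supported, so you cannot apply Lemma~\ref{le4.1} to it directly and you have no mechanism stated for bounding $(G-G_{\rm trunc})(\sqrt{L_1},\sqrt{L_2})$ as an operator. The paper avoids the hard truncation entirely by multiplying by a dyadic partition $\sum_{\ell,m}\phi_\ell\phi_m$ of the frequency (spectral) axis, applying Lemma~\ref{le4.1}(i) to each annular piece $\Phi_{\ell,m}[F-T_\psi F]$ (which is compactly supported), and summing; the resulting sum is controlled by splitting into four cases according to whether $\ell,m$ are 0 or $\geq 1$, using Plancherel for the $(0,0)$ piece and the rapid decay of $\widehat{\phi_0}$ for the others. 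That quantitative ladder is the essential content that your sketch skips, and it is also where the extra $\varepsilon$ in the Sobolev index is consumed.

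Finally, the up-front rescaling to $R_1=R_2=1$ is a legitimate simplification (the paper works directly with general $R_i$, but your normalization is fine), and your $L^2$ Fourier-tail estimate is correct. The gaps above, however, are genuine: one is a false inequality, the other a missing argument, and both occur at points where the paper does real work.
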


\medskip

To prove Proposition~\ref{prop4.1}, we use the following lemma.
For $\rho_1>0$ and $\rho_2>0$, we define the double diagonal
set~$\mathcal{D}_{(\rho_1,\rho_2)}$ by
\[
    \mathcal{D}_{(\rho_1,\rho_2)}
    := \left\{(x_1,x_2; y_1,y_2)\in ({X_1\times X_2})\times ({X_1\times X_2})
    : d_1(x_1,y_1)\leq \rho_1~\text{and}~d_2(x_2,y_2)
    \leq \rho_2\right\}.
\]

\begin{lemma}\label{le4.4}
    Suppose that the metric measure spaces $X_i$, $i = 1$, 2,
    satisfy the doubling condition~\eqref{doubling} with
    exponent~$n_i$, and that $L_1$ and $L_2$ are nonnegative
    self-adjoint operators satisfying the finite propagation
    speed property~${\rm (FS)}$. Suppose that $F\in {\mathscr
    S}_{\rm even}$.
    \begin{itemize}
        \item[(i)] If   $\support \widehat{F} \subset [-\rho_1,\,
            \rho_1]\times \mathbb{R}$, then
        $$
        \support K_{F(\SLi,\SLii)}(x_1,x_2,y_1,y_2) \subset \mathcal{D}_{(\rho_1,\infty)}.
        $$

        \item[(ii)] If   $\support \widehat{F} \subset
            \mathbb{R}\times [-\rho_2,\, \rho_2]$, then
        $$
        \support  K_{F(\SLi,\SLii)}(x_1,x_2,y_1,y_2) \subset \mathcal{D}_{(\infty,\rho_2)}.
        $$

        \item[(iii)] If   $\support \widehat{F} \subset
            \big([-\rho_1,\, \rho_1]\times \mathbb{R}\big) \cup
            \big(\mathbb{R}\times [-\rho_2,\, \rho_2]\big)$,
            then
        $$
            \support K_{F(\SLi,\SLii)}(x_1,x_2,y_1,y_2)
            \subset \mathcal{D}_{(\rho_1,\infty)} \cup \mathcal{D}_{(\infty,\rho_2)}.
        $$
    \end{itemize}
\end{lemma}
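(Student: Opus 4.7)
\textbf{Proof plan for Lemma~\ref{le4.4}.} The plan is to represent $F(\sqrt{L_1},\sqrt{L_2})$ as a superposition of the wave operators $\cos(\xi_1\sqrt{L_1})\cos(\xi_2\sqrt{L_2})$ weighted by $\widehat F$, and then read off the support of the kernel directly from property (FS) applied to each factor.

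First I would use the evenness of $F$ in each variable, together with Fourier inversion, to obtain the representation
\begin{equation*}
    F(\lambda_1,\lambda_2)
    = \int_{\mathbb R}\!\!\int_{\mathbb R}
        \widehat F(\xi_1,\xi_2)\,\cos(\xi_1\lambda_1)\cos(\xi_2\lambda_2)\,d\xi_1\,d\xi_2,
\end{equation*}
where the odd parts in the complex exponential vanish thanks to $F\in\mathscr S_{\rm even}$ (and $\widehat F$ inheriting the same symmetry). Since $L_1$ and $L_2$ commute through the product spectral decomposition, the multivariable functional calculus then yields
\begin{equation*}
    F(\sqrt{L_1},\sqrt{L_2})
    = \int_{\mathbb R}\!\!\int_{\mathbb R}
        \widehat F(\xi_1,\xi_2)\,
        \cos(\xi_1\sqrt{L_1})\cos(\xi_2\sqrt{L_2})\,d\xi_1\,d\xi_2,
\end{equation*}
interpreted weakly against test functions in $L^2(X_1\times X_2)$.

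To prove (i), I would test against tensor products $f=f_1\otimes f_2$ and $g=g_1\otimes g_2$ with $f_i,g_i\in C(X_i)$ of compact support, and reduce the bilinear form
\begin{equation*}
    \langle F(\sqrt{L_1},\sqrt{L_2})f,g\rangle
    = \int\!\!\int \widehat F(\xi_1,\xi_2)\,
        \langle\cos(\xi_1\sqrt{L_1})f_1,g_1\rangle\,
        \langle\cos(\xi_2\sqrt{L_2})f_2,g_2\rangle\,d\xi_1\,d\xi_2
\end{equation*}
to zero whenever the $X_1$-supports of $f_1$ and $g_1$ are separated by more than $\rho_1$. Indeed, $\widehat F$ is supported in $|\xi_1|\le\rho_1$, and on this set property (FS) forces the first inner product to vanish; density of tensor products and the characterization of $\support K_T$ recalled just before Definition~\ref{def:finitspeed} then give
$\support K_{F(\sqrt{L_1},\sqrt{L_2})}\subset\mathcal D_{(\rho_1,\infty)}$. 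Part~(ii) is the mirror-image argument with the roles of the two variables swapped.

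Part~(iii) I would obtain by a splitting of $\widehat F$. Write
\begin{equation*}
    \widehat F = \widehat F\cdot\chi_{[-\rho_1,\rho_1]}(\xi_1)
        + \widehat F\cdot\bigl(1-\chi_{[-\rho_1,\rho_1]}(\xi_1)\bigr),
\end{equation*}
so that the first piece is supported in $[-\rho_1,\rho_1]\times\mathbb R$ and, because of the support hypothesis on $\widehat F$ in~(iii), the second piece is supported in $\mathbb R\times[-\rho_2,\rho_2]$. Both pieces are even in each variable (each cut-off is even and applied to an even function), so the corresponding inverse Fourier transforms $F_1,F_2$ lie in $\mathscr S_{\rm even}$ and satisfy $F=F_1+F_2$. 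Applying (i) to $F_1$ and (ii) to $F_2$ gives $\support K_{F_1(\sqrt{L_1},\sqrt{L_2})}\subset\mathcal D_{(\rho_1,\infty)}$ and $\support K_{F_2(\sqrt{L_1},\sqrt{L_2})}\subset\mathcal D_{(\infty,\rho_2)}$, hence the claim.

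The only genuine technicality I foresee is the rigorous passage from the formal integral representation to the bilinear pairing, in particular justifying Fubini for the $(\xi_1,\xi_2)$-integration and the spectral integration against $dE_{L_1}\otimes dE_{L_2}$. A standard remedy is to first regularize $F$ by convolving $\widehat F$ with a compactly supported smooth mollifier (preserving evenness and the stated support up to an arbitrarily small enlargement), establish the identity for the regularized multipliers where everything is absolutely convergent, and then pass to the limit using boundedness of the multiplier on $L^2(X_1\times X_2)$ together with dominated convergence to remove the mollifier; this delivers the support statement with the original $\rho_i$.
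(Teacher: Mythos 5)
Your argument is correct and follows essentially the same route as the paper: Fourier inversion to express $F(\sqrt{L_1},\sqrt{L_2})$ as a double integral of $\widehat F(t_1,t_2)\cos(t_1\sqrt{L_1})\cos(t_2\sqrt{L_2})$, then reading off the support of the product kernel from (FS) applied to each cosine factor. The paper is terse here, giving the representation formula and then deferring everything else to a ``standard argument, as in [COSY, Lemma~2.1]''; what you have written out (pairing against tensor products, density, regularization by mollifying $\widehat F$ to justify Fubini and the passage to the spectral integral) is precisely the content that is being deferred, and it is sound. One small remark on part~(iii): the splitting $\widehat F=\widehat F\chi_{[-\rho_1,\rho_1]}(\xi_1)+\widehat F\bigl(1-\chi_{[-\rho_1,\rho_1]}(\xi_1)\bigr)$ works, but it is not needed. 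If $(x_1,x_2;y_1,y_2)$ lies outside $\mathcal D_{(\rho_1,\infty)}\cup\mathcal D_{(\infty,\rho_2)}$, then $d_1(x_1,y_1)>\rho_1$ and $d_2(x_2,y_2)>\rho_2$, and for every $(t_1,t_2)\in\support\widehat F$ one has either $|t_1|\le\rho_1$ (so $K_{\cos(t_1\sqrt{L_1})}(x_1,y_1)=0$ by (FS)) or $|t_2|\le\rho_2$ (so $K_{\cos(t_2\sqrt{L_2})}(x_2,y_2)=0$); hence the integrand in the kernel representation vanishes identically on the complement, and (iii) follows from the same single computation as (i) and (ii) without decomposing $F$. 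This also sidesteps the (minor, but real) concern that the sharp cut-offs $\chi_{[-\rho_1,\rho_1]}$ degrade the regularity of $F_1$ and $F_2$, which your mollification step would otherwise have to absorb.
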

\begin{proof} Since $F\in {\mathscr S}_{\rm even}$, it follows from  the Fourier
inversion formula that
$$
    F\big(\sqrt{L_1},\sqrt{L_2}\big)
    = \frac{1}{4\pi^2}\int_{\RR^2} \widehat{F}(t_1,t_2)
    \cos(t_1\sqrt{L_1})\cos(t_2\sqrt{L_2}) \;dt_1dt_2,
$$
which gives
\begin{eqnarray*}
\lefteqn{K_{F\big(\sqrt{L_1},\sqrt{L_2}\big)}(x_1,x_2,y_1,y_2)}\\
&& =\frac{1}{4\pi^2}\int_{\RR^2}
  \widehat{F}(t_1,t_2) K_{\cos(t_1\sqrt{L_1})}(x_1,y_1)K_{\cos(t_2\sqrt{L_2})}(x_2,y_2) \;dt_1dt_2.
\end{eqnarray*}
By a standard argument, as in \cite[Lemma 2.1]{COSY},
Lemma~\ref{le4.4} follows.
\end{proof}

\medskip

\begin{proof}[Proof of Proposition~\ref{prop4.1}]

To prove item (i), we consider four cases separately.

\smallskip

\noindent {\bf Case (1):}  $r_1R_1\geq 1\mbox{ and } r_2R_2\geq
1.$

Recall that $\phi$ is a $C_0^\infty(\mathbb{R})$ function such
that $\support \phi \subseteq (1,4)$ and
$\sum_{\ell\in\mathbb{Z}} \phi(2^{-\ell}\lambda) = 1$ for all
$\lambda > 0$.
Set
\[
    \phi_0(\lambda)
    := 1-\sum_{\ell> 0} \phi(2^{-\ell} \lambda),
    \quad\text{and}\quad
    \phi_\ell(\lambda)
    := \phi(2^{-\ell}\lambda) \quad\text{for $\ell \geq 1$.}
\]
Define the functions
\begin{align*}
    \psi(\xi_1,\xi_2)
    &:= 1-\left(1-\phi_0\left({\xi_1\over 2^{j-3}r_1}\right) \right)
        \left(1-\phi_0\left({\xi_2\over 2^{k-3}r_2}\right) \right)\qquad\text{and} \\
    \psi_0(\xi_1,\xi_2)
    &:= \delta_{(1/R_1,1/R_2)}\psi(\xi_1, \xi_2).
\end{align*}
Set ${\widehat {T_\psi F}} :=\psi {\widehat   F} $, where
${\widehat   F}$ denotes the Fourier transform of $F$.
It follows from Lemma~\ref{le4.4}(iii) that
$$
 \support K_{T_\psi F(\SLi,\SLii)}\subseteq \left\{(z_1,z_2,y_1,y_2):
  d_1(z_1,y_1)\leq 2^{j-1}r_1  \ {\rm or}\
  d_2(z_2,y_2)\leq 2^{k-1}r_2   \right\}.
$$
Therefore
\begin{eqnarray}\label{e4.5}
K_{F(\SLi,\SLii)}(z_1,z_2,y_1,y_2)=K_{[F-T_\psi F](\SLi,\SLii)}(z_1,z_2,y_1,y_2),
\end{eqnarray}
when    $d_1(z_1,y_1)>2^{j-1}r_1$ and
$d_2(z_2,y_2)>2^{k-1}r_2$. We may write
\begin{eqnarray}\label{e4.6}
1  &\equiv&\left(\sum_{\ell\geq 0}(\delta_{1/R_1} \phi_\ell) (\lambda_1) \right)
\left(\sum_{ m\geq 0} (\delta_{1/R_2} \phi_m) (\lambda_2)\right).
\end{eqnarray}
Set $\Phi_{\ell, m}(\lambda_1, \lambda_2) :=\phi_\ell (\lambda_1)\phi_m (\lambda_2)$.
Observe that for every $\ell, m\geq 0, $
\begin{eqnarray*}
 \support \left( (\delta_{(1/R_1, 1/R_2)} \Phi_{\ell, m} ) [F-T_\psi
F]\right) \subseteq [-2^{\ell +2}R_1, 2^{\ell +2}R_1]\times [- 2^{m +2}R_2, 2^{m +2}R_2],
 \end{eqnarray*}
   This,    in combination with  equalities \eqref{e4.5} and  \eqref{e4.6} and
   (i) of Lemma~\ref{le4.1}, shows that
\begin{eqnarray}\label{e4.7}
&& \big\|P_{B(x_1,2^jr_1)^c\times B(x_,2^kr_2)^c}F\big(\sqrt{L_1},\sqrt{L_2}\big)P_{B_1(x_1, r_1)\times  B_2(x_2, r_2)}
 \big\|_{L^{p_1}(X_1; \,L^{p_2}(X_2))\to L^2(X_1\times X_2)}\nonumber\\
&&\hskip.5cm \leq   \big\| (F-T_\psi
F)\big(\sqrt{L_1},\sqrt{L_2}\big)P_{B_1(x_1, r_1)\times  B_2(x_2, r_2)}
\big\|_{L^{p_1}(X_1; \, L^{p_2}(X_2))\to L^2(X_1\times X_2)}\nonumber\\
&&\hskip.5cm \leq   \sum_{\ell\geq 0} \sum_{ m\geq 0} \big\| \big((\delta_{(1/R_1, 1/R_2)} \Phi_{\ell, m} )
 [F-T_\psi
F]\big)
 \big(\sqrt{L_1},\sqrt{L_2}\big)P_{B_1(x_1, r_1)\times  B_2(x_2, r_2)}
 \big\|_{L^{p_1}(X_1; \,L^{p_2}(X_2))\to L^2(X_1\times X_2)}\nonumber\\
&&\hskip.5cm \leq   C\prod_{i=1}^2\left({{(R_ir_i)^{n_i} \over
  { V_i(x_i,r_i) }}}\right)^{1/p_i-1/2} \times  M_{\phi, F, R_1, R_2},
 \end{eqnarray}
 where
 \begin{eqnarray}\label{e4.8}
 M_{\phi, F, R_1, R_2} &:=&  \left\| \phi_0 (\lambda_1)\phi_0(\lambda_2)\delta_{(R_1,R_2)}[F-T_\psi
F](\lambda_1, \lambda_2)\right\|_{L^2({\mathbb R^2} )}\nonumber\\
 &&+ \sum_{m\geq 1} 2^{m (n_2(1/p_2-1/2)-1/2)}\left\|  \phi_0 (\lambda_1)\phi_m(\lambda_2)\delta_{(R_1,R_2)}[F-T_\psi
F](\lambda_1, \lambda_2)\right\|_{L^2({\mathbb R^2} )}\nonumber\\
 &&+ \sum_{\ell\geq 1} 2^{\ell (n_1(1/p_1-1/2)-1/2)}\left\|
 \phi_\ell (\lambda_1)\phi_0(\lambda_2)\delta_{(R_1,R_2)}[F-T_\psi
F](\lambda_1, \lambda_2)\right\|_{L^2({\mathbb R^2} )}\nonumber\\
 &&+ \sum_{\ell\geq 1} \sum_{m\geq 1} 2^{m (n_2(1/p_2-1/2)-1/2)} 2^{\ell (n_1(1/p_1-1/2)-1/2)}
\left\|  \phi_\ell (\lambda_1)\phi_m(\lambda_2)\delta_{(R_1,R_2)}[F-T_\psi
F](\lambda_1, \lambda_2)\right\|_{L^2({\mathbb R^2} )}\nonumber\\
 &=:& {\rm (I) + (II) +(III) +(IV)}.
 \end{eqnarray}

For the term ${\rm (I)}$, we recall that $\psi_0(\xi_1,\xi_2) =
\delta_{(1/R_1, 1/R_2)}\psi(\xi_1, \xi_2)$ and use the Plancherel
theorem to obtain
\begin{eqnarray*}
{\rm (I)}\leq C\left\|  \delta_{(R_1,R_2)}[F-T_\psi
F]\right\|_{L^2({\mathbb R^2} )}\leq
C\left\|\left(1-\psi_0\right){\widehat{\delta_{(R_1,R_2)}F}}
\right\|_{L^2({\mathbb R^2} )}.
\end{eqnarray*}
Now we estimate
$\left\|\left(1-\psi_0\right){\widehat{\delta_{(R_1,R_2)}F}}
\right\|_{L^2({\mathbb R^2} )}.$ Observe that if either $\ell <
\ell_0 := \log_2(2^{j-4}r_1R_1)$ or $m < m_0 :=
\log_2(2^{k-4}r_2R_2)$ holds, then
$$
    \phi_\ell  (\xi_1) \phi_m (\xi_2) [1-\psi_0](\xi_1, \xi_2)
    = 0,
$$
which, together with equality~\eqref{e4.6}, gives
\begin{eqnarray}\label{e4.9}\hspace{-1cm}
&& \left\|\left(1-\psi_0\right){\widehat{\delta_{(R_1,R_2)}F}} \right\|_{L^2({\mathbb R^2} )} \\
&&\hskip.5cm= \left\|\left(\sum_{\ell\geq 0}  \phi_\ell (\xi_1) \right)
\left(\sum_{ m\geq 0}   \phi_m (\xi_2)\right)
\left[(1-\psi_0){\widehat{\delta_{(R_1,R_2)}F}}\right](\xi_1, \xi_2) \right\|_{L^2({\mathbb R^2} )}\nonumber\\
&&\hskip.5cm \leq \left\|\left(\sum_{\ell\geq \ell_0} \phi_\ell (\xi_1) \right)
\left(\sum_{ m\geq m_0}   \phi_m (\xi_2)\right)
 {\widehat{\delta_{(R_1,R_2)}F}}(\xi_1, \xi_2) \right\|_{L^2({\mathbb R^2} )}\nonumber\\
&&\hskip.5cm  \leq  2^{-(\ell_0 s+m_0 t)}
   \left\|     \sum_{ m\geq m_0} 2^{m t} \left\|
  \phi_m (\xi_2) \left(\sum_{\ell\geq \ell_0} 2^{s\ell_0} \phi_\ell (\xi_1)
 {\widehat{\delta_{(R_1,R_2)}F}}(\xi_1, \xi_2)\right)
 \right\|_{L^2_{\xi_2}(\mathbb R)}   \right\|_{L^2_{\xi_1}({\mathbb R} )}.\nonumber
 \end{eqnarray}
Let us now estimate the right hand side of \eqref{e4.9}.
It follows from $\support \phi_m  \subset [2^m,2^{m+2}]$ that
for every $\varepsilon > 0$,
\begin{eqnarray}\label{ff}
&&\sum_{ m\geq m_0} 2^{m t} \left\|
  \phi_m (\xi_2) \left(\sum_{\ell\geq \ell_0} 2^{s\ell_0} \phi_\ell (\xi_1)
 {\widehat{\delta_{(R_1,R_2)}F}}(\xi_1, \xi_2)\right)
 \right\|_{L^2_{\xi_2}(\mathbb R)}  \nonumber\\
&&\hskip.5cm = \sum_{ m\geq m_0} 2^{m t}
\left\|\left(1+\xi_2^2\right)^{-(t+\varepsilon)/2}\left(1+\xi_2^2\right)^{(t+\varepsilon)/2}
  \phi_m (\xi_2) \left(\sum_{\ell\geq \ell_0} 2^{s\ell_0} \phi_\ell (\xi_1)
 {\widehat{\delta_{(R_1,R_2)}F}}(\xi_1, \xi_2)\right)
 \right\|_{L^2_{\xi_2}(\mathbb R)}  \nonumber\\
&&\hskip.5cm \leq C\sum_{ m\geq m_0} 2^{m t}2^{-(t+\varepsilon)m}
\left\|\left(1+\xi_2^2\right)^{(t+\varepsilon)/2}
  \phi_m (\xi_2) \left(\sum_{\ell\geq \ell_0} 2^{s\ell_0} \phi_\ell (\xi_1)
 {\widehat{\delta_{(R_1,R_2)}F}}(\xi_1, \xi_2)\right)
 \right\|_{L^2_{\xi_2}(\mathbb R)}  \nonumber\\
&&\hskip.5cm   \leq   C_{\varepsilon}
  \left\| \left(1+\xi_2^2\right)^{(t+\varepsilon)/2}
  \left(\sum_{\ell\geq \ell_0} 2^{s\ell_0} \phi_\ell (\xi_1){\widehat{\delta_{(R_1,R_2)}F}}  (\xi_1, \xi_2)\right)
  \right\|_{L^2_{\xi_2}({\mathbb R})}.
 \end{eqnarray}
This yields
\begin{eqnarray}\label{e4.99}\hspace{-1cm}
 {\rm RHS\ of \ }  \eqref{e4.9}
 &\leq&   C_{\varepsilon}2^{-(\ell_0 s+m_0 t)}
   \left\|
  \left\| \left(1+\xi_2^2\right)^{(t+\varepsilon)/2}
  \left(\sum_{\ell\geq \ell_0} 2^{s\ell_0} \phi_\ell (\xi_1){\widehat{\delta_{(R_1,R_2)}F}}\right)  (\xi_1, \xi_2)
  \right\|_{L^2_{\xi_2}({\mathbb R})}\right\|_{L^2_{\xi_1}({\mathbb R} )} \nonumber\\
    &\leq&  C_{\varepsilon}2^{-(\ell_0 s+m_0 t)}
   \left\|
  \left(1+\xi_2^2\right)^{(t+\varepsilon)/2}
  \sum_{\ell\geq \ell_0} 2^{s\ell} \left\|  \phi_\ell (\xi_1){\widehat{\delta_{(R_1,R_2)}F}} (\xi_1, \xi_2)
  \right\|_{L^2_{\xi_1}({\mathbb R})}\right\|_{L^2_{\xi_2}({\mathbb R} )}.
\end{eqnarray}
On the other hand, we apply a similar argument to that
in~\eqref{ff} to obtain
 \begin{eqnarray*}
    \sum_{\ell\geq \ell_0} 2^{s\ell} \left\|  \phi_\ell (\xi_1){\widehat{\delta_{(R_1,R_2)}F}} (\xi_1, \xi_2)
  \right\|_{L^2_{\xi_2}({\mathbb R})}  &\leq&
      C_{\varepsilon}
   \left\|
  \left(1+\xi_1^2\right)^{(s+\varepsilon)/2}
  {\widehat{\delta_{(R_1,R_2)}F}}(\xi_1, \xi_2)
 \right\|_{L^2_{\xi_1} ({\mathbb R} )},
 \end{eqnarray*}
 which, in combination with estimates \eqref{e4.99}, \eqref{e4.9} and the fact that $\ell_0=\log_2(2^{j-4}r_1R_1)$ and
 $m_0=\log_2(2^{k-4}r_2R_2)$, yields
 \begin{eqnarray*}
  \left\|\left(1-\psi_0\right){\widehat{\delta_{(R_1,R_2)}F}} \right\|_{L^2({\mathbb R^2} )}
  &\leq&  C_{\varepsilon}(2^jr_1R_1)^{-s}(2^kr_2R_2)^{-t}\big\|\delta_{(R_1,R_2)}F
 \big\|_{W^{(s+\varepsilon, \, t+\varepsilon), 2}({\mathbb R}\times {\mathbb R})}
\end{eqnarray*}
as desired.

Now we consider the term ${\rm (II)}$. Set
$$
  {\widetilde\psi}(\xi_1, \xi_2) :=\left(1-\phi_0\left(\xi_1 \over 2^{j-3}r_1R_1 \right)\right)
\phi_0\left(\xi_2 \over 2^{k-3}r_2R_2 \right).
$$
First, we observe that for each $m\geq 1$ and for each fixed
$\lambda_1$, $\delta_{(R_1,R_2)}F(\lambda_1, \lambda_2)$ equals
zero when $\lambda_2$ is in  the support of the
function~$\phi_m $. Thus for $ m\geq 1$ we have
\begin{eqnarray}\label{e4.11}\hspace{1.5cm}
    \left\|\phi_0(\lambda_1)\phi_m(\lambda_2)\delta_{(R_1, R_2)}[F-T_\psi
    F]\right\|_{L^2({\mathbb R}^2)}
    &=&\left\|\phi_0(\lambda_1)\phi_m(\lambda_2)T_{{\widetilde\psi}}  \delta_{(R_1, R_2)}F
     \right\|_{L^2({\mathbb R^2})}\nonumber\\
     &=&\left\|\phi_m(\lambda_2)T_{\phi_0\left(\xi_2 \over 2^{k-3}r_2R_2 \right)}
    \widetilde{F}(\lambda_1, \lambda_2)
     \right\|_{L^2({\mathbb R^2})},
\end{eqnarray}
where $\widetilde{F}$ denotes the function
$$
\widetilde{F}(\la_1,\la_2)
:= \phi_0(\la_1)T_{\left(1-\phi_0\left(\xi_1 \over 2^{j-3}r_1R_1 \right)\right)}\delta_{(R_1,R_2)}F(\la_1,\la_2).
$$
Next, note that $\phi_0\in C_0^{\infty}(\mathbb R)$. Thus for
every $N>0$, there exists a constant $C_N$ such that
$\widehat{\phi_0}(\xi)\leq C_N(1+|\xi|)^{-N}$. This, together
with the fact that $\support \phi_m\subset [2^m, 2^{m+2}]$ and
$\support \widetilde{F}\subset \RR\times [-1,1]$, shows that
for all~$\lambda_1$, $\lambda_2$ we have
\begin{eqnarray}\label{e4.12}
    \Big|\phi_m(\lambda_2)T_{\phi_0\left(\xi_2 \over 2^{k-3}r_2R_2 \right)}
    \widetilde{F}(\lambda_1, \lambda_2)\Big|
    &&\leq C_N\phi_m(\lambda_2) \int_\RR
    \big|\widetilde{F}(\lambda_1, w)\big|
    \frac{2^{k-3}r_2R_2}{(1+2^{k-3}r_2R_2|\lambda_2-w|)^N}\,dw\nonumber\\
    &&\leq  C_N(2^{k}r_2R_22^m)^{-N+1} \phi_m(\lambda_2)
        \|\widetilde{F}(\lambda_1, \cdot) \|_{L^2(\mathbb R)}.
\end{eqnarray}
Again, we can apply a similar argument to that for the
term~${\rm (I)}$ to show that for every~$\lambda_2$,
\[
    \left\|\phi_0(\la_1)T_{\left(1-\phi_0\left(\xi_1 \over 2^{j-3}r_1R_1
        \right)\right)}\delta_{(R_1,R_2)}F(\lambda_1,\lambda_2)
        \right\|_{L_{\la_1}^2(\mathbb R)}
    \leq C(2^jr_1R_1)^{-s}
        \big\| \delta_{(R_1,R_2)}F(\la_1, \lambda_2)
        \big\|_{W^{s+\varepsilon,2}_{\lambda_1}({\mathbb R})}.
\]
This, in combination with \eqref{e4.12}, yields that
\begin{eqnarray*}
    {\rm RHS\ of\ \eqref{e4.11}}
    \leq C_N(2^jr_1R_1)^{-s} (2^{k}r_2R_22^m)^{-N+2}
        \big\| \delta_{(R_1,R_2)}F \big\|_{L^2_{\lambda_2}({\mathbb R}; W^{s+\varepsilon,
        2}_{\lambda_1}({\mathbb R}))}.
\end{eqnarray*}
Choosing $N$ large enough that $N>\max\{n_2/2,t+2\}$, we obtain
\begin{eqnarray*}
    {\rm (II)}
    &\leq& C\sum_{m\geq 1} 2^{m (n_2-1)/2}C_N(2^jr_1R_1)^{-s}
        (2^{k}r_2R_22^m)^{-N+1}
        \big\| \delta_{(R_1,R_2)}F \big\|_{L^2_{\lambda_2}({\mathbb R};
        W^{s+\varepsilon, 2}_{\lambda_1}({\mathbb R}))} \\
    &\leq& C_N (2^jr_1R_1)^{-s}(2^kr_2R_2)^{-t}
        \big\| \delta_{(R_1,R_2)}F \big\|_{L^2_{\lambda_2}({\mathbb R};
        W^{s+\varepsilon,2}_{\lambda_1}({\mathbb R}))}.
\end{eqnarray*}

A symmetric argument yields
\[
    {\rm (III)}
    \leq C (2^jr_1R_1)^{-s}(2^kr_2R_2)^{-t}
        \big\| \delta_{(R_1,R_2)}F \big\|_{L^2_{\lambda_1}({\mathbb R}; \
        W^{t+\varepsilon, 2}_{\lambda_2}({\mathbb R}))}.
\]

Finally, we estimate the term ${\rm (IV)}$. Set
$$
    {\widetilde{\widetilde\psi}}(\xi_1, \xi_2)
    := \phi_0\left(\xi_1 \over 2^{j-3}r_1R_1 \right)
        \phi_0\left(\xi_2 \over 2^{k-3}r_2R_2 \right).
$$
Then for $\ell \geq 1$ and $m \geq 1$, we have
$$
\left\|\phi_\ell(\lambda_1)\phi_m(\lambda_2)\delta_{(R_1, R_2)}[F-T_\psi
F]\right\|_{L^2({\mathbb R^2})}
=\left\|\phi_\ell(\lambda_1)\phi_m(\lambda_2)T_{{\widetilde{\widetilde\psi}}} \, \delta_{(R_1, R_2)}F
 \right\|_{L^2({\mathbb R^2})}.
$$
An argument as in \eqref{e4.12} shows that
\begin{eqnarray}\label{e4.13}
    \lefteqn{\left\|\phi_\ell(\lambda_1)\phi_m(\lambda_2)
        T_{{\widetilde{\widetilde\psi}}} \, \delta_{(R_1, R_2)}F
        \right\|_{L^2({\mathbb R^2})}} \hspace{1cm} \nonumber\\
     &\leq& C_{N_1, N_2}(2^{j}r_1R_12^\ell)^{-N_1+2} (2^{k}r_2R_22^m)^{-N_2+2} \
        \big\| \delta_{(R_1,R_2)}F \big\|_{L^2(\RR^2)}.
\end{eqnarray}
Taking $N_1$ and $N_2$ large enough such that
$N_1>\max\{n_1/2,s+2\}$ and $N_2>\max\{n_2/2,t+2\}$, we obtain
\begin{eqnarray*}
    {\rm (IV)}
    &\leq& C_{N_1, N_2}\sum_{\ell\geq 1} \sum_{m\geq 1}
        2^{\ell (n_1-1)/2} 2^{m (n_2-1)/2}
        (2^{j}r_1R_12^\ell)^{-N_1+2} (2^{k}r_2R_22^m)^{-N_2+2}
        \big\| \delta_{(R_1,R_2)}F \big\|_{L^2(\RR^2)} \\
    &\leq& C (2^jr_1R_1)^{-s}(2^kr_2R_2)^{-t}
        \big\| \delta_{(R_1,R_2)}F \big\|_{W^{(s+\varepsilon,
        t+\varepsilon), 2} ({\mathbb R}\times {\mathbb R})}.
\end{eqnarray*}

Substituting our estimates of ${\rm (I), (II), (III)}$ and
${\rm (IV)}$ back into~\eqref{e4.8}, we see that
\begin{eqnarray*}
    {\rm LHS\ of \ \eqref{e4.7}}
    \leq C\prod_{i=1}^2\left({\big( R_ir_i \big)^{n_i} \over V_i(x_i, r_i)  }
        \right)^{1/p_i-1/2} (2^jr_1R_1)^{-s}(2^kr_2R_2)^{-t}
        \big\| \delta_{(R_1,R_2)}F \big\|_{W^{(s+\varepsilon,  t+\varepsilon), 2}
        ({\mathbb R}\times {\mathbb R})},
\end{eqnarray*}
which implies~\eqref{e4.2} in the case $r_1R_1\geq 1$ and
$r_2R_2\geq 1$.

\medskip

\noindent {\bf Case (2):}  $r_1R_1< 1 $ and $ r_2R_2\geq 1.$

In this case, we have that $r_1<1/R_1$. Let $M_{\phi, F, R_1, R_2}$ be as in \eqref{e4.8}.
By (i) of Lemma~\ref{le4.1}, it can be verified that
\begin{eqnarray}\label{e4.14}
&& \big\|P_{B_1(x_1,2^jr_1)^c\times B_2(x_2,2^kr_2)^c}
 F\big(\sqrt{L_1},\sqrt{L_2}\big)P_{B_1(x_1, r_1)\times
 B_2(x_2, r_2)} \big\|_{L^{p_1}(X_1; \, L^{p_2}(X_2))\to L^2(X_1\times X_2)}\nonumber\\
&&\hskip.5cm \leq   \big\| (F-T_\psi
F)\big(\sqrt{L_1},\sqrt{L_2}\big)P_{B_1(x_1, r_1)\times
B_2(x_2, r_2)} \big\|_{L^{p_1}(X_1; \, L^{p_2}(X_2))\to L^2(X_1\times X_2)}\nonumber\\
&&\hskip.5cm\leq   \big\| (F-T_\psi
F)\big(\sqrt{L_1},\sqrt{L_2}\big)P_{B_1(x_1, 1/R_1)\times
B_2(x_2, r_2)} \big\|_{L^{p_1}(X_1; \, L^{p_2}(X_2))\to L^2(X_1\times X_2)}\nonumber\\
&&\hskip.5cm \leq   C\left({{ 1\over  {  V_1(x_1, R_1^{-1})}}}\right)^{1/p_1-1/2}
 \left({{ (R_2r_2)^{n_2}\over  {  V_2(x_2,r_2) }}}\right)^{1/p_2-1/2} \times  M_{\phi, F, R_1, R_2}.
 \end{eqnarray}
Substituting our estimates of ${\rm (I), (II), (III)}$ and
${\rm (IV)}$ of {\bf Case (1)} back into~\eqref{e4.14}, we find
that
\begin{eqnarray*}
    {\rm LHS\ of \ \eqref{e4.14}}
    \leq  C\left({{ 1\over  {  V_1(x_1, R_1^{-1})  }}}\right)^{1/p_1-1/2}
    \left({{ (R_2r_2)^{n_2}\over  {  V_2(x_2,r_2) }}}\right)^{1/p_2-1/2}
     (2^jr_1R_1)^{-s}(2^kr_2R_2)^{-t}   \big\|
    \delta_{(R_1,R_2)}F \big\|_{W^{(s+\varepsilon, \,  t+\varepsilon), 2}
    ({\mathbb R}\times {\mathbb R})},
\end{eqnarray*}
which implies \eqref{e4.2} in the case $r_1R_1 < 1 $ and
$r_2R_2 \geq 1.$

\medskip

\noindent {\bf Case (3):}  $r_1R_1\geq  1 $ and $ r_2R_2< 1.$

By a symmetric argument to that in {\bf Case (2)}, we can prove
{\bf Case (3)}.


\medskip

\noindent {\bf Case (4):}  $r_1R_1< 1 $ and $ r_2R_2< 1.$

A similar argument to that for {\bf Case (2)} establishes {\bf
Case (4)}.

This completes the proof of (i). The proofs of items (ii) and
(iii) can be obtained by making minor modifications to that of
item (i); we omit the details. This completes the proof of
Proposition~\ref{prop4.1}.
\end{proof}

We end this section by stating Proposition~\ref{prop4.2}, which
is the analogue for restriction type estimates of the
form~$({\rm{ {ST}}}_{p_i,2}^{\infty})$ of
Proposition~\ref{prop4.1} for~$({\rm{ {ST}}}_{p_i,2}^{2})$. The
proof can be obtained by making minor modifications to that of
Proposition~\ref{prop4.1}; we omit the details.

\begin{prop}\label{prop4.2}
    Suppose that the metric measure spaces $X_i$, $i = 1$, 2,
    satisfy the doubling condition~\eqref{doubling} with
    exponent~$n_i$. Further assume that $L_i$, $i = 1$, 2, are
    nonnegative self-adjoint operators satisfying the finite
    propagation speed property~${\rm (FS)}$ and restriction type
    estimates $({\rm{ {ST}}}_{p_i,2}^{\infty})$, for some $p_i$
    with $1 \leq p_i < 2$.  Suppose that $F\in {\mathscr S}_{\rm
    even}$.
    \begin{itemize}
        \item[(i)]   If the function $F$ is supported on
            $[-R_1,R_1]\times [-R_2,R_2]$, then for each $s>0$
            and $t>0$, for each $\varepsilon > 0$ there exists
            a constant $C_{s,t}$ such that for every
            $B_1(x_1,r_1)\times B_2(x_2,r_2)$ and for every
            $j$, $k = 1,2,\ldots,$
        \begin{eqnarray*}
         \lefteqn{\left\|P_{B(x_1,2^jr_1)^c\times B(x_,2^kr_2)^c}
        F\big(\sqrt{L_1},\sqrt{L_2}\big)P_{B_1(x_1, r_1)\times  B_2(x_2, r_2)}
         \right\|_{L^{p_1}(X_1; \, L^{p_2}( X_2))\to L^2(X_1\times X_2) }}\nonumber\hspace{-0.3cm}\\
         &\leq& C_{s,t}\prod_{i=1}^2
         \max\left\{
         {\big( R_ir_i \big)^{n_i} \over V_i(x_i, r_i)},
          {1\over {V_i(x_i,R_i^{-1})}}
          \right\}^{1/p_i-1/2}
         (2^jr_1 R_1)^{-s} (2^kr_2 R_2)^{-t}
        \big\|\delta_{(R_1,R_2)}F\big\|_{W^{(s+\varepsilon,\, t+\varepsilon), \infty} ({\mathbb R\times \mathbb R})};
        \hspace{-0.3cm}
        \end{eqnarray*}

        \item[(ii)] If the function $F$ is supported on
            $[-R_1,R_1]\times \RR$, then for each $s>0$ and for
            each $\varepsilon > 0$, there exists a constant
            $C_s$ such that for every ball  $B_1(x_1,r_1)$, and
            for every $j = 1,2,\ldots,$
        \begin{eqnarray*}
        \lefteqn{\left\|P_{B(x_1,2^jr_1)^c\times X_2}F\big(\sqrt{L_1},\sqrt{L_2}\big)P_{B_1(x_1, r_1)\times X_2}
        \right\|_{L^{p_1}(X_1; \, L^2(X_2))\to L^2(X_1\times X_2)}} \nonumber\\
         &\leq& C_s \max\left\{{\big( R_1r_1 \big)^{n_1} \over V_1(x_1, r_1)  }, {1\over {V_1(x_1,R_1^{-1})}}
         \right\}^{1/p_1-1/2}  (2^jr_1R_1)^{-s}
         \big\|\delta_{(R_1,1)}F\big\|_{{L^{\infty}_{\la_2}({\mathbb R}; \, W^{s+\varepsilon, \infty}_{\la_1}(\mathbb R))}};
        \end{eqnarray*}

        \item[(iii)]    If the function $F$ is supported on
            $\RR\times [-R_2,R_2]$, then for each $t>0$ and for
            each $\varepsilon>0$ there exists a constant $C_t$
            such that for every  ball $B_2(x_2,r_2)$ and for
            every $k = 1,2,\ldots,$
        \begin{eqnarray*}
         \lefteqn{\left\| P_{X_1\times B_2(x_2,2^kr_2)^c}F\big(\sqrt{L_1},\sqrt{L_2}\big)P_{X_1\times B_2(x_2, r_2)}
        \right\|_{L^{p_2}(X_2; \, L^2(X_1))\to L^2(X_1\times X_2)}} \nonumber\\
         &\leq& C_t \max\left\{{\big( R_2r_2 \big)^{n_2} \over V_2(x_2, r_2)  }, {1\over {V_2(x_2,R_2^{-1})}}
         \right\}^{1/p_2-1/2}  (2^kr_2
        R_2)^{-t}\big\|\delta_{(1,R_2)}F\big\|_{L^{\infty}_{\la_1}({\mathbb
        R}; \, W^{t+\varepsilon,\infty}_{\la_2}(\mathbb R))}.
        \end{eqnarray*}
    \end{itemize}
\end{prop}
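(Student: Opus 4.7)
The plan is to adapt the proof of Proposition~\ref{prop4.1} essentially line by line, with two parallel substitutions: (a)~Lemma~\ref{le4.1} is replaced throughout by Lemma~\ref{le4.2}, so that each dyadic operator-norm estimate is controlled by an $L^\infty$ norm in the spectral variable rather than an $L^2$ norm; (b)~the $L^2$-based Fourier estimates used to bound these pieces are replaced by $L^\infty$ analogues. The overall structure is preserved: consider the same four cases depending on whether $r_iR_i\ge 1$ or $r_iR_i<1$, introduce the identical Fourier cutoff $T_\psi$, use Lemma~\ref{le4.4}(iii) to replace $F(\sqrt{L_1},\sqrt{L_2})$ by $(F-T_\psi F)(\sqrt{L_1},\sqrt{L_2})$ on $B_1(x_1,2^jr_1)^c\times B_2(x_2,2^kr_2)^c$, and insert the spectral dyadic partition $1=\sum_{\ell,m\ge 0}(\delta_{1/R_1}\phi_\ell)(\lambda_1)(\delta_{1/R_2}\phi_m)(\lambda_2)$.

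Applying Lemma~\ref{le4.2}(i) to each dyadic summand (with $R_1,R_2$ scaled up by factors $2^{\ell+2},2^{m+2}$) reduces matters to controlling the $L^\infty$-analogue of~\eqref{e4.8}, namely
\[
M^{\infty}_{\phi,F,R_1,R_2}:=\sum_{\ell,m\ge 0}2^{\ell n_1(1/p_1-1/2)}\,2^{m n_2(1/p_2-1/2)}\bigl\|\phi_\ell(\lambda_1)\phi_m(\lambda_2)\,\delta_{(R_1,R_2)}[F-T_\psi F]\bigr\|_{L^\infty(\mathbb R^2)}.
\]
The $2^{-\ell/2-m/2}$ gains present in~\eqref{e4.8} are absent here because Lemma~\ref{le4.2} carries no interval-length factor. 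Splitting this as (I)+(II)+(III)+(IV) according to whether each of $\ell,m$ equals zero or is at least one, the estimates for (II), (III), (IV) transfer from the proof of Proposition~\ref{prop4.1} almost verbatim: the inequalities~\eqref{e4.12} and~\eqref{e4.13} are pointwise in $(\lambda_1,\lambda_2)$ and arise from writing $T_{\phi_0(\cdot/A)}$ as convolution against a Schwartz kernel, so replacing each $L^2_{\lambda_i}$ norm by the corresponding $L^\infty_{\lambda_i}$ norm and using the trivial embedding $\|g\|_{L^\infty}\le\|g\|_{W^{s,\infty}}$ yields the analogous bounds; one only has to take the decay exponents $N_1,N_2$ large enough to absorb the (now larger) factors $2^{\ell n_1(1/p_1-1/2)}$ and $2^{m n_2(1/p_2-1/2)}$ and then sum the resulting geometric series via the $\varepsilon$-room in the Sobolev exponents.

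The one step that genuinely requires new input is term (I), where Plancherel is unavailable. Instead, I would decompose
\[
(1-\psi_0)(\xi_1,\xi_2)=\sum_{\ell\ge\ell_0}\sum_{m\ge m_0}\widetilde{\phi}_\ell(\xi_1)\widetilde{\phi}_m(\xi_2)(1-\psi_0)(\xi_1,\xi_2)
\]
with $\ell_0=\log_2(2^{j-4}r_1R_1)$, $m_0=\log_2(2^{k-4}r_2R_2)$, and apply the tensor-product Bernstein-type estimate
\[
\bigl\|\mathcal F^{-1}\!\bigl[\widetilde{\phi}_\ell(\xi_1)\widetilde{\phi}_m(\xi_2)\widehat{g}\bigr]\bigr\|_{L^\infty(\mathbb R^2)}\lesssim 2^{-\ell(s+\varepsilon)}\,2^{-m(t+\varepsilon)}\,\|g\|_{W^{(s+\varepsilon,t+\varepsilon),\infty}(\mathbb R^2)}.
\]
This follows by factoring out $(1+\xi_1^2)^{-(s+\varepsilon)/2}(1+\xi_2^2)^{-(t+\varepsilon)/2}$ from the symbol: the residual tensor-product kernel is a product of one-variable Schwartz functions whose $L^1$-norms are $\lesssim 2^{-\ell(s+\varepsilon)}$ and $\lesssim 2^{-m(t+\varepsilon)}$ respectively, by the standard scaling argument (a smooth bump of spectral width $\sim 2^\ell$ and amplitude $\sim 2^{-\ell(s+\varepsilon)}$ has inverse Fourier transform of $L^1$-norm $\sim 2^{-\ell(s+\varepsilon)}$), and Young's inequality completes the bound. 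Summing over $\ell\ge\ell_0$ and $m\ge m_0$ then produces the required factor $(2^jr_1R_1)^{-s}(2^kr_2R_2)^{-t}$.

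The main obstacle is precisely this Bernstein-style step for term (I), since it is the only point at which the Plancherel argument of Proposition~\ref{prop4.1} cannot be mechanically transcribed; once this replacement is in hand, the remaining cases are purely bookkeeping. Items (ii) and (iii) are handled by the same procedure, invoking Lemma~\ref{le4.2}(ii), (iii) together with Lemma~\ref{le4.4}(i), (ii) and the one-dimensional version of the Bernstein inequality above.
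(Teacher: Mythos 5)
The paper omits the proof of Proposition~\ref{prop4.2}, asserting only that it follows from ``minor modifications'' of the proof of Proposition~\ref{prop4.1}, so there is no written argument to compare against line by line. Your proposal is a correct and natural fleshing-out of those modifications: Lemma~\ref{le4.2} replaces Lemma~\ref{le4.1} in the reduction, the kernel bounds~\eqref{e4.12} and~\eqref{e4.13} are pointwise and so transfer to $L^\infty$ norms essentially unchanged (modulo enlarging $N_1$, $N_2$ to absorb the slightly larger prefactors $2^{\ell n_1(1/p_1-1/2)}2^{m n_2(1/p_2-1/2)}$, which lack the $2^{-\ell/2-m/2}$ gain coming from $L^2$-rescaling), and the one substantive change---replacing the Plancherel step for term (I) by a Bernstein-type $L^1\text{--}L^\infty$ Young estimate on the dyadic pieces of the symbol $1-\psi_0$, with the kernel's $L^1$ norm bounded by $2^{-\ell(s+\varepsilon)}2^{-m(t+\varepsilon)}$ after factoring out the Bessel potential---is exactly the right ingredient, and the geometric sum over $\ell\ge\ell_0$, $m\ge m_0$ produces the required $(2^jr_1R_1)^{-s}(2^kr_2R_2)^{-t}$ decay.
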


\medskip

\section{Proofs of Theorems~\ref{th3.1} and~\ref{th3.2} }
\label{sec:proofofmainthms}
\setcounter{equation}{0}

The proof of Theorems~\ref{th3.1} and~\ref{th3.2}  is based on
a pair of lemmata. The following lemma is a standard result in
the theory of singular integrals on product spaces. It is a
version of \cite[Lemma 3.5]{DLY} adjusted to the setting of
spaces of homogeneous type.

\begin{lemma}\label{le5.1}
Fix  $N>\max\{n_1, n_2\}/4$. Assume that $T$ is a linear
operator, or a nonnegative sublinear operator, satisfying the
weak-type (2,2) bound
\begin{eqnarray*}
\left|\ \{x\in  {X_1\times X_2} :
|Tf(x)|>\eta\}\ \right|\leq
C_T\eta^{-2}\|f\|_{L^2( {X_1\times X_2} )}^2, \
\ {\text{for all}}~\eta>0,
\end{eqnarray*}
 and that for every $(H^1_{L_1, L_2}, 2, N)$-atom $a$, we have
\begin{eqnarray}\label{e5.1}
\|Ta\|_{L^1( {X_1\times X_2} )}\leq C
\end{eqnarray}
with constant $C$ independent of $a$. Then $T$ is bounded from
$\mathbb{H}^1_{L_1,L_2,at,N}( {X_1\times X_2} )$ to $L^1(
{X_1\times X_2} )$, and
$$
    \|Tf\|_{L^1( {X_1\times X_2} )}
    \leq C\|f\|_{\mathbb{H}^1_{L_1,L_2,at,N}(X_1\times X_2)}.
$$
Consequently, by density, $T$ extends to a bounded operator
from $H^1_{L_1,L_2,at,N}( {X_1\times X_2} )$ to $L^1(
{X_1\times X_2} )$.
\end{lemma}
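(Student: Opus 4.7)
The plan is to reduce the desired $\mathbb{H}^1 \to L^1$ bound to the per-representation estimate $\|Tf\|_{L^1} \le C \sum_j |\lambda_j|$ for every atomic $(H^1_{L_1,L_2},2,N)$-representation $f = \sum_j \lambda_j a_j$. Taking the infimum over representations in \eqref{e2.8} will then give $\|Tf\|_{L^1(X_1\times X_2)} \le C\|f\|_{\mathbb{H}^1_{L_1,L_2,at,N}}$. Since $H^1_{L_1,L_2,at,N}(X_1 \times X_2)$ is defined as the completion of $\mathbb{H}^1_{L_1,L_2,at,N}(X_1 \times X_2)$ with respect to this norm, a standard density argument (using completeness of $L^1$ and the continuity just established) will produce a unique continuous extension of $T$ to all of $H^1_{L_1,L_2,at,N}(X_1\times X_2)$.

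To establish the per-representation estimate, I would introduce the truncations $f_n := \sum_{j=1}^n \lambda_j a_j$, which converge to $f$ in $L^2(X_1 \times X_2)$ by definition of an atomic representation. By the (sub)linearity of $T$ and the uniform atom bound \eqref{e5.1},
\begin{equation*}
\|Tf_n\|_{L^1(X_1 \times X_2)} \le \sum_{j=1}^n |\lambda_j|\,\|T a_j\|_{L^1(X_1 \times X_2)} \le C \sum_{j=1}^n |\lambda_j|,
\end{equation*}
so $\{Tf_n\}$ is uniformly bounded in $L^1$. Next one shows $Tf_n \to Tf$ in measure. For linear $T$ this is immediate from the weak-$(2,2)$ hypothesis applied to $f - f_n \to 0$ in $L^2$. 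For a nonnegative sublinear $T$ one first establishes the pointwise reverse triangle inequality $|Tf - Tf_n| \le T(f - f_n)$ (which follows from $T(g+h)\le Tg+Th$ together with $T(-h) = Th$) and then applies the weak-$(2,2)$ bound to $T(f - f_n)$:
\begin{equation*}
\big|\{x \in X_1 \times X_2 : |T(f - f_n)(x)| > \eta\}\big| \le C_T \eta^{-2}\|f - f_n\|_{L^2(X_1 \times X_2)}^2 \longrightarrow 0.
\end{equation*}
Extracting a subsequence $\{Tf_{n_k}\}$ that converges a.e.\ to $Tf$ and invoking Fatou's lemma then yields $\|Tf\|_{L^1(X_1 \times X_2)} \le \liminf_{k\to\infty} \|Tf_{n_k}\|_{L^1} \le C \sum_j |\lambda_j|$, as required.

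The main technical subtlety lies in the sublinear case, where care is needed to justify the pointwise reverse triangle inequality and to pass from convergence in measure to the Fatou step; both become routine once one fixes the correct formulation of ``nonnegative sublinear'' (absolute homogeneity plus subadditivity, with nonnegative output). The hypothesis $N > \max\{n_1,n_2\}/4$ plays no direct role in the argument itself; it is inherited from \cite{CDLWY} to guarantee that $\mathbb{H}^1_{L_1,L_2,at,N}$ coincides (up to equivalence of norms) with $H^1_{L_1,L_2}(X_1\times X_2)$, which is what renders the conclusion useful for the main theorems. The argument is essentially the product-space analogue of \cite[Lemma 3.5]{DLY}, with the Euclidean setting replaced by the spaces-of-homogeneous-type framework of \cite{CDLWY}.
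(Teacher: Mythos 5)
Your proposal is correct and follows essentially the same route as the paper: the weak-$(2,2)$ hypothesis is applied to the $L^2$-convergent tails $f-f_n$ to pass to the limit, and the uniform atom bound \eqref{e5.1} is summed over the representation, followed by the standard density extension. The only cosmetic difference is that the paper deduces the pointwise a.e.\ domination $|Tf|\le\sum_j|\lambda_j|\,|Ta_j|$ directly (needing only the one-sided subadditivity estimate) and then integrates, whereas you argue via convergence in measure, a subsequence converging a.e., and Fatou's lemma; both are valid and essentially equivalent.
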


\begin{proof}
Let $f \in \mathbb{H}^1_{L_1,L_2,at,N}( {X_1\times X_2}) $,
where $f = \sum \lambda_j a_j$ is an atomic $(H^1_{L_1, L_2},
2, N)$-representation such that
$$
    \|f\|_{ \mathbb{H}^1_{L_1,L_2,at,N}( {X_1\times X_2})}
    \sim \sum_{j=0}^\infty |\lambda_j| .
$$
Since the sum converges in $L^2$ (by definition), and since $T$
is of weak type $(2,2)$, we have that at almost every point,
\begin{equation}\label{e5.2}
    |T(f)| \leq \sum_{j=0}^\infty |\lambda_j| \,|T(a_j)|.
\end{equation}
Indeed, for every $\eta >0$, we have that, if $f^K:= \sum_{j>K}
\lambda_j a_j$, then
\begin{eqnarray*}
\big|\ \{x: |Tf(x)| - \sum_{j=0}^\infty |\lambda_j| \,|T a_j(x)| >\eta\}\big|\,&\leq& \limsup_{K\to \infty}
\big|  \{x:  |Tf^K(x)|>\eta\}\big|\\
&\leq& \,C_T\,\,\eta^{-2} \,\limsup_{N\to \infty} \|f^K\|_2^2 =0,
\end{eqnarray*}
from which (\ref{e5.2}) follows.
In turn, (\ref{e5.1}) and (\ref{e5.2}) imply the desired $L^1$ bound for $Tf$.
\end{proof}

%
%
%
%
%
%

Now we recall Journ\'e's covering lemma (see \cite{J1,P}) in
the setting of spaces of homogeneous type. Let
$(X_i,d_i,\mu_i)$, $i = 1$, 2, be spaces of homogeneous type
and let $\{I_{\alpha_i}^{k_i}\subset X_i\}$, $i = 1$, 2, be the
same dyadic cubes as in Theorem~\ref{le5.2}.  The open set
$I_{\alpha_1}^{k_1} \times I_{\alpha_2}^{k_2}$ for $k_1$,
$k_2\in \mathbb{Z}$, $\alpha_1\in \Lambda_{k_1}$ and
$\alpha_2\in \Lambda_{k_2}$ is called a dyadic rectangle of
$X_1\times X_2$. Let $\Omega\subset X_1\times X_2$ be an open
set of finite measure. Denote by $m(\Omega)$ the maximal dyadic
subrectangles in $\Omega$. For $i = 1$, 2, denote by
$m_{i}(\Omega)$ the family of dyadic rectangles
$R\subset\Omega$ which are maximal in the $x_i$-direction. In
what follows, we denote by $R :=I_1\times I_2$ any dyadic
rectangle of $X_1\times X_2$. Given $R = I_1\times I_2\in
m_1(\Omega)$, let $I^{\ast}_2 $ be the largest dyadic cube
containing $I_2$ such that
\[
    V(I_1\times I^{\ast}_2\cap \Omega)
    > {1\over2 }V(I_1\times I^{\ast}_2).
\]
Similarly, given $R = I_1\times I_2\in m_2(\Omega)$, let
$I^{\ast}_1$ be the largest dyadic cube containing $I_1$ such
that
$$
V(I^{\ast}_1\times I_2 \cap \Omega)>{1\over2 }V(I^{\ast}_1\times I_2).
$$
The following lemma is proved in~\cite{HLL}.

\begin{lemma}[\cite{HLL}]\label{le5.3}
    Let all the notation be the same as above. Assume that
    $\Omega\subset X_1\times X_2$ is an open set with finite measure.
    Then for each $\delta > 0$, there exists a constant~$C$
    such that
    \begin{eqnarray}\label{e5.3}
        \sum_{R=I_1\times I_2\in m_1(\Omega)}V(R)
            \left({\ell(I_2)\over\ell(I^{\ast}_2)}\right)^{\delta}
        \leq CV(\Omega)
    \end{eqnarray}
    and
    \begin{eqnarray}\label{e5.4}
        \sum_{R=I_1\times I_2\in m_2(\Omega)}V(R)
            \left({\ell(I_1)\over\ell(I^{\ast}_1)}\right)^{\delta}
        \leq CV(\Omega).
    \end{eqnarray}
\end{lemma}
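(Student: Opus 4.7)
\textbf{Plan for the proof of Lemma~\ref{le5.3}.} It suffices to establish \eqref{e5.3}, since \eqref{e5.4} follows by symmetry (interchange the roles of the two factors). My plan is to adapt the strategy of Journ\'e~\cite{J1} and Pipher~\cite{P} to the dyadic grid furnished by Theorem~\ref{le5.2}. The natural first step is to stratify the collection~$m_1(\Omega)$ according to the number of dyadic generations separating $I_2$ from $I_2^\ast$: for each integer $j \geq 0$ set
\[
    m_1^{\,j}(\Omega)
    := \bigl\{ R = I_1 \times I_2 \in m_1(\Omega)
        : I_2^\ast \text{ is the $j$-th dyadic ancestor of } I_2\bigr\}.
\]
Because $\ell(I_2)/\ell(I_2^\ast) \sim \delta_0^{\,j}$ for the dyadic parameter~$\delta_0 \in (0,1)$ of Theorem~\ref{le5.2}, inequality~\eqref{e5.3} reduces to a single-generation bound of the form
\[
    \sum_{R \in m_1^{\,j}(\Omega)} V(R)
    \leq C\, 2^{-j\eta}\, V(\Omega)
    \qquad\text{for some fixed }\eta > 0,
\]
and then a geometric summation in~$j$ delivers~\eqref{e5.3} for all $\delta < \eta\log 2 /\log(1/\delta_0)$; iterating with intermediate values of $\eta$ produces the bound for every $\delta>0$.

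To prove the single-generation bound, fix~$j$ and a rectangle $R = I_1\times I_2 \in m_1^{\,j}(\Omega)$. By maximality of $I_2^\ast$ the next ancestor $\tilde I_2$ satisfies $V(I_1\times \tilde I_2 \cap \Omega^c) \geq V(I_1 \times \tilde I_2)/2$, so a Chebyshev-type split of the $x_1$-integration gives a subset of $I_1$ of $\mu_1$-measure comparable to $\mu_1(I_1)$ on which $\mu_2(\tilde I_2 \cap \Omega_{x_1}^c) \gtrsim \mu_2(\tilde I_2)$, where $\Omega_{x_1} := \{x_2: (x_1,x_2)\in\Omega\}$. Introducing the directional dyadic maximal function in the second variable,
\[
    M_2 f(x_1,x_2)
    := \sup_{J \ni x_2,\, J\text{ dyadic}}
        \frac{1}{\mu_2(J)}\int_J |f(x_1,y_2)|\, d\mu_2(y_2),
\]
one shows by iterating this one-step density estimate through all $j$ intermediate ancestors of $I_2$ (this is where both the disjointness of the cubes in $\{I_2 : I_1 \times I_2 \in m_1(\Omega)\}$ and the doubling of $\mu_2$ are used) that
\[
    I_1 \times I_2
    \subseteq \bigl\{ (x_1,x_2) : M_2(\chi_\Omega)(x_1,x_2) > 1 - c\,2^{-j\eta}\bigr\}.
\]
Fubini's theorem combined with the weak-type $(1,1)$ inequality for the one-parameter dyadic maximal function on $X_2$ then yields the desired bound $\sum_{R \in m_1^{\,j}(\Omega)} V(R) \leq C\, 2^{-j\eta} V(\Omega)$.

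The main obstacle is extracting the decay factor $2^{-j\eta}$: the definition of $I_2^\ast$ only gives a density threshold of~$1/2$ at the generation of $I_2^\ast$ itself, which does not, by itself, improve with~$j$. The decay must therefore be manufactured by bootstrapping the one-step estimate across the intermediate ancestors, an argument that relies critically on the small-boundary condition~(vi) of Theorem~\ref{le5.2}, since it substitutes for the precise Euclidean cube geometry used in~\cite{J1,P} and permits quasi-metric covering arguments in the present setting. This is the novel input of~\cite{HLL} in the space of homogeneous type framework, and I would simply quote it at that point rather than redo the geometric iteration.
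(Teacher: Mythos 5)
The paper gives no proof of this lemma at all; it cites \cite{HLL} directly and uses the statement as a black box. Since you end your sketch by saying you would ``simply quote [HLL] at that point,'' the practical content of your proposal is also just the citation. The issue is that the reduction you set up before that citation is not merely incomplete: taken at face value it would prove something false.

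The problem is the claimed single-generation bound $\sum_{R\in m_1^{j}(\Omega)} V(R) \leq C\,2^{-j\eta}\,V(\Omega)$ for some fixed $\eta>0$. If this held, summing the geometric series in $j$ would give the \emph{unweighted} estimate $\sum_{R\in m_1(\Omega)} V(R)\leq C\,V(\Omega)$, i.e.\ the conclusion of Lemma~\ref{le5.3} with $\delta=0$. But the $\delta$-power cannot be removed: the unweighted sum over maximal dyadic subrectangles can far exceed $V(\Omega)$, and this failure is precisely what makes Journ\'e's covering lemma necessary in the product theory. The density information furnished by the definition of $I_2^{\ast}$ is a fixed threshold of $1/2$ at the top generation; iterating through the $j$ intermediate ancestors does not tighten that threshold and cannot manufacture geometric decay in $j$ — the decay in \eqref{e5.3} comes entirely from the weight $(\ell(I_2)/\ell(I_2^{\ast}))^{\delta}$, not from any hidden smallness of $\sum_{R\in m_1^j(\Omega)}V(R)$. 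There is also a secondary gap: even if one had an inclusion $I_1\times I_2 \subset \{M_2\chi_\Omega > \alpha_j\}$, the Fubini plus weak-$(1,1)$ step would only control the measure of the union $\bigcup_{R\in m_1^j(\Omega)} R$, not the sum of the $V(R)$'s; the rectangles in $m_1^j(\Omega)$ overlap badly, and passing from a union estimate to a sum estimate is exactly where the careful bookkeeping of \cite{J1,P,HLL} lives and is not supplied by your sketch. The safe course is to do what the paper does and cite \cite{HLL} outright, rather than to interpose a reduction that, if it worked, would yield the (false) $\delta=0$ case.
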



\medskip

With these results in hand, we are ready to prove
Theorem~\ref{th3.1}.

\begin{proof}[Proof of Theorem~\ref{th3.1}]
Observe  that
$$
    \sup_{t_1, t_2>0}
        \big\| \eta_{(1, 2)}\delta_{(t_1, t_2)}F \big\|_{W^{(s_1, s_2), 2}
        ({\mathbb R}\times {\mathbb R})}
    \sim \sup_{t_1,t_2>0}
        \big\| \eta_{(1,2)}\delta_{(t_1, t_2)}G \big\|_{W^{(s_1,  s_2), 2}
        ({\mathbb R}\times {\mathbb R})},
$$
where $G(\lambda_1,\lambda_2) =
F\big(\sqrt{\lambda_1},\sqrt{\lambda_2})$. Hence we can replace
$F(L_1,L_2)$ by $F\big(\sqrt{L_1},\sqrt{L_2}\big)$ in the
proof.

To prove (i) of Theorem~\ref{th3.1}, by Lemma~\ref{le4.2} it
suffices to show that $F\big(\sqrt{L_1},\sqrt{L_2}\big)$ is
uniformly bounded on each $(H^1_{L_1, L_2}, 2, N)$-atom $a$
with $N > \max\{{n_1/4},{n_2/4}\}$, that is, there exists a
constant $C > 0$ independent of $a$ such that
\begin{eqnarray}\label{e5.5}
    \big\| F\big(\sqrt{L_1},\sqrt{L_2}\big)(a) \big\|_{L^1(X_1\times X_2)}
    \leq C.
\end{eqnarray}

From Definition~\ref{def2.5} of $(H^1_{L_1, L_2}, 2, N)$-atoms,
we have that the atom~$a$ is supported in an open set
$\Omega\subset X_1\times X_2$ with finite measure and $a$ can
be further decomposed into $ a = \sum_{R\in m(\Omega)} a_R$.
For each dyadic rectangle $R = I \times J\in m(\Omega)$, let $
I^* $ be the largest dyadic cube containing $I$ such that $ I^*
\times J\subset\widetilde{\Omega}$, where $\widetilde{\Omega}
:=\{x\in {X_1\times X_2} :\ M_s(\chi_{\Omega})(x)>1/2\}$. Here
$M_s$ denotes the strong maximal operator. Next, let $ J^* $ be
the largest dyadic cube containing $J$, so that $ I^* \times
J^* \subset\widetilde{\widetilde{\Omega}}$, where
$\widetilde{\widetilde{\Omega}}:=\{x\in  {X_1\times X_2} :\
M_s(\chi_{\widetilde{\Omega}})(x)>1/2\}$. Now let
$\widetilde{R}$ be the 100-fold dilate of $ I^* \times  J^* $,
as defined in Section~\ref{sec:Hardyatomic}. An application of
the strong maximal function theorem shows that
$V(\cup_{R\subset\Omega} \widetilde{R})\leq
CV(\widetilde{\widetilde{\Omega}})\leq
CV(\widetilde{\Omega})\leq CV(\Omega)$. From (iii) in the
definition of $(H^1_{L_1, L_2}, 2, N)$-atoms, we obtain that
\begin{eqnarray*}
    \big\| F\big(\sqrt{L_1},\sqrt{L_2}\big)a \big\|_{L^1(\cup\widetilde{R})}
    \leq V(\cup\widetilde{R})^{1/2}
        \big\| F\big(\sqrt{L_1},\sqrt{L_2}\big)a \big\|_{L^2( {X_1\times X_2} )}
    \leq C V(\Omega)^{1/2}\|a\|_{L^2( {X_1\times X_2} )}
    \leq C.
\end{eqnarray*}
Here the union $\cup\widetilde{R}$ is over all dyadic
rectangles $R = I \times J\in m(\Omega)$. Therefore, the proof
of (\ref{e5.5}) reduces to showing that
\begin{eqnarray}\label{e5.6}
    \int_{\big(\cup \widetilde{R}\big)^c}
    |F\big(\sqrt{L_1},\sqrt{L_2}\big)(a)(x)| \, d\mu(x)
    \leq C,
\end{eqnarray}
where $x = (x_1,x_2) \in X_1\times X_2$ and $\mu = \mu_1 \times
\mu_2$.  Since $a=\sum_{R\in m(\Omega)}a_R$, we can write
\begin{eqnarray}\label{e5.7}
    &&\int_{\big(\cup \widetilde{R}\big)^c}
        |F\big(\sqrt{L_1},\sqrt{L_2}\big) a (x)| \, d\mu(x)
    \leq \sum_{R\in m(\Omega) }
    \int_{\widetilde{R\,}^c}|F\big(\sqrt{L_1},\sqrt{L_2}\big) a_R (x)| \, d\mu(x)\nonumber\\
    &&\hskip.5cm \leq \left( \sum_{R\in m(\Omega) } \int_{(100 I^* )^c\times
    X_2}
     + \sum_{R\in m(\Omega) }\int_{X_1\times
    (100 J^* )^c}\right) |F\big(\sqrt{L_1},\sqrt{L_2}\big) a_R(x)| \, d\mu(x)\\
    &&\hskip.5cm =: D+E.\nonumber
\end{eqnarray}

Let us estimate the term $D$. One writes
\begin{eqnarray}\label{e5.8}\hspace{1cm}
    && \hspace{-1cm}\sum_{R\in m(\Omega) }\int_{(100 I^* )^c\times X_2}
    |F\big(\sqrt{L_1},\sqrt{L_2}\big)(a)(x)| \, d\mu(x) \\
     &=&
      \sum_{R\in m(\Omega) }\left(\int_{(100 I^* )^c\times 100J}
     +  \int_{(100 I^* )^c\times (100J)^c}\right)
        |F\big(\sqrt{L_1},\sqrt{L_2}\big)(a)(x)| \, d\mu(x)
     =:  D_1+D_2.\nonumber
\end{eqnarray}

\medskip

\noindent {\bf  {\underline{Estimate of $D_1$}}.}\  Choose
$M\in{\mathbb N}$ large enough that $M > s/2$. Recall that $R =
I\times J \in m(\Omega)$, and $\ell(I)$ denotes the diameter
of~$I$. Following (8.7) and (8.8) in~\cite{HM}, we write
\begin{eqnarray}
&&\hspace{-1.2cm}1\!\!1_1=2 \Big( \ell (I)^{-2}
    \int_{\ell (I)}^{\sqrt{2}\ell (I)}w\,dw\Big)   \cdot 1\!\!1_1\nonumber\\
&=&2\ell (I)^{-2} \int_{\ell (I)}^{\sqrt{2}\ell
(I)}w(1\!\!1_1-e^{-w^2L_1})^M\,dw
 +  \sum_{\kappa=1}^M c_{\kappa,M}  \ell (I)^{-2}
\int_{\ell(I)}^{\sqrt{2}\ell (I)} w e^{-\kappa w^2L_1} \,dw,
\label{e5.9}
\end{eqnarray}
where $c_{\kappa, M}\in \mathbb{R}$ are constants depending
only on $\kappa$ and~$M$. However, $\partial_we^{-\kappa
w^2L_1} = -2\kappa w L_1e^{-\kappa w^2L_1} $, and therefore
\begin{eqnarray}
2\kappa L_1\int_{\ell (I)}^{\sqrt{2}\ell (I)} w e^{-\kappa w^2L_1} \, dw
&=&e^{-\kappa \ell (I)^2L_1}-e^{-2\kappa \ell (I)^2L_1}
 \nonumber\\
&=&  e^{-\kappa \ell (I)^2L_1}
(1\!\!1_1-e^{-\ell (I)^2L_1})    \sum_{i=0}^{\kappa-1}e^{-i\ell (I)^2L_1}.
\label{e5.10}
\end{eqnarray}
Applying the procedure outlined in (\ref{e5.9})
and~(\ref{e5.10}) $M$ times, we have
 \begin{eqnarray}
1\!\!1_1
   &=&  2^M \left(\ell (I)^{-2}\int_{\ell (I)}^{\sqrt{2}\ell (I)}w(1\!\!1_1-e^{-w^2L_1})^M\, dw\right)^{M}    \nonumber\\
  &&+ \sum_{m=1}^{ M} \ell(I)^{-2m} \sum_{\kappa=1}^{(2M-1)m} c_{m, \kappa, M} e^{-\kappa\ell (I)^2L_1}
 \times \nonumber\\
 && \hspace{1.5cm} \times  (1\!\!1_1-e^{-\ell (I)^2L_1})^{m}
  \left(2\ell (I)^{-2}\int_{\ell (I)}^{\sqrt{2}\ell (I)}w(1\!\!1_1-e^{-w^2L_1})^M\, dw\right)^{M-m}
 L_1^{-m}.
 \label{e5.11}
 \end{eqnarray}
Recall that  $a_{R}=(L_1^M\otimes L_2^M ) b_R$. For every
$0\leq m_i\leq M$, $i = 1$, 2, we set
$$
    a_R^{(m_1, m_2)}(x)
    := (L_1^{M-m_1} \otimes L_2^{M-m_2}) b_R(x).
$$
With this notation, we apply~\eqref{e5.11} to obtain
 \begin{eqnarray}\label{e5.12}
 &&\hspace{-1cm}F\big(\sqrt{L_1},\sqrt{L_2}\big)a(x) \nonumber\\
  & =&  \sum_{m=0}^{ M-1}  \ell (I)^{-2m} G_{m,   M}(L_1)
\left(\ell (I)^{-2 }\int_{\ell (I)}^{\sqrt{2}\ell
(I)}w(1\!\!1_1-e^{-w^2L_1})^M \, dw\right)
F\big(\sqrt{L_1},\sqrt{L_2}\big) a_R^{(m, 0)}(x)  \nonumber\\
 &&+  \,  \ell (I)^{-2M}\sum_{\kappa=1}^{(2M-1)M}c_{M, \kappa, M}
 e^{-\kappa\ell (I)^2L_1} (1\!\!1_1-e^{-\ell (I)^2L_1})^{M}F\big(\sqrt{L_1},\sqrt{L_2}\big)a_R^{(M, 0)}(x),
  \end{eqnarray}
where $G_{0,   M}(L_1) := 1\!\!1_1$ and for  $m=1, \ldots,
M-1$,
$$
  G_{m,  M}(L_1) :=\sum_{\kappa=1}^{(2M-1)m}c_{m, \kappa, M} e^{-\kappa\ell (I)^2L_1}(1\!\!1_1 - e^{-\ell (I)^2L_1})^{m}
\left(2\ell (I)^{-2}\int_{\ell (I)}^{\sqrt{2}\ell (I)}w(1\!\!1_1 -
e^{-w^2L_1})^M\, dw\right)^{M-m-1}.
 $$
Substituting \eqref{e5.12} back into the term $D_1$, we   estimate the term $D_1$ by examining  $m$ in two cases:
$m=0,1, \ldots, M-1$ and   $m=M$.

\medskip

\noindent
{\bf Case (1) }\     $m=0,1, \ldots, M-1$. \

\smallskip

In this case, we want to estimate
\begin{eqnarray}\label{eddd}
     \int_{(100 I^* )^c\times 100J}  \left|G_{m,   M}(L_1)
    \left(\ell (I)^{-2}\int_{\ell (I)}^{\sqrt{2}\ell (I)}w(1\!\!1_1 - e^{-w^2L_1})^M
    F\big(\sqrt{L_1},\sqrt{L_2}\big)a^{(m, 0)}_R(x) \, dw\right)\right| \,d\mu(x)\nonumber\\
    \leq \ell (I)^{-2} \int_{\ell (I)}^{\sqrt{2}\ell (I)}
     w\left\|G_{m,   M}(L_1)   (1\!\!1_1 - e^{-w^2L_1})^M
    F\big(\sqrt{L_1},\sqrt{L_2}\big)a^{(m, 0)}_R\right\|_{L^1((100 I^* )^c\times 100J)} \, dw.
\end{eqnarray}
Recall that $\phi$ is a nonnegative $C_0^{\infty}$ function as
in \eqref{e3.1}.
Then for every $w\in [\ell(I), \sqrt{2}\ell (I)]$ and for all
$\lambda_1$, $\l_2 \geq 0$,
\begin{eqnarray*}
G_{m,   M}(\l_1^2) (1-e^{-w^2\l_1^2})^MF(\l_1,\l_2)&=&\sum_{\ell\in \mathbb{Z}}\phi(2^{-\ell}\l_1)
G_{m,   M}(\l_1^2)(1-e^{-w^2\l_1^2})^MF(\l_1,\l_2).
\end{eqnarray*}
Set
$$
F_{w,m, M}^{\ell}(\l_1,\l_2):=\phi(2^{-\ell}\l_1)G_{m,   M}(\l_1^2)(1-e^{-w^2\l_1^2})^MF(\l_1,\l_2).
$$
Therefore,
\begin{eqnarray}\label {e5.13}
&&\left\|G_{m,   M}(L_1)   (1\!\!1_1 - e^{-w^2L_1})^M
F\big(\sqrt{L_1},\sqrt{L_2}\big)a^{(m, 0)}_R\right\|_{L^1((100 I^* )^c\times 100J)}\nonumber\\
&&\hskip.5cm \leq \sum_{\ell\in \mathbb{Z}}
 \left\|F_{w,m, M}^{\ell}\big(\sqrt{L_1},\sqrt{L_2}\big)a^{(m, 0)}_R\right\|_{L^1((100 I^* )^c\times 100J)}\nonumber\\
&&\hskip.5cm \leq \sum_{\ell\in \mathbb{Z}}\sum_{j=6}^{\infty}\left(V(U_j(I^{\ast} )\times 100J) \right)^{1/2}\left\|F_{w,m, M}^{\ell}(L_1, L_2)
 a^{(m, 0)}_R\right\|_{L^2(U_j(I^{\ast} )\times 100J)}  \nonumber\\
&&\hskip.5cm  \leq \sum_{\ell\in \mathbb{Z}}\sum_{j=6}^{\infty}2^{jn_1/2}\left({V_1(I^{\ast})\over V_1(I)}\right)^{1/2} V(R)^{1/2}
 \left\|F_{w,m, M}^{\ell}(L_1, L_2)
a^{(m, 0)}_R\right\|_{L^2(U_j(I^{\ast} )\times 100J)}.
\end{eqnarray}
To go further, there are two cases to consider: $2^\ell
\ell(I)\geq 1$ and $2^\ell \ell(I)\leq 1$.

\medskip

\noindent
{\bf Subcase (1.1)}\ $2^\ell \ell(I)\geq 1$.

\smallskip

In this case, we note that $\support F_{w,m, M}^{\ell}\subseteq
[-2^{\ell + 2},2^{\ell + 2}]\times \RR$ and $\support a^{(m,
0)}_R\subseteq 10R$.  By (ii) of Proposition~\ref{prop4.1},
choosing $n_1/2 < s < 2M$, we have
\begin{eqnarray}\label {e5.14}
    && \big\| F_{w,m, M}^{\ell}(\SLi,\SLii) a^{(m, 0)}_R
        \big\|_{L^2(U_j(I^{\ast} )\times 100J)} \nonumber\\
    &&\hskip.5cm \leq \big\| P_{U_j(I^{\ast})\times X_2}
        F_{w,m, M}^{\ell}(\SLi,\SLii)P_{ 10I\times X_2}
        \big\|_{L^{p_1}(X_1; \, L^2(X_2))\to L^2(X_1\times X_2)}
        \ \big\| a^{(m, 0)}_R \big\|_{L^{p_1}(X_1; \, L^2(X_2))} \nonumber\\
    &&\hskip.5cm \leq C\left({(2^\ell \ell(I))^{n_1} \over V_1(x_I, \ell(I))}\right)^{1/p_1-1/2}
        (2^{j+\ell}\ell(I^{\ast}))^{-s}
        \ \big\| \delta_{(2^\ell, 1)}F_{w,m, M}^{\ell} \big\|_{{L^{\infty}_{\la_2}({\mathbb R};
        \, W^{s,2}_{\la_1}(\mathbb R))}}
        \ \big\| a^{(m, 0)}_R \big\|_{L^{p_1}(X_1; \,L^2(X_2))} \nonumber\\
    &&\hskip.5cm \leq C (2^\ell \ell(I))^{n_1(1/p_1-1/2)}(2^{j+\ell}\ell(I^{\ast}))^{-s}
        \ \big\| \delta_{(2^\ell, 1)}F_{w,m, M}^{\ell} \big\|_{{L^{\infty}_{\la_2}({\mathbb R};
        \, W^{s,2}_{\la_1}(\mathbb R))}}
        \ \big\| a^{(m, 0)}_R \big\|_{L^2(X_1\times X_2)}\nonumber\\
    &&\hskip.5cm \leq C 2^{-js}(2^\ell \ell(I))^{n_1(1/p_1-1/2)-s}
        \left({\ell(I)\over \ell(I^{\ast})}\right)^s
        \ \big\| \delta_{(2^\ell, 1)}F_{w,m, M}^{\ell} \big\|_{{L^{\infty}_{\la_2}
        ({\mathbb R}; \, W^{s,2}_{\la_1}(\mathbb R))}}
        \ \big\| a^{(m, 0)}_R \big\|_{L^2(X_1\times X_2)}.
\end{eqnarray}

\smallskip

\noindent
{\bf Subcase (1.2)} \     $2^\ell\ell(I)<1$.

By (ii) of Proposition~\ref{prop4.1}, choosing $n_1/2 < s <
2M$, we have
\begin{eqnarray}\label {e5.15}
    && \big\|F_{w,m, M}^{\ell}(\SLi,\SLii) a^{(m, 0)}_R
        \big\|_{L^2(U_j(I^{\ast})\times 100J)} \nonumber\\
    &&\hskip.5cm \leq \big\|P_{U_j(I^{\ast})\times X_2}F_{w,m, M}^{\ell}(\SLi,\SLii)P_{10I\times X_2}
        \big\|_{L^{p_1}(X_1; \, L^2(X_2))\to L^2(X_1\times X_2)} \
        \big\|a^{(m, 0)}_R\big\|_{L^{p_1}(X_1; \, L^2(X_2))}\nonumber\\
    &&\hskip.5cm \leq C\left({1 \over V_1(x_I, 2^{-\ell})}\right)^{1/p_1-1/2}(2^{j+\ell}(I^{\ast}))^{-s}
        \big\|\delta_{(2^\ell, 1)}F_{w,m, M}^{\ell}
        \big\|_{{L^{\infty}_{\la_2}({\mathbb R}; \, W^{s,2}_{\la_1}(\mathbb R))}} \
        \big\|a^{(m, 0)}_R\big\|_{L^{p_1}(X_1; \, L^2(X_2))}\nonumber\\
    &&\hskip.5cm \leq C\left({V_1(x_I,\ell(I)) \over V_1(x_I, 2^{-\ell})}\right)^{1/p_1-1/2}
        (2^{j+\ell}\ell(I^{\ast}))^{-s}
        \big\|\delta_{(2^\ell, 1)}F_{w,m, M}^{\ell}
        \big\|_{{L^{\infty}_{\la_2}({\mathbb R}; \, W^{s,2}_{\la_1}(\mathbb R))}} \
        \big\|a^{(m, 0)}_R\big\|_{L^2(X_1\times X_2)}\nonumber\\
    &&\hskip.5cm \leq C 2^{-js}(2^\ell \ell(I))^{-s}\left({\ell(I)\over \ell(I^{\ast})}\right)^s
        \big\|\delta_{(2^\ell, 1)}F_{w,m, M}^{\ell}
        \big\|_{{L^{\infty}_{\la_2}({\mathbb R}; \, W^{s, 2}_{\la_1}(\mathbb R))}} \
        \big\|a^{(m, 0)}_R\big\|_{L^2(X_1\times X_2)}.
\end{eqnarray}
On the other hand, we use the Sobolev embedding theorem to
obtain that for every $u\in [\ell(I), \sqrt{2}\ell (I)]$,
\begin{eqnarray}\label{eq:one-variablecondition}
    &&\big\|\delta_{(2^\ell, 1)}F_{w,m, M}^{\ell}
        \big\|_{{L^{\infty}_{\la_2}({\mathbb R}; \, W^{s, 2}_{\la_1}(\mathbb R))}}\nonumber\\
    &&\hskip.5cm = \sup_{\la_2}
    \left\|\left( I+{\partial^2\over \partial x^2_1}\right)^{s/2}
        \delta_{(2^\ell,1)}F_{w,m, M}^{\ell}(\cdot,\la_2)
        \right\|_{L^{2}_{\la_1}({\mathbb R})}\nonumber\\
    &&\hskip.5cm \leq \left\{\sup_{t_2>0} \big\|\eta_{(1,2)}\delta_{(2^\ell,t_2)}F
        \big\|_{W^{(s,1/2+\varepsilon), 2}({\mathbb R}\times {\mathbb R} )}
        + \big\|\eta_{1}\delta_{(2^\ell, 1)}F(\cdot,0)\big\|_{W^{s,2}({\mathbb R})}\right\}
        \min\left\{1, (2^{\ell}w)^{2M}\right\} \nonumber\\
    &&\hskip.5cm \leq C\min\left\{1, (2^{\ell}\ell(I))^{2M}\right\}.
\end{eqnarray}
Since we chose $M > s/2$, we have
$$
 \sum_{\ell\in {\mathbb Z}: 2^{\ell}\ell(I)\geq 1}  (2^\ell
\ell(I))^{n_1(1/p_1-1/2)-s} + \sum_{\ell\in {\mathbb Z}: 2^{\ell}\ell(I)\leq 1}
 (2^\ell
\ell(I))^{2M-s}  \leq C.
$$
This, in combination with
estimates~\eqref{e5.13}--\eqref{eq:one-variablecondition},
yields that the right-hand side of inequality~\eqref{e5.13} is
bounded by
\begin{eqnarray}\label {e5.166}
    \lefteqn{\sum_{j=1}^{\infty}  2^{-j(s-{n_1\over 2})}
        \left({\ell(I)\over \ell(I^{\ast})}\right)^{s - \frac{n_1}{2}} V(R)^{1/2} \
        \big\|a^{(m, 0)}_R\big\|_{L^2(X_1\times X_2)}} \hspace{1cm} \nonumber\\
    & \leq & C   \left({\ell(I)\over \ell(I^{\ast})}\right)^{s - \frac{n_1}{2}} V(R)^{1/2} \
        \big\|a^{(m, 0)}_R\big\|_{L^2(X_1\times X_2)}.
\end{eqnarray}

\medskip
\noindent
{\bf Case (2) }\     $m=M$. \

\smallskip

An argument similar to that in {\bf Case (1)} shows that
\begin{eqnarray*}
    && \int_{(100 I^* )^c\times 100J}
        \left|\sum_{\kappa=1}^{(2M-1)M} c_{M, \kappa, M}
        e^{-\kappa\ell (I)^2L_1} (1\!\!1_1-e^{-\ell (I)^2L_1})^{M}
        F\big(\sqrt{L_1},\sqrt{L_2}\big)
        a^{(M, 0)}_R\right| \,d\mu(x)\nonumber\\
    &&\hskip.5cm\leq C\sum_{\kappa=1}^{(2M-1)M}
        \left\|e^{-\kappa\ell (I)^2L_1} (1\!\!1_1-e^{-\ell (I)^2L_1})^{M}
        F\big(\sqrt{L_1},\sqrt{L_2}\big)
        a^{(M, 0)}_R\right\|_{L^1((100 I^* )^c\times 100J)}  \nonumber\\
    &&\hskip.5cm \leq C \left({\ell(I)\over \ell(I^{\ast})}\right)^{s - \frac{n_1}{2}} V(R)^{1/2} \
        \big\|a^{(M, 0)}_R\big\|_{L^2(X_1\times X_2)}.
\end{eqnarray*}
With formula~\eqref{e5.12} and the estimates in {\bf Cases (1)}
and {\bf (2)} in hand, we use the properties of $(H^1_{L_1,
L_2}, 2, N)$-atoms and Journ\'e's covering lemma to get
\begin{eqnarray}\label{ddd}
    D_1
    &\leq&  C  \sum_{m=0}^M\sum_{R\in m(\Omega)}
        \left({\ell(I)\over \ell(I^{\ast})}\right)^{s - \frac{n_1}{2}} V(R)^{1/2}
        \ell(I)^{-2m} \
        \big\|a^{(m, 0)}_R\big\|_2 \nonumber\\
    &\leq& C\sum_{m=0}^M \left(\sum_{R\in m(\Omega)}
        \left({\ell(I)\over \ell(I^{\ast})}\right)^{2s - n_1}V(R) \right)^{1/2}
        \left(\sum_{R\in m(\Omega)} \ell(I)^{-4m} \
        \big\|a^{(m, 0)}_R\big\|_2^2\right)^{1/2}  \nonumber\\
    &\leq & C V(\Omega)^{-1/2} V(\Omega)^{1/2}
    \leq   C
\end{eqnarray}
as desired.

\medskip

\noindent {\bf {\underline{Estimate of $D_2$}}.}\  Recall that
$a_R^{(m_1, m_2)}(x) = (L_1^{M-m_1} \otimes L_2^{M-m_2})
b_R(x)$ for each $m_1$, $m_2 \in \{0, 1, \ldots, M\}$. We use a
similar argument to that in \eqref{e5.11} and~\eqref{e5.12} to
obtain
\begin{align*}
    D_2
    &\leq \!\!\sum_{R\in m(\Omega)}
    \left\{\sum_{m_1=0}^{M-1}\sum_{m_2=0}^{M-1}\ell (I)^{-2(m_1+1)}\ell (J)^{-2(m_2+1)}\!\!\int_{\ell (I)}^{\sqrt{2}\ell (I)}
    \!\!\! \int_{\ell (J)}^{\sqrt{2}\ell (J)} \!\!\!\!w_1 w_2
    \left\|H_{m_1, m_2, w_1,w_2, M}\right\|_{L^1((100 I^* )^c\times (100J)^{c})}
      \,dw_1dw_2 \right.\\
     &\hspace{1.5cm} + \sum_{m_1=0}^{M-1}   \sum_{\kappa_2=1}^{(2M-1)M}\ell (I)^{-2(m_1+1)} \ell (J)^{-2M} \int_{\ell (I)}^{\sqrt{2}\ell (I)}
     w_1  \left\|U_{m_1, w_1,\kappa_2, M}\right\|_{L^1((100 I^* )^c\times (100J)^{c})} \, dw_1\\
     &\hspace{1.5cm} + \sum_{\kappa_1=1}^{(2M-1)M} \sum_{m_2=0}^{M-1}\ell (I)^{-2M}\ell (J)^{-2(m_2+1)}   \int_{\ell (J)}^{\sqrt{2}\ell (J)}
     w_2   \left\|V_{m_2, \kappa_1, w_2, M}\right\|_{L^1((100 I^* )^c\times (100J)^{c})}  \, dw_2\\
     &\hspace{1.5cm} +\left.  \sum_{\kappa_1=1}^{(2M-1)M} \sum_{\kappa_2=1}^{(2M-1)M}
     \ell (I)^{-2M}\ell (J)^{-2M}\left\|Z_{ \kappa_1, \kappa_2, M}\right\|_{L^1((100 I^* )^c\times (100J)^{c})} \right\} \\
     &=: D_{21}+D_{22}+D_{23}+D_{24},
\end{align*}
where
\begin{eqnarray*}
    H_{m_1, m_2, w_1, w_2, M}(x)
    &:=& G_{m_1,   M}(L_1)   (1\!\!1_1-e^{-w_1^2L_1})^M (1\!\!1_2-e^{-w_2^2L_2})^M
        F\big(\sqrt{L_1},\sqrt{L_2}\big)a^{(m_1, m_2)}_R(x),\nonumber\\ [4pt]
    U_{m_1, w_1,\kappa_2, M}(x) &:=&G_{m_1,   M}(L_1)e^{-\kappa_2\ell (J)^2L_2}
        (1\!\!1_1-e^{-w_1^2L_1})^M (1\!\!1_2-e^{-\ell (J)^2L_2})^{M}
        F\big(\sqrt{L_1},\sqrt{L_2}\big)a^{(m_1, M)}_R(x), \nonumber\\ [4pt]
    V_{m_2, \kappa_1, w_2, M}(x) &:=&G_{m_2,   M}(L_2)e^{-\kappa_1\ell (I)^2L_1}
        (1\!\!1_1-e^{-\ell(I)^2L_1})^M (1\!\!1_2-e^{-w_2^2L_2})^{M}
        F\big(\sqrt{L_1},\sqrt{L_2}\big)a^{(M, m_2)}_R(x),\nonumber\\ [4pt]
    Z_{ \kappa_1, \kappa_2, M}(x) &:=& e^{-\kappa_1\ell (I)^2L_1} e^{-\kappa_2\ell (J)^2L_2}
        (1\!\!1_1-e^{-\ell(I)^2L_1})^M (1\!\!1_2-e^{-\ell(J)^2L_2})^M
        F\big(\sqrt{L_1},\sqrt{L_2}\big)a^{(M, M)}_R(x).
\end{eqnarray*}
By (i) of Proposition~\ref{prop4.1}, a similar argument to that
for~$D_1$ shows
\begin{eqnarray*}
    D_{21}
    \leq  C\sum_{R\in m(\Omega)}\sum_{m_1=0}^{M-1}\sum_{m_2=0}^{M-1}\ell (I)^{-2m_1}\ell (J)^{-2m_2}
    \left({\ell(I)\over \ell(I^{\ast})}\right)^{s - \frac{n_1}{2}} V(R)^{1/2} \
    \hspace{1.5cm} \big\|a^{(m_1, m_2)}_R\big\|_{L^2(X_1\times X_2)}.
\end{eqnarray*}
A similar argument to the one above shows that
\begin{eqnarray*}
    && D_{21} + D_{23} + D_{24}\\
    &&\hskip.5cm  \leq C\sum_{R\in m(\Omega)}\left({\ell(I)\over \ell(I^{\ast})}\right)^{s - \frac{n_1}{2}} V(R)^{1/2}
        \left\{  \sum_{m_1=0}^{M-1} \ell (I)^{-2m_1}\ell (J)^{-2M}
        \big\|a^{(m_1, M)}_R\big\|_{L^2(X_1\times X_2)} \right.\\
    &&\hskip1cm  \left. \hspace{2cm} +\sum_{m_2=0}^{M-1} \ell (I)^{-2M}\ell (J)^{-2m_2}
        \big\|a^{(M, m_2)}_R\big\|_{L^2(X_1\times X_2)} +\ell (I)^{-2M}\ell (J)^{-2M}
        \big\|a^{(M, M)}_R\big\|_{L^2(X_1\times X_2)}  \right\}.
\end{eqnarray*}
From the estimates of $D_{21}$, $D_{22}$, $D_{23}$ and
$D_{24}$, H\"older's inequality, Journ\'e's covering lemma and
the properties of $(H^1_{L_1, L_2}, 2, N)$-atoms, we have
\begin{eqnarray}\label{e5.17}
    D_2
    \leq  C V(\Omega)^{-1/2} V(\Omega)^{1/2}
    \leq  C.
\end{eqnarray}
This, together with estimate~\eqref{ddd}, shows that $D\leq C$.
By a symmetric argument, we find that $E\leq C$, which
completes the proof of estimate~\eqref{e5.6}. This proves (i)
of Theorem~\ref{th3.1}.

We then apply Theorem~\ref{th2.7} to interpolate with the $L^2$
result to obtain that the operator $F(L_1, L_2)$ is bounded
from $H_{L_1, L_2}^p( {X_1\times X_2} )$ to $L^p( {X_1\times
X_2} )$ for $1<p\leq 2$. From Proposition~\ref{prop2.3} and by
duality, (ii) of Theorem~\ref{th3.1} follows readily. The proof
of Theorem~\ref{th3.1} is complete.
\end{proof}

\begin{proof}[Proof of Theorem~\ref{th3.2}]\
Using Proposition~\ref{prop4.2} in place of
Proposition~\ref{prop4.1}, the proof of Theorem~\ref{th3.2} is
similar to that of Theorem~\ref{th3.1} with minor
modifications. We omit the details.
\end{proof}

\begin{remark}\label{rem:removecondition}
We note that the one-variable Sobolev conditions~\eqref{e3.3}
and \eqref{e3.4} in Theorem~\ref{th3.1} and \eqref{e3.6} and
\eqref{e3.7} in Theorem~\ref{th3.2} are only used at one point
in the proofs (see details below), and indeed for certain
operators one or both of these one-variable Sobolev assumptions
can be dropped. Specifically, let $L_1$ and $L_2$ be operators
as in Theorem~\ref{th3.1}, and in addition suppose that either
0 is not in the spectrum of $L_1$ or 0 is in the absolutely
continuous spectrum of $L_1$. If $F$ is a bounded Borel
function satisfying the Sobolev conditions \eqref{e3.5}
and~\eqref{e3.3} of Theorem~\ref{th3.1}, then the conclusions
of Theorem~\ref{th3.1} hold; the second one-variable Sobolev
condition~\eqref{e3.4} is not necessary for such an~$F$.
Similarly, for the conclusions of Theorem~\ref{th3.2} to hold,
it suffices for $F$ to satisfy conditions \eqref{e3.8}
and~\eqref{e3.6}; condition~\eqref{e3.7} is not necessary.
Further, by symmetry, the corresponding statements apply if
either 0 is not in the spectrum of $L_2$ or 0 is in the
absolutely continuous spectrum of $L_2$.

We sketch the necessary modifications to the proofs, for $L_1$
and Theorem~\ref{th3.1}; the other cases are similar. If the
spectrum of $L_1$ does not include the point~$0$, or if 0
belongs to the absolutely continuous spectrum of $L_1$, then
the $L^2\to L^2$ operator bound of a spectral multiplier
operator can be controlled by the $L^\infty$ norm of the
multiplier function on the open interval $(0,\infty)$, not
including the point~$0$. Thus in the term $\|\delta_{(1,R_2)}
F\|_{L^{\infty}_{\la_1}({\mathbb R}; \, L^2_{\la_2}({\mathbb
R}))}$ in Lemma~\ref{le4.1}(iii), the $L_{\lambda_1}^\infty$
norm is taken on the open interval $(0,\infty)$, not including
the point~$0$. Therefore in estimate~\eqref{e4.4} in
Proposition~\ref{prop4.1}, the $L_{\lambda_1}^\infty$ norm of
the function $\delta_{(1,R_2)}F$ is taken on the open
interval~$(0,\infty)$, not including the point $0$. So in
estimate~\eqref{eq:one-variablecondition} in the proof of
Theorem~\ref{th3.1}, we can control $\big\|\phi\delta_{(2^\ell,
1)}F_{w,m, M}^{\ell} \big\|_{{L^{\infty}_{\la_2}({\mathbb R};
\, W^{s, 2}_{\la_1}(\mathbb R))}}$ simply by
$\sup_{t_2>0}\big\|\eta_{(1,2)}\delta_{(2^\ell,t_2)} F
\big\|_{W^{(s,1/2+\varepsilon), 2}({\mathbb R}\times {\mathbb
R} )} \cdot\min\left\{1, (2^{\ell}w)^{2M}\right\}$. We no
longer need the term $\big\|\eta_{1}\delta_{(2^\ell,
1)}F(\cdot,0)\big\|_{W^{s,2}({\mathbb R})}$.
\end{remark}

\medskip

\section{Applications}\label{sec:applications}
\setcounter{equation}{0}

As an illustration of our results, we discuss two applications.
Our main results, Theorems~\ref{th3.1} and \ref{th3.2} and
Corollary~\ref{coro3.3}, can be applied to second-order
operators (see for instance \cite{COSY, DOS}), including
standard Laplace operators, Schr\"odinger operators with
inverse-square potential, and sub-Laplacians on homogeneous
groups. Here we present applications to Riesz-transform-like
operators and to double Bochner--Riesz means.

%

\subsection{Riesz-transform-like
operators}\label{sec:Rieszlike} One motivation for
multivariable spectral multipliers comes from operators that
resemble the Riesz transform. For example, the operators
\begin{eqnarray}\label{e6.1}
  {L^{\alpha_1}_1L^{\alpha_2}_2\over (L_1 + L_2)^{\alpha}},
  \ \ \ \ \ \ \alpha = \alpha_1 + \alpha_2 \ \ {\rm with}\
  \alpha_1, \alpha_2 > 0
\end{eqnarray}
and
\begin{eqnarray}\label{e6.2}
   {\sqrt{L_1}\over \sqrt{L_1} + i L_2}
\end{eqnarray}
are known to be bounded on $L^2(X_1\times X_2)$.

Applying Theorem~\ref{th3.1}, we have the following result.

\begin{prop}\label{th6.1}
    Suppose that the metric measure spaces $X_i$, $i = 1$, 2,
    satisfy the doubling condition~\eqref{doubling} with
    exponent~$n_i$. Suppose that  $L_i$, $i = 1$, 2, satisfy the
    finite propagation speed property ${\rm (FS)}$ and restriction
    type estimates $({\rm{{ST}}}_{p_i,2}^{2})$ for some $p_i$ with
    $1\leq p_i<2$. Then
    \begin{itemize}
        \item[(i)] the operators in \eqref{e6.1} and
            \eqref{e6.2} extend to bounded operators from
            $H_{L_1, L_2}^1( {X_1\times X_2} )$ to $L^1(
            {X_1\times X_2} )$, and

        \item[(ii)] the operators in \eqref{e6.1} and
            \eqref{e6.2} are bounded on $L^p( {X_1\times X_2}
            )$ for all $p$ with $\max\{p_1, p_2\} < p \leq 2$.
    \end{itemize}
\end{prop}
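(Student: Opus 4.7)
The plan is to apply Theorem~\ref{th3.1} to each of the two bounded Borel multipliers
\[
    F_1(\lambda_1,\lambda_2):=\frac{\lambda_1^{\alpha_1}\lambda_2^{\alpha_2}}{(\lambda_1+\lambda_2)^{\alpha_1+\alpha_2}}
    \qquad\text{and}\qquad
    F_2(\lambda_1,\lambda_2):=\frac{\sqrt{\lambda_1}}{\sqrt{\lambda_1}+i\lambda_2},
\]
the boundedness following from $x^{\alpha_1}y^{\alpha_2}\leq 1$ (for $x+y=1$) in the first case and from $|\sqrt{\lambda_1}+i\lambda_2|\geq\sqrt{\lambda_1}$ in the second. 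Fixing $s_1>n_1/2$ and $s_2>n_2/2$, it then suffices to verify the three Sobolev conditions \eqref{e3.5}, \eqref{e3.3} and \eqref{e3.4}; Theorem~\ref{th3.1}(i) will then give the $H^1_{L_1,L_2}\to L^1$ estimate, and Theorem~\ref{th2.7} combined with Proposition~\ref{prop2.3} will yield the $L^p$ boundedness for $\max\{p_1,p_2\}<p\leq 2$.

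The one-variable conditions are immediate. For $F_1$, since $\alpha_1,\alpha_2>0$ one has $F_1(\lambda,0)=F_1(0,\lambda)=0$, so the suprema in \eqref{e3.3} and \eqref{e3.4} vanish. For $F_2$, the identity $F_2(0,\lambda)=0$ disposes of \eqref{e3.4} at once, while $F_2(\lambda,0)=1$ for $\lambda>0$ reduces $\eta_1(\lambda)\,\delta_{(t_1,1)}F_2(\lambda,0)$ to the fixed function $\phi(\lambda)$, whose $W^{s_1,2}(\RR)$-norm is obviously finite and independent of $t_1$.

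The main step is the mixed Sobolev condition \eqref{e3.5}. The crucial observation is that each $F_i$ enjoys a natural scaling symmetry: $F_1(c\lambda_1,c\lambda_2)=F_1(\lambda_1,\lambda_2)$ and $F_2(c^2\lambda_1,c\lambda_2)=F_2(\lambda_1,\lambda_2)$ for every $c>0$. Hence $\delta_{(t_1,t_2)}F_1=\delta_{(s,1)}F_1$ with $s:=t_1/t_2$, and $\delta_{(t_1,t_2)}F_2=\delta_{(\tau,1)}F_2$ with $\tau:=t_1/t_2^2$, so \eqref{e3.5} reduces in each case to the uniform one-parameter estimate
\[
    \sup_{s>0}\,\big\|\eta_{(1,2)}\,\delta_{(s,1)}F_i\big\|_{W^{(s_1,s_2),2}(\RR\times\RR)}<\infty,
    \qquad i=1,2.
\]
When the scaling parameter lies in any compact subinterval of $(0,\infty)$ this is straightforward: $\delta_{(s,1)}F_i$ is smooth on the compact rectangle $\support\phi\times\support\phi\subset(0,\infty)^2$ uniformly in the parameter, and the Sobolev norm is controlled by any sufficiently high integer $C^k$-norm.

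The main obstacle is therefore the behavior of the Sobolev norm as the scaling parameter tends to $0$ or to $\infty$. For $F_1$, each $\lambda_1$-derivative of $F_1(s\lambda_1,\lambda_2)$ brings out a factor of $s$, while the singular behavior $\partial_{\mu_1}^k F_1(\mu_1,\lambda_2)\sim\mu_1^{\alpha_1-k}$ near $\mu_1=0$ supplies the compensating power, giving a $C^k$-bound of order $O(s^{\alpha_1})$ as $s\to 0^+$; the symmetric estimate $O(s^{-\alpha_2})$ as $s\to+\infty$ follows from the identity $F_1(s\lambda_1,\lambda_2)=F_1(\lambda_1,\lambda_2/s)$. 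For $F_2$, writing $\delta_{(\tau,1)}F_2(\lambda_1,\lambda_2)=\sqrt{\tau\lambda_1}/(\sqrt{\tau\lambda_1}+i\lambda_2)$, one verifies by direct differentiation that $F_2$ and all its $\lambda$-derivatives on $\support\phi\times\support\phi$ are $O(\sqrt{\tau})$ as $\tau\to 0^+$, whereas $F_2\to 1$ with derivative decay $O(\tau^{-1/2})$ as $\tau\to+\infty$. In both cases the $W^{(s_1,s_2),2}$-norm is uniformly bounded, so \eqref{e3.5} holds and Theorem~\ref{th3.1} delivers the desired conclusions.
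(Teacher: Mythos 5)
Your proof is correct and follows the same route as the paper — apply Theorem~\ref{th3.1} after verifying the Sobolev conditions \eqref{e3.5}, \eqref{e3.3}, and~\eqref{e3.4} — but you actually carry out the verification, whereas the paper outsources it to a citation of paragraph 6.2.4 of Stein's book (and works there with the substituted multipliers $\lambda_1^{2\alpha_1}\lambda_2^{2\alpha_2}/(\lambda_1^2+\lambda_2^2)^\alpha$ and $\lambda_1/(\lambda_1+i\lambda_2^2)$, which is equivalent because $\eta_{(1,2)}$ is supported away from the origin, as the paper itself remarks at the start of the proof of Theorem~\ref{th3.1}). Your observation that the scaling invariances $F_1(c\lambda_1,c\lambda_2)=F_1(\lambda_1,\lambda_2)$ and $F_2(c^2\lambda_1,c\lambda_2)=F_2(\lambda_1,\lambda_2)$ collapse the two-parameter supremum in~\eqref{e3.5} to a one-parameter one is a clean and correct way to organize the computation, and the decay estimates as the scaling parameter tends to $0$ or $\infty$ (using $\alpha_1,\alpha_2>0$ for $F_1$ and $|\sqrt{\tau\lambda_1}+i\lambda_2|\geq|\lambda_2|\geq 1$ on the support of the cutoff for $F_2$) are sound.
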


\begin{proof}
It can be verified that the multipliers
$$
  {\lambda_1^{2\alpha_1}\lambda_2^{2\alpha_2} \over (\lambda_1^2 + \lambda_2^2)^{\alpha}},
  \ \ \ \ \ \ \alpha=\alpha_1+\alpha_2 \ \ {\rm with}\
  \alpha_1, \alpha_2>0
$$
and
$$
   { \lambda_1 \over  \lambda_1 +i \lambda_2^2}
$$
satisfy conditions \eqref{e3.5}, \eqref{e3.3} and~\eqref{e3.4}
(see paragraph~6.2.4 on page 110, \cite{St1}). The desired
results then follow readily.
\end{proof}

%

\subsection{Double Bochner--Riesz means}\label{sec:doubleBochnerRiesz}
Suppose that the metric measure spaces $X_i$, $i = 1$, 2,
satisfy the doubling condition~\eqref{doubling} with
exponent~$n_i$. Further assume that $L_i$, $i = 1$, 2, are
nonnegative self-adjoint operators satisfying the finite
propagation speed property~(FS).
Define
\begin{equation}\label{e6.3}
    S^{\delta}_{R_1, R_2}(\lambda_1, \lambda_2)
    :=
       \left\{
       \begin{array}{cl}
       \left(1 - \dfrac{\lambda_1^2}{R_1^2} - \dfrac{\lambda_2^2}{R_2^2}\right)^{\delta}
       &\mbox{for}\;\; \dfrac{\lambda_1^2}{R_1^2} + \dfrac{\lambda_2^2}{R_2^2} \le 1, \\ [14pt]
       0  &\mbox{for}\;\; \dfrac{\lambda_1^2}{R_1^2} + \dfrac{\lambda_2^2}{R_2^2} > 1. \\
       \end{array}
      \right.
   \end{equation}
We then define the operator $S^{\delta}_{R_1,
R_2}\big(\sqrt{L_1}, \sqrt{L_2}\big)$ using~\eqref{e1.1}. We
call $S^{\delta}_{R_1, R_2}\big(\sqrt{L_1}, \sqrt{L_2}\big)$
the \emph{double Bochner--Riesz mean of order~$\delta$}. The
basic question in the theory of Bochner--Riesz means is to
establish the critical exponent for uniform continuity with
respect to $R_1, R_2$ and convergence of the Riesz means on
$L^p( {X_1\times X_2} )$ spaces for various~$p$ with $1\le p
\le \infty$.

For ${\delta} = 0$, $S^{0}_{R_1, R_2}\big(\sqrt{L_1},
\sqrt{L_2}\big)$ is the spectral projector $E_{\sqrt{L_1},
\sqrt{L_2}}([0, R_1]\times [0, R_2])$, while for ${\delta}>0$,
$S^{\delta}_{R_1, R_2}\big(\sqrt{L_1}, \sqrt{L_2}\big)$ can be
seen as a smoothed version of this spectral projector.
Bochner--Riesz summability describes the range of ${\delta}$
for which the above operators are bounded on $L^p( {X_1\times
X_2} )$,  uniformly in $R_1, R_2$. Note that if $0<s<{\delta}+
1/2$, then $ S^{\delta}_{R_1, R_2}(\lambda_1, \lambda_2) \in
W^{s,2}(\mathbb{R}^2)$; see~\cite[Lemma~4.4]{BGSY}. Also,
$W^{s,2}(\RR^2) \hookrightarrow W^{(s_1,s_2),2}(\RR\times\RR)$
for $s_1 + s_2 = s$. As a consequence of Theorem~\ref{th3.1}
and Proposition~\ref{prop2.3}, we obtain the following result.

\begin{prop}\label{th6.2}
    Suppose that the metric measure spaces $X_i$, $i = 1$, 2,
    satisfy the doubling condition~\eqref{doubling} with
    exponent~$n_i$. Further assume that $L_i$, $i = 1$, 2, are
    nonnegative self-adjoint operators satisfying the finite
    propagation speed property~${\rm (FS)}$ and restriction
    type estimates~$({\rm{{ST}}}_{p_i,2}^{2})$ for some $p_i$
    with $1\leq p_i<2$. Then for all $\delta>(n_1+n_2)/2 -1/2$,
    \begin{itemize}
        \item[(i)] the operator $S^{\delta}_{R_1,
            R_2}\big(\sqrt{L_1}, \sqrt{L_2}\big)$  extends  to
            a bounded operator  from $H_{L_1, L_2}^1(
            {X_1\times X_2} )$ to $L^1( {X_1\times X_2} )$, and

        \item[(ii)]    $S^{\delta}_{R_1,
            R_2}\big(\sqrt{L_1}, \sqrt{L_2}\big)$ is
            bounded on $L^p( {X_1\times X_2} ) $  for all
            $p_i$ with $\max\{p_1, p_2\}<p\leq 2$. In
            addition,
            \begin{eqnarray*}
                \sup_{R_1, R_2>0} \big\|S^{\delta}_{R_1, R_2}
                    \big(\sqrt{L_1}, \sqrt{L_2}\big)f
                    \big\|_{L^p( {X_1\times X_2} )}
                \leq C\|f\|_{L^p( {X_1\times X_2} )}.
            \end{eqnarray*}
    \end{itemize}
\end{prop}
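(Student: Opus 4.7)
The plan is to invoke Theorem~\ref{th3.1} directly with $F(\lambda_1,\lambda_2)=S^{\delta}_{R_1,R_2}(\lambda_1,\lambda_2)$. Once the mixed Sobolev condition \eqref{e3.5} and the two one-variable conditions \eqref{e3.3}, \eqref{e3.4} are verified \emph{uniformly in $R_1,R_2$} with indices $s_1>n_1/2$, $s_2>n_2/2$, part~(i) follows immediately, and part~(ii) (together with the stated uniformity) is a consequence of Theorem~\ref{th3.1}(ii), the interpolation theorem (Theorem~\ref{th2.7}), Proposition~\ref{prop2.3} and duality.

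To check \eqref{e3.5} I would first use the dilation identity $S^{\delta}_{R_1,R_2}(t_1\lambda_1,t_2\lambda_2)=S^{\delta}_{R_1/t_1,\,R_2/t_2}(\lambda_1,\lambda_2)$ to rewrite the supremum over $(t_1,t_2)$ as a supremum over the rescaled parameters $(\tau_1,\tau_2):=(R_1/t_1,R_2/t_2)$. Since $\delta>(n_1+n_2)/2-1/2$, I can pick $s_1>n_1/2$ and $s_2>n_2/2$ with $s:=s_1+s_2<\delta+1/2$, and the embedding $W^{s,2}(\RR^2)\hookrightarrow W^{(s_1,s_2),2}(\RR\times\RR)$ reduces the claim to
\[
\sup_{\tau_1,\tau_2>0}\bigl\|\eta(\lambda_1)\eta(\lambda_2)S^{\delta}_{\tau_1,\tau_2}\bigr\|_{W^{s,2}(\RR^2)}<\infty.
\]
This uniform bound I would establish by a three-regime analysis: if $\min(\tau_1,\tau_2)\leq 1/4$ the product vanishes on the support $\eta\otimes\eta\subset(1,4)^2$ since $\lambda_i^2/\tau_i^2>1$ there; if $\min(\tau_1,\tau_2)\geq 8$ then $\lambda_1^2/\tau_1^2+\lambda_2^2/\tau_2^2\leq 1/2$ on the support, so $S^{\delta}_{\tau_1,\tau_2}$ is $C^{\infty}$ with derivatives dominated uniformly by those of $(1-z)^{\delta}$ on $|z|\leq 1/2$; in the intermediate compact range Lemma~4.4 of~\cite{BGSY} gives $S^{\delta}_{\tau_1,\tau_2}\in W^{s,2}(\RR^2)$, and a continuity-plus-compactness argument (passing to a one-point compactification to handle $\tau_i\to\infty$) delivers the uniform bound.

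For the one-variable conditions, the restricted multiplier $F(t_1\lambda_1,0)=(1-t_1^2\lambda_1^2/R_1^2)_+^{\delta}$ is a one-dimensional Bochner--Riesz profile, and the same dilation trick plus the one-dimensional case of Lemma~4.4 of~\cite{BGSY} (valid for $s_1<\delta+1/2$) yields $\sup_{t_1>0}\|\eta_1\delta_{(t_1,1)}F(\cdot,0)\|_{W^{s_1,2}(\RR)}<\infty$; compatibility with the required $s_1>n_1/2$ is guaranteed by $\delta>(n_1+n_2)/2-1/2\geq (n_1-1)/2$, and condition \eqref{e3.4} is symmetric. The main obstacle is the intermediate regime in the proof of \eqref{e3.5}, where the boundary curve $\lambda_1^2/\tau_1^2+\lambda_2^2/\tau_2^2=1$ genuinely crosses the support of $\eta\otimes\eta$ and smoothness arguments fail, forcing essential use of Lemma~4.4 of~\cite{BGSY} together with a careful continuity argument in $(\tau_1,\tau_2)$ to upgrade the pointwise-in-$\tau$ Sobolev membership to a bound that is uniform in $\tau_1,\tau_2$, and hence in $R_1,R_2$.
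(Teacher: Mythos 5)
Your proposal follows essentially the same route as the paper: reduce to Theorem~\ref{th3.1}, note the dilation identity $\delta_{(t_1,t_2)}S^{\delta}_{R_1,R_2}=S^{\delta}_{R_1/t_1,R_2/t_2}$, invoke Lemma~4.4 of \cite{BGSY} to place the Bochner--Riesz profile in $W^{s,2}(\mathbb{R}^2)$ for $s<\delta+1/2$, and pass to the mixed Sobolev space via $W^{s,2}(\mathbb{R}^2)\hookrightarrow W^{(s_1,s_2),2}(\mathbb{R}\times\mathbb{R})$ with $s_1+s_2=s$ and $s_i>n_i/2$. The paper gives only this bare chain of citations, whereas you additionally spell out the three-regime analysis and the compactification argument needed to make the uniformity in $(\tau_1,\tau_2)$ (and hence in $R_1,R_2$) explicit, together with the one-variable conditions; these are details the paper leaves implicit, and your treatment of them is consistent with the intended proof.
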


\bigskip

{\bf Acknowledgments.} P.~Chen, X.T.~Duong, J.~Li and L.A.~Ward
are supported by the Australian Research Council (ARC) under
Grant No.~ARC-DP120100399. J.~Li is also supported by the NNSF
of China, Grant No.~11001275. L.X.~Yan is supported by the NNSF
of China, Grant Nos.~10925106 and~11371378. Part of this work
was done during L.X.~Yan's stay at Macquarie University and
visit to the University of South Australia. L.X.~Yan would like
to thank Macquarie University and the University of South
Australia for their hospitality.

\bigskip

\end{document}